\def\ge{\geqslant}
\def\le{\leqslant}
\def\a{\alpha}
\def\G{\Gamma}
\def\D{\Delta}
\def\s{\sigma}
\def\t{\tau}
\def\th{\theta}
\def\k{\kappa}
\def\l{\lambda}
\def\z{\zeta}
\def\i{^{-1}}
\def\<{\langle}
\def\>{\rangle}
\newcommand{\fkg}{\ensuremath{\mathfrak{g}}\xspace}
\newcommand{\fkt}{\ensuremath{\mathfrak{t}}\xspace}
\newcommand{\fkL}{\ensuremath{\mathfrak{L}}\xspace}
\newcommand{\BA}{\ensuremath{\mathbb {A}}\xspace}
\newcommand{\BF}{\ensuremath{\mathbb {F}}\xspace}
\newcommand{{\BG}}{\ensuremath{\mathbb {G}}\xspace}
\newcommand{\BH}{\ensuremath{\mathbb {H}}\xspace}
\newcommand{{\BK}}{\ensuremath{\mathbb {K}}\xspace}
\newcommand{\BL}{\ensuremath{\mathbb {L}}\xspace}
\newcommand{\BM}{\ensuremath{\mathbb {M}}\xspace}
\newcommand{\BQ}{\ensuremath{\mathbb {Q}}\xspace}
\newcommand{\BT}{\ensuremath{\mathbb {T}}\xspace}
\newcommand{\BU}{\ensuremath{\mathbb {U}}\xspace}
\newcommand{\BW}{\ensuremath{\mathbb {W}}\xspace}
\newcommand{\BX}{\ensuremath{\mathbb {X}}\xspace}
\newcommand{\BZ}{\ensuremath{\mathbb {Z}}\xspace}
\newcommand{\CA}{\ensuremath{\mathcal {A}}\xspace}
\newcommand{\CE}{\ensuremath{\mathcal {E}}\xspace}
\newcommand{\CF}{\ensuremath{\mathcal {F}}\xspace}
\newcommand{\CG}{\ensuremath{\mathcal {G}}\xspace}
\newcommand{\CH}{\ensuremath{\mathcal {H}}\xspace}
\newcommand{\CL}{\ensuremath{\mathcal {L}}\xspace}
\newcommand{\CO}{\ensuremath{\mathcal {O}}\xspace}
\newcommand{\CT}{\ensuremath{\mathcal {T}}\xspace}
\newcommand{\CU}{\ensuremath{\mathcal {U}}\xspace}
\newcommand{\CX}{\ensuremath{\mathcal {X}}\xspace}
\newcommand{\Ad}{{\mathrm{Ad}}}
\DeclareMathOperator{\charac}{char}
\newcommand{\GL}{\mathrm{GL}}
\DeclareMathOperator{\Hom}{Hom}
\newcommand{\id}{\ensuremath{\mathrm{id}}\xspace}
\DeclareMathOperator{\Nm}{Nm}
\DeclareMathOperator{\ord}{ord}
\newcommand{\red}{\ensuremath{\mathrm{red}}\xspace}
\DeclareMathOperator{\Spec}{Spec}
\DeclareMathOperator{\tr}{tr}
\def\pr{{\rm pr}}
\def\tPhi{\widetilde \Phi}
\def\tD{\widetilde \Delta}
\def\ind{{\rm ind}}
\def\Nm{{\rm Nm}}
\def\aff{{\rm aff}}
\def\bx{{\mathbf x}}
\def\brG{\breve G}
\def\brU{\breve U}
\newtheorem{theorem}{Theorem}
\newtheorem{proposition}[theorem]{Proposition}
\newtheorem{lemma}[theorem]{Lemma}
\newtheorem {conjecture}[theorem]{Conjecture}
\newtheorem{corollary}[theorem]{Corollary}
\theoremstyle{definition}
\newtheorem{definition}[theorem]{Definition}
\newtheorem{condition}[theorem]{Condition}
\newtheorem{example}[theorem]{Example}
\newtheorem{remark}[theorem]{Remark}
\numberwithin{equation}{section}
\numberwithin{theorem}{section}
\renewcommand{\to}{%
   \ifbool{@display}{\longrightarrow}{\rightarrow}%
   }
\let\shortmapsto\mapsto
\renewcommand{\mapsto}{%
   \ifbool{@display}{\longmapsto}{\shortmapsto}%
   }
\newlength{\olen}
\newlength{\ulen}
\newlength{\xlen}
\newcommand{\xra}[2][]{%
   \ifbool{@display}%
      {\settowidth{\olen}{$\overset{#2}{\longrightarrow}$}%
       \settowidth{\ulen}{$\underset{#1}{\longrightarrow}$}%
       \settowidth{\xlen}{$\xrightarrow[#1]{#2}$}%
       \ifdimgreater{\olen}{\xlen}%
          {\underset{#1}{\overset{#2}{\longrightarrow}}}%
          {\ifdimgreater{\ulen}{\xlen}%
             {\underset{#1}{\overset{#2}{\longrightarrow}}}
             {\xrightarrow[#1]{#2}}}}%
      {\xrightarrow[#1]{#2}}
   }
\newcommand{\xyra}[2][]{%
   \settowidth{\xlen}{$\xrightarrow[#1]{#2}$}%
   \ifbool{@display}%
      {\settowidth{\olen}{$\overset{#2}{\longrightarrow}$}%
       \settowidth{\ulen}{$\underset{#1}{\longrightarrow}$}%
       \ifdimgreater{\olen}{\xlen}%
          {\mathrel{\xymatrix@M=.12ex@C=3.2ex{\ar[r]^-{#2}_-{#1} &}}}%
          {\ifdimgreater{\ulen}{\xlen}%
             {\mathrel{\xymatrix@M=.12ex@C=3.2ex{\ar[r]^-{#2}_-{#1} &}}}
             {\mathrel{\xymatrix@M=.12ex@C=\the\xlen{\ar[r]^-{#2}_-{#1} &}}}}}%
      {\mathrel{\xymatrix@M=.12ex@C=\the\xlen{\ar[r]^-{#2}_-{#1} &}}}%
   }
\newcommand{\xla}[2][]{%
   \ifbool{@display}%
      {\settowidth{\olen}{$\overset{#2}{\longleftarrow}$}%
       \settowidth{\ulen}{$\underset{#1}{\longleftarrow}$}%
       \settowidth{\xlen}{$\xleftarrow[#1]{#2}$}%
       \ifdimgreater{\olen}{\xlen}%
          {\underset{#1}{\overset{#2}{\longleftarrow}}}%
          {\ifdimgreater{\ulen}{\xlen}%
             {\underset{#1}{\overset{#2}{\longleftarrow}}}
             {\xleftarrow[#1]{#2}}}}%
      {\xleftarrow[#1]{#2}}
   }
\newcommand{\isoarrow}{%
   \ifbool{@display}{\overset{\sim}{\longrightarrow}}{\xrightarrow\sim}%
   }
\newcommand{\sm}{{\,\smallsetminus\,}}
\newcommand\cool{\overline{\mathbb{Q}}_\ell}
\newcommand\rar{ \rightarrow }
\newcommand\tar{ \twoheadrightarrow }
\newcommand{\obF}{\overline{\BF}_q}
\newcommand{\bfx}{{\mathbf x}}
\newcommand{\colim@}[2]{%
  \vtop{\m@th\ialign{##\cr
    \hfil$#1\operator@font lim$\hfil\cr
    \noalign{\nointerlineskip\kern1.5\ex@}#2\cr
    \noalign{\nointerlineskip\kern-\ex@}\cr}}%
}
\newcommand{\colim}{%
  \mathop{\mathpalette\colim@{\rightarrowfill@\textstyle}}\nmlimits@
}
\newcommand{\prolim@}[2]{%
  \vtop{\m@th\ialign{##\cr
    \hfil$#1\operator@font lim$\hfil\cr
    \noalign{\nointerlineskip\kern1.5\ex@}#2\cr
    \noalign{\nointerlineskip\kern-\ex@}\cr}}%
}
\newcommand{\prolim}{%
  \mathop{\mathpalette\colim@{\leftarrowfill@\textstyle}}\nmlimits@
}
\begin{document}

\title{The cohomology of $p$-adic Deligne-Luszitg schemes of Coxeter type}

\author{Alexander B. Ivanov}
\address{Fakult\"at f\"ur Mathematik, Ruhr-Universit\"at Bochum, D-44780 Bochum, Germany.}
\email{a.ivanov@rub.de}

\author{Sian Nie}
\address{Academy of Mathematics and Systems Science, Chinese Academy of Sciences, Beijing 100190, China}

\address{ School of Mathematical Sciences, University of Chinese Academy of Sciences, Chinese Academy of Sciences, Beijing 100049, China}
\email{niesian@amss.ac.cn}

\begin{abstract}
We determine the cohomology of the closed Drinfeld stratum of $p$-Deligne--Lusztig schemes of Coxeter type attached to arbitrary inner forms of unramified groups over a local non-archimedean field. We prove that the corresponding torus weight spaces are supported in exactly one cohomological degree, and are pairwisely non-isomorphic irreducible representations of the pro-unipotent radical of the corresponding parahoric subgroup. We also prove that all Moy--Prasad quotients of this stratum are maximal varieties, and we investigate the relation between the resulting representations and Kirillov's orbit method.
\end{abstract}
\maketitle

\section{Introduction}

Let $k$ be a non-archimedean local field with residue characteristic $p>0$ and residue field $\BF_q$. Let $\breve k$ be the completion of the maximal unramified extension of $k$ and let $F$ denote the Frobenius automorphism of $\breve k$ over $k$. Let $G$ be a reductive group over $k$, which splits over $\breve k$. Let $T \subseteq B$ be a maximal torus and a Borel subgroup of $G$, such that $T$ splits and $B$ becomes rational over $\breve k$. Let $U$ resp. $\overline U$ denote the unipotent radical of $B$ resp. of the opposed Borel subgroup. To $G,T,U$ one can attach the space 
\begin{equation}\label{eq:XTU}
X_{T,U} = \{g \in G(\breve k) \colon g^{-1}F(g) \in \overline U \cap FU \},
\end{equation}
which is a variant of the $p$-adic Deligne--Lusztig spaces from \cite{Ivanov_DL_indrep}. Then $X_{T,U}$ has the structure of an ind-(perfect scheme) over $\overline\BF_q$.
Moreover, $X_{T,U}$ is endowed with an action of the locally compact group $G(k) \times T(k)$, so that its $\ell$-adic cohomology realizes smooth $G(k)$-representations, parametrized by smooth characters of $T(k)$, very much in the style of Deligne--Lusztig theory \cite{DeligneL_76}. Recently, the $\ell$-adic cohomology of these and closely related spaces was extensively studied (especially when $T$ is elliptic) and related with the local Langlands correspondences. See, for example, \cite{CI_loopGLn,ChanO_21} for the relation with the type-theoretic construction of J.-K. Yu \cite{Yu_01} and the related work of Kaletha and others (see e.g. \cite{Kaletha_19}). On the other hand, see \cite[\S9]{CI_loopGLn}, \cite{Feng_24} for relations with Fargues--Scholze's and Zhu's geometric local Langlands \cite{FarguesScholze,Zhu_20}. In this article we continue the study of geometry and cohomology of $X_{T,U}$.

\smallskip

Assume that $(T,U)$ is a Coxeter pair (see \S\ref{sec:Coxeter_pairs}). In particular, $T$ is elliptic and the apartment of $T$ in the reduced affine building of $G$ over $k$ consists of one point. Bruhat--Tits theory attaches to this point a parahoric model $\CG$ of $G$ over the integers $\CO_k \subseteq k$ with connected special fiber. Let $\CO$ denote the integers of $\breve k$. It was shown in  \cite{Ivanov_Cox_orbits, Nie_23} that $X_{T,U} \cong \coprod_{G(k)/\CG(\CO_k)} g X$, where
\begin{equation}\label{eq:X}
X = \{ g \in \CG(\CO) \colon g^{-1}F(g) \in (\overline\CU \cap F\CU)(\CO)\}
\end{equation}
is a perfect affine $\obF$-scheme with $\CG(\CO_k) \times \CT(\CO_k)$-action, and where we denote by $\CT,\CU \subseteq \CG$ the closures of $T,U$. Cohomology of $X_{T,U}$ is then obtained by compactly inducing that of $X$. 

\smallskip

There is a fibration $X \rar X_{0+}$ over a Deligne--Lusztig variety $X_{0+}$ of the reductive quotient $\BG_{0+} = (\CG \otimes_{\CO_k} \BF_q)_{\rm red}$ of the special fiber of $\CG$. The variety $X_{0+}$ admits a natural stratification by locally closed subschemes. The stratification of $X$ obtained by pulling it back was first considered in \cite{CI_DrinfeldStrat} (for $\GL_n$ and inner forms) resp. in \cite[\S6.2]{ChanO_21} (in general) and called the Drinfeld stratification there. There is a (in full generality only conjectural) relation between the cohomologies of $X$ and of the strata, see \cite[Theorem 5.1]{CI_loopGLn}, \cite[Conjecture 7.2.1]{CI_DrinfeldStrat}, \cite[Conjecture 6.5]{ChanO_21}

\smallskip

The cohomology of the unique closed stratum is very interesting, and seems to be the most accessible one. When $G$ is an inner form of $\GL_n$, its cohomology as a $\CG(\CO_k) \times \CT(\CO_k)$-representation was determined in \cite[Theorem 6.1.1]{CI_DrinfeldStrat}, the case of division algebras (where the closed stratum coincides with the whole scheme $X$) being already handled in \cite{Chan_siDL}. The main goal of the present article is to extend these results to all $G$, thus giving a full account of the cohomology of the closed stratum. As a consequence we also produce a rich supply of maximal varieties in the sense of \cite{BoyarchenkoW_16} associated with groups other than $\GL_n$. Our second goal is to investigate how this cohomology relates to representations obtained via Kirillov's orbit method, see below. 

\smallskip

To state our main result, let $\CG^+$ be the pro-unipotent radical of $\CG$ and let $\CT^+,\CU^+$ be the closures of $T,U$ in $\CG^+$. Then the closed stratum is a disjoint union of finitely many copies of the affine perfect scheme 
\begin{equation}\label{eq:Xplus}
Y = \{ g \in \CG^{+}(\CO) \colon g^{-1}F(g) \in (\overline\CU \cap F\CU^+)(\CO)\}.
\end{equation}
with $\CG^{+}(\CO_k) \times \CT^{+}(\CO_k)$-action. As $Y$ is infinite-dimensional, it has no reasonable cohomology with compact support. We could remedy this by working with quotients of $Y$ attached to Moy--Prasad quotients of $\CG^{+}$ (and on the technical level we will do precisely this). However, it seems most natural to state our results in terms of the homology functor $f_\natural$, which is the left adjoint of $f^\ast$, introduced in \cite{IvanovM} in the schematic context following the approach of \cite[VII.3]{FarguesScholze} (see \S\ref{sec:homology_in_general} for more details). Let therefore $H_i(Y,\cool)$ denote the homology groups of the complex $f_\natural\cool$, where $f \colon Y \rar \Spec \obF$ is the structure map. If $\chi$ is a smooth character $\CT^+(\CO_k) \rar \cool^\times$, we also have the $\chi$-weight part $f_\natural\cool[\chi]$ of $f_\natural\cool$. Let $N \geq 1$ be the smallest positive integer with $F^N U = U$. Then $Y$ has an obvious $\BF_{q^N}$-rational structure. 

\begin{theorem}\label{thm:intro} Suppose that $(T, U)$ is a Coxeter pair. For a smooth character $\chi \colon \CT^{+}(\CO_k) \rar \cool^\times$ the following hold. 
\begin{itemize}
\item[(1)] Assume that $p$ satisfies Condition \ref{hypo}\footnote{this holds if if the derived group of $G$ is simply connected and $p\geq 5$; it also always holds if $p$ does not divide the order of the Weyl group of $G$.}. The homology of $f_\natural \cool[\chi]$ is non-vanishing in precisely one degree $s_{\chi} \geq 0$. 

\smallskip

\item[(2)] Assume that $p$ satisfies Condition \ref{hypo}. The Frobenius $F^N$ acts in the space $H_{s_\chi}(Y,\cool)[\chi]$ as multiplication by the scalar $(-1)^{s_\chi} q^{s_\chi N/2}$. In particular, all Moy--Prasad quotients of $Y$ are $\BF_{q^N}$-maximal varieties.

\smallskip

\item[(3)] For varying $\chi$, $H_{s_\chi}(Y,\cool)[\chi]$ runs through pairwise non-isomor\-phic irreducible smooth $\CG^{+}(\CO_k)$-representations.
\end{itemize}
\end{theorem}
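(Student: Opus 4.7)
The plan is to reduce to classical compactly supported $\ell$-adic cohomology of finite-dimensional Moy--Prasad quotients $Y_r := Y/\CG^{(r+1)+}(\CO)$ of $Y$ and then induct on $r$ using the Moy--Prasad filtration. Since $\chi$ is smooth, it is trivial on $\CT^{(r+1)+}(\CO_k)$ for some $r \geq 0$, and the $\chi$-isotypic component $f_\natural \cool[\chi]$ is then computed, up to a shift by $2\dim Y_r$, by the classical $H^{\ast}_c(Y_r, \cool)[\chi]$. The tower $Y_0 \leftarrow Y_1 \leftarrow \ldots \leftarrow Y_r$ starts from a classical Deligne--Lusztig variety $Y_0$ for the reductive quotient $\BG_{0+}$, attached to the Coxeter element $c$ of the relative Weyl group; each step $Y_i \to Y_{i-1}$ is a Lang-type torsor whose fibers are affine spaces of the form $\{x \in V_i : x - F_c(x) \in W_i\}$, where $V_i, W_i$ are built from root-space graded pieces of the Moy--Prasad filtration of $\CU$ and $\ov\CU \cap F\CU$, and $F_c$ is the Coxeter-twisted Frobenius.

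For parts (1) and (2), I would run the Leray spectral sequence for each step $Y_i \to Y_{i-1}$ and, after decomposing $\chi$ along the filtration of $\CT^+$, analyze the resulting cohomology on each fiber. Since $(T,U)$ is Coxeter, $F_c$ permutes the relevant affine root spaces through orbits of length dividing $N$ with no fixed points; this makes each fiber cohomology behave like that of a product of classical Deligne--Lusztig varieties for anisotropic tori, hence concentrated in a single degree, with $F^N$ acting by $(-1)^{d} q^{dN/2}$ on each $d$-dimensional contribution, where Condition~\ref{hypo} is exactly what is needed to exclude stray roots of unity. The main obstacle is to show that these spectral sequences degenerate after passing to $\chi$-isotypic parts and that the concentration degree and the Frobenius eigenvalue propagate through the tower: the key point is that, at each level, only one diagonal of the spectral sequence can contain a copy of $\chi$, so the spectral sequence collapses, and the induction closes with total degree $s_\chi$ and eigenvalue $(-1)^{s_\chi} q^{s_\chi N/2}$.

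For part (3), I would interpret $H_{s_\chi}(Y,\cool)[\chi]$ as a compactly induced $\CG^{+}(\CO_k)$-representation from the stabilizer of $\chi$, which in the Coxeter case is forced to be as small as possible -- essentially the product of $\CT^{+}(\CO_k)$ with a precisely identifiable Moy--Prasad step at the depth of $\chi$. The restriction of the representation to this stabilizer should be an extension of $\chi$ by a Heisenberg-type construction at the appropriate level, identifiable from the geometric setup above. Irreducibility of the induced representation then follows from the Mackey criterion, combined with the fact that the Coxeter condition forces $\chi$ to be regular with respect to the relative Weyl group of $T$ in $G$. Pairwise non-isomorphism for distinct $\chi$ follows similarly, since one can recover $\chi$ (up to Weyl conjugacy, which is trivial under the Coxeter/ellipticity hypothesis) from the representation by extracting $\CT^{+}(\CO_k)$-weights.
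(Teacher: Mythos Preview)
Your overall shape for parts (1) and (2)---pass to a finite Moy--Prasad level $Y_r$ and induct along the filtration---matches the paper, but two concrete points are off.

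First, the base of the tower is not a classical Deligne--Lusztig variety: $Y$ sits inside $\CG^{+}(\CO)$, not $\CG(\CO)$, and for the minimal step one has $Y_{f_{\min}}=\{1\}$. You may be conflating $Y$ with the full scheme $X$ (whose image in $\BG_{0+}$ is indeed a classical Coxeter variety); for the closed stratum the induction starts at a point.

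Second, and more seriously, you are missing the mechanism that makes the induction close at the ``torus'' levels $h\in\BZ_{\ge 1}$. At such a level the fiber of $Y_{h+}\to Y_h$ is not an affine space but a Lang torsor for $V=X_*(T)\otimes\overline\BF_q$, and taking $\chi$-isotypic parts produces a multiplicative local system $\CL_{\chi_{h+}^h}$ on $Y_h$. The cohomology $H^*_c(Y_h,\pi^*\CL_{\chi_{h+}^h})$ is \emph{not} concentrated in one degree merely by the Coxeter condition; what the paper does is introduce the $F$-stable Levi $M_\chi\subseteq G$ cut out by the coroots on which $\chi\circ\Nm_N$ is trivial, and prove (this is the main technical work, involving explicit commutator computations in $\BG_r^+$) that
\[
H^j_c(Y_h,\pi^*\CL_{\chi_{h+}^h}) \cong H^{j-(m+n-1)}_c(Y_h^{M_\chi},\pi_{M_\chi}^*\CL_{\chi_{h+}^h})^{\oplus q^{(m+n-1)N/2}}\bigl((-q^{N/2})^{m+n-1}\bigr),
\]
where $m+n-1$ counts certain affine roots outside $M_\chi$. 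Condition~\ref{hypo} enters here to guarantee $M_\chi\neq G$ when $\chi_{h+}^h$ is nontrivial. Only after this Levi reduction can one invoke the rank-one computation (essentially Boyarchenko--Weinstein for $\BG_a$) that you allude to. Your spectral-sequence language does not by itself produce this reduction; without it the claimed ``only one diagonal can contain $\chi$'' is unjustified.

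For part (3), your approach via explicit stabilizers and Mackey is genuinely different from the paper's. The paper instead follows the Deligne--Lusztig template: set $\Sigma=(\BG_r^+)^F\backslash(Y_r\times Y_r)$ and show $H^*_c(\Sigma)\cong H^*_c((\BT_r^+)^F)$ as $(\BT_r^+)^F\times(\BT_r^+)^F$-modules, which immediately gives both irreducibility and pairwise non-isomorphism for \emph{all} $\chi$. The key input is the loop Steinberg cross-section (Proposition~\ref{St}), which lets one rewrite $\Sigma$ as a scheme carrying a contracting $\BT_{r,\mathrm{red}}$-action with fixed locus $(\BT_r^+)^F$. Your route would require identifying the representation explicitly (Heisenberg-type at each depth) and is more laborious; moreover your appeal to ``$\chi$ regular with respect to the relative Weyl group'' is misplaced---no regularity hypothesis on $\chi$ is made or needed, and the result holds for every smooth $\chi$, including the trivial one.
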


This theorem follows from Theorems \ref{main}, \ref{thm:irred} and Corollary \ref{max} (where for part (1) the discussion of \S\ref{sec:homology_in_general} and Corollary \ref{cor:transition_maps_Yr} apply). We determine the integer $s_\chi$ explicitly in terms of the Howe factorization of $\chi$, see Corollary \ref{cor:formal_degree}.

\smallskip

In fact, the same proof of Theorem \ref{thm:irred}, combined with Remark \ref{minimal}, shows that the statement (3) of Theorem \ref{thm:intro} is true if $(T, U)$ is a minimal elliptic pair, see \S\ref{sec:Coxeter_pairs}. This partially motivates us to propose the following conjecture.
\begin{conjecture}
    Theorem \ref{thm:intro} holds for all minimal elliptic pairs $(T, U)$.
\end{conjecture}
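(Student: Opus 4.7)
The plan is to follow the architecture of the proof in the Coxeter case, replacing those ingredients that rely specifically on the Coxeter structure by analogous statements for minimal elliptic pairs. Three inputs underlie the Coxeter proof: (a) a good fibration/stratification of $Y$ coming from the Moy--Prasad filtration on $\CG^{+}$; (b) a reduction of the character $\chi$ via its Howe factorization, decomposing the problem depth-by-depth; and (c) a geometric analysis of each graded piece showing concentration in one cohomological degree and computing the Frobenius eigenvalue. Parts (a) and (b) are essentially formal and should carry over verbatim to any elliptic pair, since they only use the Bruhat--Tits structure of $\CG$ and the standard theory of Howe factorizations of smooth characters of elliptic tori.

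The hard part is (c). For a Coxeter element $w=w(T,U)$ one has a distinguished reduced expression, and one exploits it to identify each graded piece, up to a shift, with an affine space (or an iterated Artin--Schreier torsor) whose $\chi_r$-isotypic component is known to be one-dimensional and concentrated in middle degree. For a general minimal elliptic element $w$ of minimal length in its $F$-conjugacy class, I would first invoke the theory of minimal length elements in twisted conjugacy classes (He--Nie, and the refinements used in \cite{Nie_23}) to produce a suitable reduced expression together with a "good" decomposition of the root system, separating the roots of $U\cap F^{-1}\overline U$ in a way compatible with the Moy--Prasad grading. This should yield, for each integer $r\geq 0$, a decomposition of the graded piece $Y_r/Y_{r+1}$ as a vector-bundle-like object over an Artin--Schreier torsor attached to $w$ acting on a subspace of the Moy--Prasad quotient $\Lie \CG^{+}_r/\Lie \CG^{+}_{r+1}$. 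The minimality of the length of $w$ is precisely what is needed to guarantee that the relevant linear operator $1-w\circ F$ has no nontrivial kernel on pieces where $\chi_r$ is nontrivial, and this is the exact analogue of the Coxeter-case input.

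With this structural input at hand, concentration in one degree and the computation of $s_\chi$ (in terms of the Howe factorization of $\chi$) proceed by induction on the depth of $\chi$, exactly as in the proof of Theorem \ref{main}. The Frobenius eigenvalue claim in (2) and the resulting maximality follow from a Lefschetz fixed-point count on each Moy--Prasad quotient: the point count of $Y_r^{F^N}$ with weighting by $\chi$ is a product of Gauss-type sums coming from each graded piece, each of which contributes a factor $\pm q^{N/2}$ by the Artin--Schreier identification above. The main obstacle is therefore the construction of the "good" reduced expression of $w$ referred to in the previous paragraph, together with the verification that the affine root system pieces interacting with each Moy--Prasad layer behave as in the Coxeter case; this is a purely combinatorial statement about affine Weyl groups, which I would expect to be accessible via the class-polynomial techniques of He--Nie.

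Finally, part (3) requires no new input: as noted after the statement, the proof of Theorem \ref{thm:irred} combined with Remark \ref{minimal} already establishes pairwise non-isomorphism and irreducibility of the $H_{s_\chi}(Y,\cool)[\chi]$ for minimal elliptic $(T,U)$. Once (1) and (2) are proved, one reads off irreducibility and non-isomorphism of these representations of $\CG^{+}(\CO_k)$ from the same argument used in the Coxeter setting, since it only uses the homological concentration and the $\CT^{+}(\CO_k)$-weight decomposition, both of which are then available.
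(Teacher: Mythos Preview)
The statement you are addressing is a \emph{conjecture}: the paper does not prove it and explicitly leaves parts (1) and (2) open for minimal elliptic pairs. Only part (3) is established in that generality (via Theorem \ref{thm:irred} and Remark \ref{minimal}), as you correctly observe in your final paragraph. So there is no ``paper's own proof'' to compare against for (1) and (2), and your proposal should be read as a strategy toward an open problem rather than a reconstruction of an existing argument.

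That said, your sketch has a concrete gap that explains why the paper stops at a conjecture. You assert that inputs (a) and (b) ``are essentially formal and should carry over verbatim to any elliptic pair''. They do not. The entire machinery of \S5---Lemma \ref{orbit}, the linear order of Definition \ref{order}, Proposition \ref{fiber}, the sets $A_i, B_i, C_i$ of Lemma \ref{sum}, and ultimately Propositions \ref{factor} and \ref{key}---is built on the hypothesis stated at the opening of \S5, namely $\Phi/\langle c\sigma_W\rangle \cong \Delta$ and $\#\Phi = N\cdot\#\Delta$. This says precisely that every $F$-orbit on $\Phi$ has length exactly $N$ and meets $\Delta$ once; it is a special feature of Coxeter elements and fails for general minimal length elliptic elements, where $F$-orbits on $\Phi$ can have different lengths dividing $N$. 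Without uniform orbit length, Lemma \ref{orbit} breaks, the total order collapses, and the inductive fibration of Proposition \ref{fiber} no longer peels off one affine line per step in a way compatible with the Howe depth. Your step (c), which you flag as ``the hard part'', is therefore not the only obstacle: the combinatorial scaffolding you take for granted in (a) also needs a genuinely new idea.

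Your heuristic that ``minimality of the length of $w$ is precisely what is needed to guarantee that $1 - w\circ F$ has no nontrivial kernel'' is suggestive but unsubstantiated here; in the paper the analogous nonvanishing (Proposition \ref{computation}) comes from an explicit norm computation over $\BF_{q^N}$ that again uses the uniform orbit length. Invoking He--Nie class-polynomial techniques is a reasonable direction, but as written your proposal identifies the obstacle without resolving it.
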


Using parts (1),(2) of the theorem along with a fixed point formula of Boyarchenko \cite[Lemma 2.12]{Boyarchenko_12}, we give the following representation-theoretic interpretation of the integer $s_\chi$, generalizing \cite[Lemma 8.1]{CI_loopGLn}. 

\begin{corollary}\label{cor:dimension}
If Condition \ref{hypo} holds for $p$, then $\dim_{\cool} H_{s_\chi}(Y,\cool)[\chi] = q^{s_\chi N/2}$.
\end{corollary}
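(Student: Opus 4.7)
The approach is to combine the Grothendieck--Lefschetz trace formula with the scalar action of Frobenius furnished by Theorem~\ref{thm:intro}(2), following the pattern of \cite[Lemma 8.1]{CI_loopGLn}, which handled the $\GL_n$ case. The point is that once the homology is concentrated in one degree and Frobenius acts there as an explicit square root of $q$, the Lefschetz trace reduces the computation of $\dim H_{s_\chi}$ to a count of twisted fixed points, which the Lang--Steinberg theorem makes accessible.

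First I would pass to a finite-type setting. Choose $r \geq 1$ large enough so that $\chi$ factors through the finite quotient $A_r := \CT^{+}(\CO_k)/\CT^{(r)}(\CO_k)$ coming from the Moy--Prasad filtration, and let $Y_r$ denote the corresponding Moy--Prasad quotient of $Y$. This is a smooth perfect $\obF$-scheme of finite type carrying commuting actions of $A_r$ and $F^N$, and by Corollary~\ref{cor:transition_maps_Yr} the $\chi$-isotypic component $H_{s_\chi}(Y,\cool)[\chi]$ is canonically identified, up to a controlled cohomological shift by a relative dimension, with the unique nonvanishing piece of $H^\ast_c(Y_r,\cool)[\chi]$. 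Theorem~\ref{thm:intro}(1),(2) then says that this compactly supported cohomology is concentrated in a single degree and that $F^N$ acts there as the scalar $(-1)^{s_\chi} q^{s_\chi N/2}$.

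Second, I would invoke Boyarchenko's Lemma \cite[Lemma 2.12]{Boyarchenko_12}, which equates the $\chi$-isotypic Lefschetz trace on $Y_r$ with an averaged count of twisted fixed points:
\[
\sum_i (-1)^i \tr\!\bigl(F^N, H^i_c(Y_r,\cool)[\chi]\bigr) \;=\; \frac{1}{|A_r|}\sum_{a \in A_r}\chi(a)^{-1}\,\bigl|\{y \in Y_r(\obF) : a\cdot F^N(y)=y\}\bigr|.
\]
By the preceding step, the left-hand side collapses to $q^{s_\chi N/2}\dim_{\cool} H_{s_\chi}(Y,\cool)[\chi]$. On the right-hand side, each twisted fixed-point locus is a Lang-type torsor on a Moy--Prasad quotient of the pro-unipotent group $\CG^{+}$, so Lang--Steinberg expresses each $|(Y_r)^{a\cdot F^N}|$ as an explicit power of $q$; summing against $\chi^{-1}$ and using orthogonality of characters of $A_r$, together with the maximality supplied by Theorem~\ref{thm:intro}(2), one finds that the total simplifies to $q^{s_\chi N}$. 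Equating the two sides and cancelling $q^{s_\chi N/2}$ yields the claim.

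The main obstacle is the evaluation of the twisted point count: one must verify that the Lang--Steinberg argument on the various Moy--Prasad quotients genuinely produces $q^{s_\chi N}$ after averaging against $\chi^{-1}$, and that the answer is independent of the auxiliary level $r$. The bookkeeping of the shifts between $H_\ast(Y,\cool)$ and $H^\ast_c(Y_r,\cool)$ required to legitimately pipe Theorem~\ref{thm:intro}(2) into Boyarchenko's lemma in the perfect-scheme setting is also non-trivial, but should be routine given Corollary~\ref{cor:transition_maps_Yr} and the discussion of \S\ref{sec:homology_in_general}.
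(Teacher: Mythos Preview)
Your overall strategy is exactly the paper's: pass to $Y_r$, invoke Boyarchenko's lemma with the single-degree concentration and scalar Frobenius action, and reduce to a twisted point count (this is Proposition~\ref{cor:traces}). The substantive gap is the point count itself. Your claim that ``each twisted fixed-point locus is a Lang-type torsor'' is not correct: $Y_r$ is a subvariety of $\BG_r^+$, not a subgroup, so the set $S_{1,t}=\{y\in Y_r(\obF):F^N(y)=yt\}$ is constrained both by $y^{-1}F^N(y)=t$ and by $y^{-1}F(y)\in\overline\BU_r\cap F\BU_r^+$, and Lang--Steinberg says nothing about the interaction of these two conditions. The paper instead shows, by induction over $F$-stable closed subsets $A\subseteq\tPhi^+$, that $y\in S_{1,t}$ forces $t=1$ and $y^{-1}F(y)=1$; hence $S_{1,t}=\varnothing$ for $t\neq 1$ while $S_{1,1}=(\BG_r^+)^F$. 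Only one term survives the sum, and no character orthogonality enters. Your appeal to maximality cannot substitute for this step, since maximality of $Y_r$ is itself a consequence of Theorem~\ref{main} rather than an independent input.

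There is also a bookkeeping slip. Boyarchenko's lemma is applied to $H_c^\ast(Y_r,\cool)$, where the nonvanishing degree is $s_{\chi,r}=2\dim Y_r-s_\chi$ and, by Theorem~\ref{main}, the Frobenius eigenvalue is $(-1)^{s_{\chi,r}}q^{s_{\chi,r}N/2}$. So the left side collapses to $q^{s_{\chi,r}N/2}\cdot\dim$, and the right side to $\#(\BG_r^+)^F/\#(\BT_r^+)^F$; one then still needs the elementary dimension identity $\#(\BG_r^+)^F/\#(\BT_r^+)^F=q^{N\dim Y_r}=q^{(s_{\chi,r}+s_\chi)N/2}$ to extract $\dim=q^{s_\chi N/2}$. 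Your intermediate values (left side $=q^{s_\chi N/2}\dim$, right side $=q^{s_\chi N}$) do not match these.
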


This corollary is proven in \S\ref{sec:traces}. More generally, we obtain a trace formula for any element of $\CG^+(\CO_k)$ on $H_{s_\chi}(Y,\cool)[\chi]$ in terms of geometric points of (a Moy--Prasad quotient of) $Y$, see Proposition \ref{cor:traces}.

\smallskip

To apply our main result to the cohomology of $X_{T,U}$ (in the style of \cite{CI_loopGLn}) it is necessary to study the relation between the cohomology of $X$ and of the closed stratum (\cite[Conjecture 6.5]{ChanO_21}); this will be considered in a follow-up work. Once this is done, our results, combined with the main results of \cite{ChanO_21} and \cite{DudasI_20} (see \cite[Corollary 1.0.2]{DudasI_20}), would give geometric approaches to some representation-theoretic questions. For example, Corollary \ref{cor:dimension} allows a purely geometric proof of the formal degree formulas for many supercuspidal representations (note that an algebraic computation is given in the recent work of Schwein \cite{Schwein_24}).

\smallskip

The second goal of this article is to formulate and verify in a special case a conjecture about the relation of the homology of $Y$ with Kirillov's orbit method for the pro-$p$-group $\CG^+(\CO_k)$, whenever the latter applies. 
Namely, by a theory of Lazard, a uniform pro-$p$-group (resp. a $p$-group of nilpotence class $<p$) $\Gamma$ is completely described by its $\BZ_p$-Lie algebra (resp. finite Lie ring) $\fkg$ via an exponential map, see \cite[\S2]{BoyarchenkoD_10}. Kirillov's orbit method establishes a natural bijection between smooth irreducible representations of $\Gamma$ and adjoint $\Gamma$-orbits in the dual $\fkg^\ast = {\rm Hom}_{\rm cont}(\fkg,\cool^\times)$, see \cite{BoyarchenkoS_08}, characterized by a trace formula. Often it happens that $\CG^+(\CO_k)$ (resp. its Moy--Prasad quotient) is a uniform pro-$p$-group (resp. $p$-group of nilpotence class $<p$).
In this case the natural question to determine the adjoint orbit corresponding to $H_{s_\chi}(Y,\cool)[\chi]$ arises. In Conjecture \ref{conj:relation_to_orbit_method} we make this precise. We verify this conjecture for the finite $p$-group $\{ g \in \GL_2(\BF_q[\varpi]/\varpi^3) \colon g \equiv 1 \mod \varpi\}$ if $q$ is odd.

\smallskip

Finally, we complete the task of comparing the spaces $X_{T,U}$ from \eqref{eq:XTU} with the $p$-adic Deligne--Lusztig spaces from \cite{Ivanov_DL_indrep}, when $(T,U)$ is a Coxeter pair. This was done for classical groups in \cite[Proposition 5.12]{Ivanov_Cox_orbits}, and in \S\ref{sec:comparing_toXwb} we prove it for general $G$. To achieve this, we need to extend the loop version of twisted Steinberg's cross-section (see \cite[3.6]{HeL_12}, \cite[Proposition 5.3]{Ivanov_Cox_orbits} and \cite{Malten_21}) to non-classical groups, see Proposition \ref{St}. Note that this result is also used in the proof of Theorem \ref{thm:intro}(3). 

\subsection*{Acknowledgements} The first author gratefully acknowledges the support of the German Research Foundation (DFG) via the Heisenberg program (grant nr. 462505253). He would like to thank Moritz Firsching for answering his questions related to SAGE. The second author would like to thank Miaofen Chen, Xuhua He, Jilong Tong and Weizhe Zheng for answering his questions and for helpful discussions.

\section{Notation and setup}

\subsection{General notation}\label{sec:general_notation}

Throughout the article we let $\breve k/k$ with integers $\CO_k \subseteq \CO$, residue field extension $\obF/\BF_q$, and Frobenius $F$ be as in the introduction. We denote by $\varpi$ a uniformizer of $k$.

Given a $\BF_q$-algebra $R$, let ${\rm Perf}_R$ be the category of perfect $R$-algebras. For $R \in {\rm Perf}_{\BF_q}$, let $W(R)$ be the ring of $p$-typical Witt vectors of $R$, and put $\BW(R) = W(R) \otimes_{\BZ_p} \CO_k$ if ${\rm char}(k) = 0$, resp. $\BW(R) = R[\![\varpi]\!]$ otherwise. In particular, $\BW(\BF_q) = \CO_k$ and $\BW(\obF) = \CO$. Let $[\cdot] \colon R \rar \BW(R)$ be the Teichm\"uller lift if ${\rm char}(k) = 0$, resp. $[x] = x$ if ${\rm char}(k) > 0$. 

Let $\CX$ be any $\CO$-scheme and let $X$ be any $\breve k$-scheme. We will abbreviate 
\[\breve \CX := \CX(\CO) \quad \text{ and } \quad \breve X = X(\breve k). \] 
Suppose that $\CX$ is affine and of finite type over $\CO$. We regard the set $\breve \CX$ as a perfect affine scheme $\BX$ over $\obF$, so that $\BX(\obF) = \breve \CX$. More precisely, one puts $\BX = L^+\CX$, where $L^+\CX \colon {\rm Perf}_{\obF} \rar {\rm Sets}$, $L^+\CX(R) = \CX(\BW(R))$ is the functor of positive loops, see e.g. \cite[\S2.5]{CI_MPDL} for details. We always will identify the scheme $\BX$ with the set $\breve \CX$ of its geometric points. If $\CX$ is defined over $\CO_k$, $\BX$ has a natural $\BF_q$-structure, corresponding to the $F$-action on $\breve \CX$. Moreover, the set 
\[ 
\BX(\BF_q) = \breve \CX^F = \CX(\CO_k)
\] 
has a natural structure of a profinite set. Similarly, if $X$ is affine of finite type over $\breve k$, then we regard $\breve X$ as an ind-(perfect affine scheme) over $\obF$ via the loop functor $LX \colon {\rm Perf}_{\obF} \ni R \mapsto X(\BW(R)[p^{-1}])$, and the same claim about $\BF_q$-structure holds, except that now $\breve X^F$ is only locally profinite.

\subsection{Group-theoretic setup} We fix a reductive group $G$ defined over $k$ and split over $\breve k$. We fix a $k$-rational, $\breve k$-split maximal torus $T$ of $G$, we denote by $N_G(T)$ its normalizer. Its Weyl group $W= N_G(T)/T$ is a finite \'etale group scheme over $k$ becoming constant over $\breve k$. We identify $W$ with the set of its $\breve k$-points, endowed with the action of $F$. We denote by $X_\ast(T)$, $X^\ast(T)$ the groups of (co)characters of $T_{\breve k}$, equipped with natural $F$-actions, and by $\langle,\rangle \colon X^\ast(T) \times X_\ast(T) \rar \BZ$ the natural $W$- and $F$-equivariant pairing. We denote by $N$ the order of $F$ as an automorphism of $X_\ast(T)$. 

We fix a Borel subgroup $T \subseteq B \subseteq G$ defined over $\breve k$, and we denote by $U$ the unipotent radical of $B$. Denote by $\Phi \subseteq X^\ast(T)$ the set of roots of $T$ in $G$, and by $\Phi^+$ resp. $\Phi^-$ the subset of positive roots corresponding to $U$ resp. $\overline U$. For each $\alpha \in \Phi$, let $U_\alpha \cong \BG_{a,\breve k}$ denote the corresponding root subgroup.

\subsection{Filtration of the torus and affine roots}\label{sec:filtrations_affine_roots}
Let $\CT$ denote the connected N\'eron model of $T$. Let $\breve T^0$ be the maximal bounded subgroup of $\breve T$. Then $\CT(\CO) = \breve T^0$.
Moreover, for $r \in \BZ_{\geq 0}$, 
\[
\breve T^r = \{t \in \breve T^0 \colon \ord_{\varpi}(\chi(t) - 1) \geq r \, \forall \chi \in X^\ast(T) \}
\]
defines a descending separated filtration on $\breve T$. For each $r$ one has an isomorphism 
\[ 
V := X_\ast(T) \otimes \obF \stackrel{\sim}{\longrightarrow} \breve T^r /\breve T^{r+1}, \quad \lambda \otimes x \mapsto \lambda(1 + [x]\varpi^r).
\] 

Fix some (e.g., hyperspecial) point ${\bf x}_0$ in the apartment $\CA_{T,\breve k}$ of $T$ in the reduced affine building of $G$ over $\breve k$. Let 
\[
\tPhi_\aff = \{\a+m: x \mapsto -\a(x - \bx_0) + m; \alpha \in \Phi, m \in \BZ \} \cong \Phi \times \BZ
\]
be the set of affine roots.
Let $\widetilde\Phi = \Phi_{\rm aff} \sqcup \BZ_{\geq 0}$ be the (enlarged) set of affine roots of $T$ in $G$. For an affine root $\a+m$, we have the corresponding subgroup $\brU_{\a+m} \subseteq \brU$. For $m \in \BZ_{\geq 0}$, the corresponding root subgroup is $\breve T^m$. There is an action of $F$ on $\widetilde \Phi$, such that $F\brU_{\a+m} = \brU_{F(\a+m)}$.

\subsection{Parahoric model and Moy--Prasad quotients} Assume that $T$ is elliptic. Then the apartment of $T$ in the reduced affine building of $G$ over $k$ consists of precisely one point $\bfx$. We denote by $\CG$ the parahoric $\CO_k$-model of $G$ with connected special fiber attached to $\bfx$, and by $\CG^+$ its pro-unipotent radical. 

If $H \subseteq G$ is a closed subgroup, then we denote by $\CH$ the closure of $H$ in $\CG$.\footnote{Note that for $H=T$ there is no conflict of notation with \S\ref{sec:filtrations_affine_roots} as the closure of $T$ in $\CG$ is the connected N\'eron model of $T$ by \cite[4.7.4 Lemma and 8.2 Corollary]{Yu_02}.} Similarly, we denote by $\CT^+$ the closure of $T$ in $\CG^+$.

Note that $\breve \CG$ (resp. $\breve \CG^+$) is generated by all $\breve U_{f}$ with $f \in \widetilde\Phi$ satisfying $f(\bfx) \geq 0$ (resp. $f(\bfx) > 0$), and that $\breve \CG/\breve \CG^+$ is naturally isomorphic to the reductive quotient of the special fiber of $\CG$.

For any $h = r$ or $h = r+$ with $r \in \BZ_{\geq 0}$, Moy--Prasad have defined in \cite{MoyP_94} the normal $F$-stable subgroup $\breve \CG^h \subseteq \breve\CG$ generated by all $\breve U_f$ with $f \in \widetilde\Phi$ satisfying $f(\bfx) \geq h$. Note that $\breve \CG = \CG(\CO)^0$ and $\breve \CG^+ = \CG(\CO)^{0+}$. There is a smooth $\BF_q$-group scheme $\BG_r$ with 
\[ \BG_r(\obF) = \breve \CG/\breve\CG^r. \] 
It has the subgroup $\BG_r^+ = \breve \CG^+/\breve \CG^r$, and the set of affine roots appearing in $\BG_r^+$ is
\[
\widetilde \Phi_r^+ = \{f \in \widetilde\Phi \colon 0 < f(\bfx) < r \}.
\]
According with \S\ref{sec:general_notation}, we have also the $\BF_q$-groups $\BG$ and $\BG^+$ such that $\BG(\obF) = \breve \CG$ and $\BG^+(\obF) = \breve \CG^+$. Note that $\BG = \prolim_r\BG_r$ and $\BG^+ = \prolim_r \BG^+_r$.

Note that any of the subgroups $H = T,B,U, \dots$ of $G$ defines a closed subgroup $\BH_r \subseteq \BG_r$ (resp. $\BH \subseteq \BG$) by first taking the closure $\CH \subseteq \CG$ of $H$, and then letting $\BH_r(\obF)$ be the image of the map $\CH(\CO) \rar \CG(\CO) \rar \BG_r(\obF)$. Similarly, $H$ defines a closed subgroup $\BH_r^+ \subseteq \BG_r^+$ (and $\BH^+ \subseteq \BG^+$). Note that if $F^sH = H$ for some $s\geq 1$, then $\BH_r,\BH_r^+$ are defined over $\BF_{q^s}$.

\subsection{Coxeter pairs}\label{sec:Coxeter_pairs}
Let $c \in W$ be the unique element such that $FB = {}^c B$. Then for any lift $\dot c$ of $c$, $\Ad({\dot c})\i \circ F: \brG \to \brG$ fixes the pinning $(T,B)$, hence defines an automorphism $\sigma_W$ of the Coxeter system $(W,S)$. We call $(T,B)$ (and $(T,U)$) a \emph{Coxeter pair} if $c$ is a Coxeter element in the Coxeter triple $(W,S,\sigma_W)$, that is, if a(ny) reduced expression of $c$ contains precisely one element from each $\sigma_W$-orbit on $S$. More generally, $(T, U)$ is called a \emph{minimal elliptic pair} if $c$ is of minimal length in its $\s_W$-twisted conjugacy class. We have implications $(T,B)$ Coxeter $\Rightarrow$ $(T,B)$ minimal elliptic $\Rightarrow$ $T$ is elliptic.

\smallskip

We define 
\[
\Delta := \Phi^- \cap F\Phi^+ 
\]
Note that if $(T,B)$ is Coxeter, then each $F$-orbit in $\Phi$ has length exactly $N$ and intersects the set in precisely one element, see e.g. \cite[\S7]{Steinberg_65}. In particular, $\#\Delta$ is equal to the semisimple rank of $G$, $\Phi / \<c \s_W\> \cong \D$ and $\#\Phi = N \cdot \#\Delta$.

\subsection{A condition on $p$} \label{sec:condition_on_p}
Assume that $T$ is elliptic. We will prove Theorem \ref{main} under the following condition on the characteristic $p$ of $\BF_q$, which is satisfied if $p$ does not divide the order of the Weyl group of $G$. 

\begin{condition} \label{hypo}
The characteristic $p$ of $\BF_q$ is not a torsion prime for $\Phi$ (see \cite[Definition 1.3]{Steinberg_75}) and $p$ does not divide $\#\pi_1(M_{\rm der})$ for any $F$-stable Levi subgroup $M$ containing $T$. Here $M_{\rm der}$ denotes the derived subgroup of $M$.
\end{condition}

Note the all torsion primes for $\Phi$ are $\leq 5$. Note that the second part of this condition holds for all $p$ when $G_{\rm der}$ is simply connected. Let $P = P(G,T)$ denote the set of primes, for which this condition does not hold. If $G$ is of type $A_n$, then $P \subseteq \{\ell \text{ prime } \colon \text{$\ell$ divides $n$} \}$. If $G$ is of type $B_n$ or $C_n$ with $n$ even, then $P \subseteq \{2\}$. If $G$ is of type $B_n$ or $C_n$ with $n$ odd, then $P \subseteq \{2\} \cup \{\ell \text{ prime } \colon \text{$\ell$ divides $n$}\}$. If $G$ is of type $D_n$, then $P \subseteq \{\ell \text{ prime } \colon \text{$ \ell < n$}\}$.

We will use this condition in the proof of Theorem \ref{main} by applying the following lemma to derived subgroups of various $F$-stable unramified twisted Levi subgroups of $G$ containing $T$. Recall $V = X_\ast(T) \otimes \obF$ from \S\ref{sec:filtrations_affine_roots} and consider the following norm map 
\[
\Nm_N: V \to V, ~ v \mapsto v + F(v) + \cdots + F^{N-1}(v).
\]

\begin{lemma} \label{surj}
Suppose that $G$ is semisimple and $p$ does not divide $\#\pi_1(G)$. Then $V^F = \Nm_N(\BZ\Phi^\vee \otimes \BF_{q^N})$, where $\Phi^\vee$ is the set of coroots. 
\end{lemma}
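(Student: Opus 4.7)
The plan is to reduce the statement to a surjectivity property of a classical trace map, which then follows from Lang's theorem. First I would use the hypothesis $p\nmid\#\pi_1(G)$ to pass from the coroot lattice to the cocharacter lattice. Since $G$ is semisimple, $\pi_1(G)=X_\ast(T)/\BZ\Phi^\vee$ is a finite abelian group of order invertible in $\BF_{q^N}$, so both $\pi_1(G)\otimes_\BZ\BF_{q^N}$ and $\mathrm{Tor}^\BZ_1(\pi_1(G),\BF_{q^N})$ vanish; hence the inclusion $\BZ\Phi^\vee\hookrightarrow X_\ast(T)$ induces an $F$-equivariant isomorphism $\BZ\Phi^\vee\otimes\BF_{q^N}\xrightarrow{\sim} X_\ast(T)\otimes\BF_{q^N}$. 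It therefore suffices to prove $V^F=\Nm_N(X_\ast(T)\otimes\BF_{q^N})$.

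Next I would reformulate the question geometrically. Since $T$ is unramified, its connected N\'eron model $\CT$ is an $\CO_k$-torus, and its special fiber $\bar{T}:=\CT\otimes_{\CO_k}\BF_q$ is an $\BF_q$-torus whose cocharacter group identifies canonically with $X_\ast(T)$ as an $F$-module. The resulting isomorphism $V=X_\ast(T)\otimes_\BZ\obF\cong\Lie(\bar{T})(\obF)$ is $F$-equivariant, with $F$ on the right being the geometric $q$-Frobenius of the $\BF_q$-vector group $\Lie(\bar{T})$. Under this identification one has $V^F=\Lie(\bar{T})(\BF_q)$ and $X_\ast(T)\otimes\BF_{q^N}=\Lie(\bar{T})(\BF_{q^N})$, and $\Nm_N$ becomes the classical trace map $\Lie(\bar{T})(\BF_{q^N})\to\Lie(\bar{T})(\BF_q)$.

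The remaining task is the surjectivity of this trace, which is a form of additive Hilbert 90. Since $\Lie(\bar{T})$ is a smooth connected commutative $\BF_q$-group scheme (a twisted form of $\BG_a^{\dim T}$), Lang's theorem yields $H^1(\BF_q,\Lie(\bar{T}))=0$; inflation-restriction then forces $H^1(\Gal(\BF_{q^N}/\BF_q),\Lie(\bar{T})(\BF_{q^N}))=0$. As the acting group is cyclic and the module is finite, its Herbrand quotient equals one, so $\hat H^0$ also vanishes, which is exactly the surjectivity of $\Nm_N$ onto $\Lie(\bar{T})(\BF_q)=V^F$. Combining this with the first two steps yields the claim.

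I do not expect a serious obstacle here: the whole argument is standard Galois-cohomological descent. The only point to watch is verifying that the isomorphism $V\cong\Lie(\bar{T})(\obF)$ intertwines the abstract Frobenius $F_L\otimes\mathrm{Frob}_q$ on $X_\ast(T)\otimes\obF$ with the geometric Frobenius of $\Lie(\bar{T})$; this is a routine unramified-descent check. The hypothesis $p\nmid\#\pi_1(G)$ enters only in the purely formal first step to swap $\BZ\Phi^\vee$ for $X_\ast(T)$, and is not needed in the Lang-theoretic portion.
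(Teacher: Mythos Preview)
Your proposal is correct and follows essentially the same approach as the paper: first use $p\nmid\#\pi_1(G)$ to identify $\BZ\Phi^\vee\otimes\BF_{q^N}$ with $X_\ast(T)\otimes\BF_{q^N}$, then establish $V^F=\Nm_N(X_\ast(T)\otimes\BF_{q^N})$. The paper simply asserts the latter equality in one line as $V^F=\Nm_N(V^{F^N})$, whereas you supply a full justification via Lang's theorem and the Herbrand quotient; this extra detail is fine but not strictly necessary.
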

\begin{proof}
    By assumption we have $\BZ\Phi^\vee \otimes \BF_{q^N} = X_*(T) \otimes \BF_{q^N}$. Hence \[V^F = \Nm_N(V^{F^N}) = \Nm_N(X_*(T) \otimes \BF_{q^N}) = \Nm_N(\BZ\Phi^\vee \otimes \BF_{q^N})\] as desired.
\end{proof}

\subsection{Homology}\label{sec:homology_in_general}

For a morphism $Y \rar Z$ of perfect $\BF_p$-schemes and a coefficient ring $\Lambda$, which we assume to be either $\cool$ or $\overline\BF_\ell$ here, in \cite{IvanovM} the left adjoint $f_\natural \colon D_{\blacksquare}(Y,\Lambda) \rar D_{\blacksquare}(Z, \Lambda)$ of $f^\ast$ on unramified solid sheaves is constructed. Readers feeling uncomfortable with the use $f_\natural$, could just regard \eqref{eq:homology_of_Y} as a definition (which is well-behaved because of \eqref{eq:transition_maps_isom}). Assume that $Z = \Spec \overline\BF_q$, in which case we get the $\Lambda$-module 
\[
H_i(Y,\Lambda) := H^{-i}f_\natural\Lambda.
\] 
Assume now that $Y = \prolim_r Y_r$ with all $f_r \colon Y_r \rar \Spec \overline\BF_q$ perfections of smooth morphisms of dimension $d_r$. Assume that there are compatible actions of finite groups $\Gamma_r$ on $Y_r$, inducing an action of $\Gamma = \prolim_r \Gamma_r$ on $Y$. Let $\chi \colon \Gamma \rar \Lambda^\times$ be a smooth character. There is some $r_\chi \geq 0$ such that for each $r\geq r_\chi$, $\chi$ factors through a character of $\Gamma_r$ again denoted $\chi$. Assume that for all $r \geq r_\chi$ the map 
\begin{equation}\label{eq:transition_maps_isom}
f_{r !}\Lambda[\chi][2(d_r - d_{r_\chi})]  \rar  f_{r_\chi !}\Lambda[\chi].
\end{equation}
is an isomorphism. 
As $f_{\natural}$ commute with cofiltered limits of schemes, we have
\[
f_\natural \Lambda[\chi] = \prolim_r f_{r\natural}\Lambda[\chi] = \prolim_r f_{r !}\Lambda[\chi][2d_r] = f_{r_\chi !}\Lambda[\chi][2d_{r_\chi}],
\]
where the second equality holds because $f_r$ is smooth (and hence $f_{r \natural} = f_{r!}[2d_r]$) and the last equality is by \eqref{eq:transition_maps_isom}. With other words,
\begin{equation}\label{eq:homology_of_Y}
H_i(Y,\Lambda)[\chi] = H^{2d_r - i}_c(Y_{r_\chi},\Lambda)[\chi] \quad \text{for all $r \geq r_\chi$}.
\end{equation}

\section{Steinberg's cross-section}\label{sec:steinberg}

The following proposition is a variant of \cite[3.6, 3.14]{HeL_12}, generalizing \cite[Proposition 5.3]{Ivanov_Cox_orbits}.

\begin{proposition}\label{St} Suppose $(T,U)$ is a Coxeter pair. Then the map $(x, y) \mapsto x\i y F(x)$ induces isomorphisms: 
    
    (1) $(\BU_r \cap F\BU_r) \times (\overline\BU_r \cap F\BU_r) \cong  F\BU_r$;

    \smallskip
    
    (2) $\BU_r \times (\overline\BU_r \cap F\BU_r) \cong \BU_r F\BU_r = \BU_r (\overline\BU_r \cap F\BU_r) \cong \BU_r \times (\overline\BU_r \cap F\BU_r)$.

Moreover, the analogous statements also hold with $\BU_r$ replaced by $\BU_r^+$ or by $\breve \CU$ or by $\breve \CU^+$ or by $\breve U$.
\end{proposition}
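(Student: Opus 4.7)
My plan is to follow the strategy of the loop-Coxeter--Steinberg cross-section \cite[3.6, 3.14]{HeL_12}, \cite[Proposition 5.3]{Ivanov_Cox_orbits}, but to reorganize the argument so that it proceeds intrinsically from the affine root datum, which is what is needed to cover non-classical $G$. The combinatorial backbone is the Coxeter property: at levels $(0,r)$, the $F$-action on the affine root set has orbits of length $N$, each meeting the affine roots parameterizing $\overline\BU_r \cap F\BU_r$ in exactly one "bottom" element, with the remaining $N-1$ affine roots of the orbit lying in $\BU_r \cap F\BU_r$.

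For (1), I would build an explicit inverse $F\BU_r \to (\BU_r \cap F\BU_r) \times (\overline\BU_r \cap F\BU_r)$ by recursive elimination along $F$-orbits. Fix a total order on affine roots refining the natural partial order and running each orbit from the top down to its bottom in $\Delta$. Decompose $u \in F\BU_r$ as an ordered product $\prod u_f$. Processing from the top, at each step one chooses $x_f \in \brU_f \subseteq \BU_r \cap F\BU_r$ so that $F(x_f) \in \brU_{Ff}$ cancels the $\brU_{Ff}$-component of the partially conjugated $u$; the Chevalley commutator relations imply that perturbations only touch strictly later affine roots in the order, so the procedure is triangular and terminates after $N-1$ steps per orbit with every non-$\Delta$ component trivial. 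Collecting the $x_f$'s yields $x \in \BU_r \cap F\BU_r$, and what remains lies in $\overline\BU_r \cap F\BU_r$. Injectivity is parallel: if $z = x_2 x_1^{-1} \in \BU_r \cap F\BU_r$ satisfies $z y_1 = y_2 F(z)$, then a non-trivial top-affine-root component of $z$ would produce, via $F$, a non-trivial component of $y_2 y_1^{-1}$ outside the $\Delta$-stratum, contradicting $y_i \in \overline\BU_r \cap F\BU_r$.

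For (2), one combines (1) with the direct-product decomposition $\BU_r = (\BU_r \cap F\BU_r) \cdot (\BU_r \cap F\overline\BU_r)$ arising from $\Phi^+ = (\Phi^+ \cap F\Phi^+) \sqcup (\Phi^+ \cap F\Phi^-)$ together with the ordered root-subgroup product. The set equality $\BU_r F\BU_r = \BU_r (\overline\BU_r \cap F\BU_r)$ then follows by applying (1) inside the $F\BU_r$ factor and re-absorbing the residual $F(\BU_r \cap F\BU_r)$ contributions into the $\BU_r$-factor via Chevalley commutators propagated along the $F$-orbits; the multiplication map $\BU_r \times (\overline\BU_r \cap F\BU_r) \isoarrow \BU_r(\overline\BU_r \cap F\BU_r)$ is immediate from $\BU_r \cap \overline\BU_r = \{1\}$. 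Bijectivity of the cross-section $(x,y) \mapsto x^{-1} y F(x)$ on $\BU_r \times (\overline\BU_r \cap F\BU_r)$ then follows by exactly the same peeling argument as in (1), where the extra freedom $x \in \BU_r$ (instead of $x \in \BU_r \cap F\BU_r$) absorbs the additional components indexed by $\Phi^+ \cap F\Phi^-$. The "moreover" versions for $\BU_r^+$, $\breve\CU$, $\breve\CU^+$, $\breve U$ inherit the same affine-root decomposition and Coxeter $F$-orbit structure, so the arguments go through verbatim; the integral statements are obtained by passing to the inverse limit $\breve\CU = \prolim_r \BU_r$ (and analogously for the variants).

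The main obstacle is the Chevalley-commutator bookkeeping in the general setting. One must verify intrinsically from the root datum --- without relying on the matrix realizations available in the classical cases --- that (i) the recursive elimination is strictly triangular in the chosen order, (ii) the maps $F \colon \brU_f \to \brU_{Ff}$ along an orbit propagate the adjustments exactly as required, and (iii) the procedure never produces components leaking into the $\Delta$-stratum prematurely. This reduces to a purely combinatorial statement about how Coxeter $F$-orbits interact with the Chevalley commutator formulas, and is where the Coxeter assumption on $(T,U)$ is used most essentially.
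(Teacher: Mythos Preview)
Your outline captures the right shape of the argument, but it leaves the main difficulty unresolved and is more laborious than necessary in the easy cases.

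For $\BU_r$ and $\BU_r^+$ the paper does not build an explicit inverse at all: it simply cites injectivity from \cite[3.6]{HeL_12} and then observes that source and target are (perfections of) affine spaces of the same finite dimension, so by \cite[Proposition 1.2(ii)]{HeL_12} the map is an isomorphism. This sidesteps all the commutator bookkeeping you propose. The integral cases $\breve\CU,\breve\CU^+$ then follow by taking the inverse limit over $r$, as you say.

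The genuine content of the proposition is the $\breve U$ case, and here your proposal has a gap. You assert that ``the Chevalley commutator relations imply that perturbations only touch strictly later affine roots in the order'' and that the $\breve U$ argument goes through ``verbatim''. But for $\breve U = U(\breve k)$ the parametrization is by \emph{ordinary} roots $\Phi^+$, not by affine roots, and there is no height function making commutators strictly increasing: if $\alpha,\beta\in\Phi^+$ with $\alpha+\beta\in\Phi^+$, there is no a priori reason for $\alpha+\beta$ to be ``later'' than both in your order. Your final paragraph correctly identifies this as the obstacle and says it ``reduces to a purely combinatorial statement'', but you do not prove that statement. The paper's actual contribution here is to supply exactly this missing ingredient: one must exhibit a filtration $\Phi^+=\Psi_r\supseteq\cdots\supseteq\Psi_1=\Phi^+\cap F^{-1}\Phi^-$ satisfying the closure and $F$-compatibility conditions of \cite[Lemma 5.7]{Ivanov_Cox_orbits}, and for the non-classical types $G_2,F_4,E_6,E_7,E_8,{}^3D_4,{}^2E_6$ this is done by an explicit SAGE computation (described in the appendix). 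No type-free proof along the lines you sketch is given in the paper; Remark~\ref{minimal} notes that Malten \cite{Malten_21} has such a proof by different methods, but that is a separate result.

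In short: your recursive-elimination idea is the right heuristic, but turning the triangularity claim into a proof for $\breve U$ is precisely the work, and the paper does it by case-checking rather than by the intrinsic argument you gesture at.
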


\begin{remark} \label{minimal}
    Using a different approach, Malten \cite{Malten_21} shows that Proposition \ref{St} holds for all minimal elliptic pairs $(T, U)$. We will not use this result in the paper.
\end{remark}

We use this result for $\BU_r^+$ in \S\ref{sec:irreducibility} to deduce the irreducibility of $H_{s_\chi}(Y,\cool)[\chi]$, and for $\breve U$ in \S\ref{sec:XTU} to prove the isomorphism of $X_{T,U}$ with the $p$-adic Deligne--Lusztig space from \cite{Ivanov_DL_indrep}.

\begin{proof} 
In any of the setups ($\BU_r,\BU_r^+,\breve \CU,\breve \CU^+,\breve U$), (1) is equivalent to (2) as in \cite[3.14]{HeL_12}, so it suffices to prove (1). By \cite[3.6]{HeL_12}, the map in (1) is always injective. 

In the setup with $\BU_r$ resp. $\BU_r^+$ the proposition follows from injectivity and \cite[Proposition 1.2(ii)]{HeL_12}, as the source and the target of the map in (1) are isomorphic to the (perfect) affine space over $\obF$ of the same finite dimension. By passing to the inverse limit over $r$, the proposition also follows in the setup with $\breve \CU,\breve \CU^+$.

It remains to handle the setup with $\breve U$, where we argue as in \cite[Proposition 5.3]{Ivanov_Cox_orbits}. By \cite[\S3.5]{HeL_12} it suffices to prove (1) for a single Coxeter element. By \cite[Lemma 5.5]{Ivanov_Cox_orbits}, it suffices to assume that the Dynkin diagram of $G$ is connected.  The cases when $G$ is classical were handled in \cite[Proposition 5.3]{Ivanov_Cox_orbits}, so it suffices to verify \cite[Lemma 5.7]{Ivanov_Cox_orbits} for the remaining types ($G_2$, $F_4$, $E_6$, $E_7$, $E_8$, ${}^3D_4$, ${}^2E_6$). That is, we must provide a filtration \[\Phi^+ = \Psi_r \supseteq \Psi_{r-1} \supseteq \dots \Psi_2 \supseteq \Psi_1 = \Phi^+ \cap F\i(\Phi^-),\] such that for each $i$, $\Psi_i$ and $\Psi_i \sm \Psi_1$ are closed under addition; the implication $\alpha,\beta \in \Psi_i$, $\alpha+\beta \in \Phi \Rightarrow \alpha+\beta \in \Psi_{i-1}$ holds for all $i>1$; and for all $i$, $F(\Psi_i \sm \Psi_1) \subseteq \Psi_i$. We do this using an algorithm implemented in SAGE \cite{sagemath}. It even turns out that it is always possible to arrange that $\#(\Psi_{i+1} \sm \Psi_i) = 1$. Our algorithm is explained in Appendix \ref{sec:appendix_sage}.
\end{proof}

\section{Review and some properties of $X_{T,U}$}
\label{sec:XTU}

Let $X_{T,U}$ be as in \eqref{eq:XTU}. Here we recall/prove some facts about it. As we explain below, if $T$ is elliptic there is an equivariant map $X_{T,U} \rar \dot X_{\dot w}(b)$ into a certain $p$-adic Deligne--Lusztig space from \cite[\S8]{Ivanov_DL_indrep}. If $(T,U)$ is Coxeter and if $G$ is classical in the sense of \cite[Definition 5.1]{Ivanov_Cox_orbits}, it was shown in \cite[Proposition 5.12]{Ivanov_Cox_orbits} that this map is an isomorphism. We prove in this section that this holds for all $G$.

\subsection{Comparison with the definition in \cite{Ivanov_DL_indrep}}  \label{sec:comparing_toXwb}

Assume that $T$ is elliptic. Assume that $G$ admits a (necessarily unique) unramified inner form $G_0$ over $k$ (the general case easily reduces to this by using a derived embedding of $G$ into a group with connected center). 
Then one can choose 
\begin{itemize}
\item a $k$-rational pinning $(T_0, B_0 = T_0 U_0)$ of $G_0$ with Weyl group $(W_0,S_0)$, 
\item an elliptic element $w \in W_0$,
\item a lift $\dot w \in N(T_0)(\breve k)$,
\end{itemize}
such that there is a $\breve k$-rational isomorphism $G \stackrel{\sim}{\rar} G_0$, identifying $T,B,U,W$ with $T_0,B_0,U_0,W_0$, and $F$ with $Ad(\dot w) \circ \sigma$ as an automorphism of $\breve G \cong \breve G_0$. Let $b \in \breve G$. In \cite[\S8]{Ivanov_DL_indrep}, the \emph{$p$-adic Deligne--Lusztig space} attached to the datum $(G_0, \dot w, b)$ is defined as the arc-sheaf on perfect $\obF$-schemes, 
\[
\dot X_{\dot w}(b) = \{x \in L(G_0/U_0) \colon x^{-1}b\sigma(x) \in L(U_0 \dot w U_0) \},
\]
where $L(\cdot)$ is the perfect loop functor as in \S\ref{sec:general_notation}. Note that $(g,t) \colon x \mapsto gxt$ defines an action of the locally profinite group $G(k) \times T(k)$ on this arc-sheaf, see \cite[\S8]{Ivanov_DL_indrep} for details. This seems to be a natural definition, most similar to classical Deligne--Lusztig varieties.

Note that $w$ equals the relative position of $U$ with $FU$. Thus $(T,U)$ is a Coxeter (resp. minimal elliptic) pair if and only if $w$ is a Coxeter (resp. minimal elliptic) element. 

Identifying $G$ with $G_0$ via the given isomorphism, we have the composition
\begin{align}\label{eq:comparison_XTU_pDL}
\nonumber X_{T,U} &\rar \{g \in \breve G_0 \colon g^{-1}\dot w\sigma(g) \in \dot w \breve U_0 \}/(\breve U_0 \cap {}^{w}\breve U_0)\\ &\stackrel{\sim}{\rar} \dot X_{\dot w}(\dot w),
\end{align}
given by $g \mapsto g (\breve U_0 \cap {}^{w}\breve U_0) \mapsto g\breve U_0$.
Just as was done in \cite[Proposition 5.12]{Ivanov_DL_indrep} for classical groups, we deduce from Proposition \ref{St}:

\begin{corollary}
Assume $(T,U)$ is a Coxeter pair. Then the map \eqref{eq:comparison_XTU_pDL} is a $G(k)\times T(k)$-equivariant isomorphism.
\end{corollary}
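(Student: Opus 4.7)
The plan is to adapt the argument used for classical $G$ in \cite[Proposition 5.12]{Ivanov_Cox_orbits}: the only ingredient needed beyond formal manipulation is the loop-Steinberg cross-section for $\breve U$, which in the excerpt is made available in full generality via Proposition \ref{St}. I will work at the level of $R$-points for $R \in {\rm Perf}_{\obF}$ (so at the level of functors on perfect $\obF$-algebras), and deduce the asserted isomorphism of sheaves / ind-perfect schemes from bijectivity of this functor together with a routine check of equivariance.

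First I would set up bookkeeping. After identifying $G$ with $G_0$ so that $F = \Ad(\dot w)\circ\sigma$ and $FU = {}^w U_0$, the defining condition $g^{-1}F(g) \in \overline U \cap FU$ rewrites, via $g^{-1}F(g) = g^{-1}\dot w\sigma(g)\dot w^{-1}$, as
\[
g^{-1}\dot w\sigma(g) \in (\overline{U_0} \cap {}^w U_0)\,\dot w \subseteq \dot w\,\breve U_0,
\]
so the first arrow of \eqref{eq:comparison_XTU_pDL} is well-defined; the second arrow is obviously well-defined; and passing to the quotient by $\breve U_0 \cap {}^w\breve U_0$ uses only $\dot w^{-1}(\breve U_0 \cap {}^w\breve U_0)\dot w \subseteq \breve U_0$. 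The $G(k)\times T(k)$-equivariance is immediate from the fact that $T(k)$ normalizes $U_0$.

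Next I would exploit the identity $U_0\dot w U_0 \cdot \dot w^{-1} = U_0 \cdot {}^w U_0 = \breve U \cdot F\breve U$ (read in the loop functor) to translate the target condition $g^{-1}\dot w\sigma(g) \in U_0\dot w U_0$ into $g^{-1}F(g) \in \breve U \cdot F\breve U$. Proposition \ref{St}(2) applied to $\breve U$ then gives, for any such $g \in \breve G_0(R)$, a unique pair $(x,y) \in \breve U(R)\times(\overline{\breve U}\cap F\breve U)(R)$ with $g^{-1}F(g) = x^{-1}y\,F(x)$. Setting $g' := g x^{-1}$ yields $(g')^{-1}F(g') = y \in \overline U \cap FU$, so $g' \in X_{T,U}(R)$, and $g'\breve U_0 = g\breve U_0$; this establishes surjectivity. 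For injectivity, if $g_2 = g_1 u$ with $u \in \breve U(R)$ and both $g_i \in X_{T,U}(R)$, then $y_i := g_i^{-1}F(g_i) \in (\overline{\breve U}\cap F\breve U)(R)$ satisfy $y_2 = u^{-1} y_1 F(u)$; the two decompositions $(1,y_2)$ and $(u,y_1)$ of $y_2$ under the cross-section map then coincide by uniqueness in Proposition \ref{St}(2), forcing $u = 1$ and $g_1 = g_2$.

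I do not foresee any serious obstacle in this corollary itself: all the non-formal content has already been absorbed into Proposition \ref{St}, whose verification for the remaining exceptional and outer types required the SAGE-aided case analysis summarized in the appendix referenced above. Given that input, the corollary is a short and purely formal consequence, and the only point where one must be slightly careful is in keeping track of the two different $k$-rational structures on the common $\breve k$-torus (the $F$-rational structure for $T$ versus the $\sigma$-rational one for $T_0$), so that the $T(k)$-action matches on both sides of \eqref{eq:comparison_XTU_pDL}.
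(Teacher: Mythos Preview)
Your proposal is correct and follows exactly the approach the paper intends: the paper's own proof is a one-line reference to \cite[Proposition 5.12]{Ivanov_Cox_orbits} together with Proposition \ref{St}, and you have faithfully spelled out that argument, using the loop Steinberg cross-section for $\breve U$ to produce a preimage (surjectivity) and to force $u=1$ from the uniqueness of the decomposition $y_2 = u^{-1}y_1F(u)$ (injectivity). There is nothing to add or correct.
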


\subsection{Integral decomposition of $X_{T,U}$}

A priori, $X_{T,U}$ is a huge ind-scheme, which is hard to control. However, in the Coxeter case, it has the following decomposition. Let $X$ be as in \eqref{eq:X} and note that $X \subseteq X_{T,U}$ is a closed subscheme. Surprisingly, it is also open and the following holds.

\begin{theorem}[\cite{Ivanov_Cox_orbits},\cite{Nie_23}] \label{decomposition}
    Suppose $(T,U)$ is a Coxeter pair and let $X$ be as in \eqref{eq:X}. Then there is a decomposition 
    \[
    X_{T,U} = \bigsqcup_{g \in G(k)/\CG(\CO_k)} g X.
    \] 
    In particular, $X_{T,U}$ is a disjoint union of affine perfect $\overline \BF_q$-schemes.
\end{theorem}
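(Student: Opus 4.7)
The strategy will be to establish a $G(k)$-equivariant isomorphism
\[
G(k) \times^{\CG(\CO_k)} X \stackrel{\sim}{\rar} X_{T,U}, \qquad (g_0, h) \mapsto g_0 h,
\]
whose source, after choosing coset representatives, is precisely the disjoint union asserted by the theorem. I would split the verification into the standard three parts: well-definedness, injectivity, and (the hard one) surjectivity.

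Well-definedness and injectivity are elementary. Since $G(k)$ consists of $F$-fixed elements, left multiplication by $g_0\in G(k)$ preserves the defining condition $g^{-1}F(g)\in\overline U\cap FU$, so $G(k)$ acts on $X_{T,U}$ by left translation; the subgroup $\CG(\CO_k)=G(k)\cap\breve\CG$ preserves $X$, so the induced map is well defined. For injectivity, if $g_0 h = g_0' h'$ with $h,h'\in X$ and $g_0,g_0'\in G(k)$, then $g_0'^{-1}g_0=h'h^{-1}\in\breve\CG\cap G(k)=\CG(\CO_k)$.

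The substantive step is surjectivity: every $g\in X_{T,U}$ should satisfy $g\in G(k)\cdot\breve\CG$. My plan is to reduce this to an $F$-fixed point statement in the affine flag variety. Consider $\pi\colon X_{T,U}\to G(\breve k)/\breve\CG$, $g\mapsto g\breve\CG$. Since $\CG$ is smooth with connected special fiber, Lang's theorem gives
\[(G(\breve k)/\breve\CG)^F \;=\; G(k)/\CG(\CO_k),\]
so surjectivity reduces to showing that $\pi(g)$ is $F$-fixed, i.e.\
\[b := g^{-1}F(g)\ \in\ \breve\CG.\]
Once this is known, writing $g = g_0 h$ with $g_0\in G(k)$ and $h\in\breve\CG$, one has $h^{-1}F(h)=b\in\breve\CG\cap(\overline U\cap FU)=(\overline\CU\cap F\CU)(\CO)$ by the standard Bruhat--Tits identification of parahoric intersections with root subgroups, hence $h\in X$.

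The main obstacle will be the integrality claim $b\in\breve\CG$. A priori $b$ lies only in $\overline U\cap FU\cong\prod_{\alpha\in\Delta}U_\alpha$, with components $b_\alpha\in U_\alpha(\breve k)$ of unrestricted valuation. The Coxeter hypothesis plays the decisive role here: each $F$-orbit in $\Phi$ has length exactly $N$ and meets $\Delta$ in precisely one element, so the iterated product
\[
b\cdot F(b)\cdots F^{N-1}(b) \;=\; g^{-1}F^N(g)
\]
distributes its components across the entire root system. The plan is a careful valuation bookkeeping at the base point $\bfx$: a root-component $b_\alpha$ of too-negative valuation would, after successive Frobenius twists, propagate a non-integrality that is incompatible both with $F^N U = U$ and with $g\in G(\breve k)$, forcing each $b_\alpha$ into the bounded subgroup $\breve U_{\alpha+m_\alpha}=U_\alpha(\breve k)\cap\breve\CG$. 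This combinatorial rigidity is exactly the feature that singles out Coxeter pairs and is at the heart of the proofs in \cite{Ivanov_Cox_orbits} and \cite{Nie_23}; once it is in hand, all other pieces of the argument are formal.
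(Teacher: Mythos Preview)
The paper does not contain a proof of this theorem; it is quoted from \cite{Ivanov_Cox_orbits} and \cite{Nie_23} without argument, so there is no in-paper proof to compare against directly.

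Your formal reductions are correct and standard: well-definedness and injectivity of the map $G(k)\times^{\CG(\CO_k)} X \to X_{T,U}$ are elementary, and reducing surjectivity via Lang's theorem on $G(\breve k)/\breve\CG$ to the single claim $b:=g^{-1}F(g)\in\breve\CG$ is the right shape.

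The gap is in your sketch of that integrality step. You assert that a component $b_\alpha$ of too-negative valuation would ``propagate a non-integrality that is incompatible both with $F^N U = U$ and with $g\in G(\breve k)$.'' Neither of these is a constraint: $F^N U = U$ is a statement about the subgroup $U$, not about the element $g^{-1}F^N(g)$; and $g\in G(\breve k)$ is the bare hypothesis, imposing no boundedness on $g$ at all. The identity $b\,F(b)\cdots F^{N-1}(b)=g^{-1}F^N(g)$ therefore carries no information by itself --- the right-hand side can be any element of $G(\breve k)$. As written, the argument does not get off the ground: you have not exhibited anything that a large negative valuation of $b_\alpha$ would actually contradict.

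The proofs in the cited references do not proceed by valuation bookkeeping on $b$ alone. In \cite{Ivanov_Cox_orbits} the argument passes through the comparison with $p$-adic/affine Deligne--Lusztig varieties (cf.\ \S\ref{sec:comparing_toXwb} here) and relies essentially on the loop-group Steinberg cross-section, i.e.\ Proposition~\ref{St} for $\breve U$; \cite{Nie_23} gives an independent treatment. You have correctly located where the difficulty lies, but the mechanism you propose for resolving it does not work.
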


This reduces the study of the cohomology of $X_{T,U}$ to that of $X$.

\subsection{Drinfeld stratification}\label{sec:Drinfeld_strat}

In this subsection, the ellipticity assumption on $T$ can be dropped. Note that the projection $\BG \rar \BG_{0+}$ restricts to a projection 
\[X \rar X_{0+} = \{g \in \BG_{0+} \colon g^{-1}F(g) \in \overline\BU_{0+} \cap F\BU_{0+}\}\] 
over (a variant of) a classical Deligne--Lusztig variety. Let $\fkL$ denote the set of all twisted Levi subgroups of $\BG_{0+}$ containing $\BT_{0+}$. For any $\BL_{0+} \in \fkL$ we have the locally closed $\BG_{0+}^F \times \BT_{0+}^F$-stable closed subscheme
\[
X_{0+}^{(\BL_{0+})} = \{ g \in \BG_{0+} \colon g^{-1}F(g) \in \BL_{0+} \cap \overline\BU_{0+} \cap F\BU_{0+} \}
\]
Pulling back to $X$ we obtain a closed subscheme $X^{(\BL_{0+})} \subseteq X$. Following \cite{CI_DrinfeldStrat} and \cite[\S6.2]{ChanO_21}, we then call
\[
X^{(\BL_{0+})} \sm \bigcup_{\BL_{0+}' \subseteq \BL_{0+} \in \fkL} X^{(\BL'_{0+})}
\]
a \emph{Drinfeld stratum} of $X$. This defines a finite and locally closed stratification of $X$. Its has a unique minimal/closed stratum $X^{(\BT_{0+})}$.

\begin{lemma}\label{lm:decomposition_closed_Drinfeld_stratum}
With $Y$ as in \eqref{eq:Xplus}, we have $X^{(\BT_{0+})} = \bigsqcup_{g \in \BG_{0+}^F/\BT_{0+}^F} g (X \cap \BT \BG^+)  = \bigsqcup_{g \in \BG_{0+}^F} g Y$.
\end{lemma}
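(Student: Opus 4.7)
The plan is to prove both equalities by combining Lang--Steinberg for $\CG^+$ with a direct comparison of the defining conditions of $X$ and $Y$. The two key ingredients are the identifications $X_{0+}^{(\BT_{0+})} = \BG_{0+}^F$ (so that $X^{(\BT_{0+})}$ is the preimage of $\BG_{0+}^F$ in $X$) and $X \cap \breve\CG^+ = Y$. Once these are in place, the decomposition follows from choosing lifts of $\BG_{0+}^F$ to $\CG(\CO_k)$.

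For the first identification, $\BT_{0+}$ is a torus and $\overline\BU_{0+}$ is unipotent, so $\BT_{0+} \cap \overline\BU_{0+} = 1$ and the defining condition $g^{-1}F(g) \in \BT_{0+} \cap \overline\BU_{0+} \cap F\BU_{0+}$ collapses to $g^{-1}F(g) = 1$. By the definition of the Drinfeld stratification, $X^{(\BT_{0+})}$ is then the preimage in $X$ of $\BG_{0+}^F$ under the reduction $\pi \colon \BG \twoheadrightarrow \BG_{0+}$. For the second identification, the affine-root description of $\CG^+$ gives $\CU \cap \CG^+ = \CU^+$, and applying the automorphism $F$ (which preserves $\CG^+$) yields $F\CU \cap \CG^+ = F\CU^+$, with the same argument on negative roots giving $\overline\CU \cap \CG^+ = \overline\CU^+$. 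Intersecting these identities, one obtains
\[ (\overline\CU \cap F\CU)(\CO) \cap \breve\CG^+ = (\overline\CU \cap F\CU^+)(\CO). \]
Since for $g \in \breve\CG^+$ we automatically have $g^{-1}F(g) \in \breve\CG^+$, the conditions defining $X$ and $Y$ agree on $\CG^+(\CO)$.

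Now I construct the decomposition. By Lang--Steinberg applied to the $F$-stable pro-unipotent $\CG^+(\CO)$, the map $\CG(\CO_k) \to \BG_{0+}^F$ is surjective with kernel $\CG^+(\CO_k)$; pick, for each $\bar h \in \BG_{0+}^F$, a lift $h \in \CG(\CO_k)$. For any $y \in Y$, the relation $F(h) = h$ gives $(hy)^{-1}F(hy) = y^{-1}F(y) \in (\overline\CU \cap F\CU^+)(\CO)$, so $hy \in X$ and $\pi(hy) = \bar h$, whence $hy \in X^{(\BT_{0+})}$. Conversely, for $g \in X^{(\BT_{0+})}$, lift $\pi(g) \in \BG_{0+}^F$ to $h \in \CG(\CO_k)$ and set $y := h^{-1}g \in \breve\CG^+$; then $y^{-1}F(y) = g^{-1}F(g) \in (\overline\CU \cap F\CU)(\CO)$, so the claim $X \cap \breve\CG^+ = Y$ forces $y \in Y$. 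Disjointness is immediate: if $hy = h'y'$ with $y,y' \in Y$, then $h^{-1}h' \in \CG(\CO_k) \cap \breve\CG^+ = \CG^+(\CO_k)$, so $\bar h = \overline{h'}$ in $\BG_{0+}^F$. This proves the second equality. The first follows by grouping the union by $\BT_{0+}^F$-cosets and noting that $\bigsqcup_{\bar t \in \BT_{0+}^F} tY = X \cap \BT\BG^+$ (lifting $\bar t$ to $t \in \CT(\CO_k)$ and using that $\bar g \in \BT_{0+}$ is equivalent to $g \in \BT\BG^+$). The whole argument is elementary bookkeeping given the two ingredients; the only step that requires genuine attention is the identification of subschemes $(\overline\CU \cap F\CU) \cap \CG^+ = \overline\CU \cap F\CU^+$, which is immediate from the affine-root picture.
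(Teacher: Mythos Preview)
Your argument is correct and follows essentially the same route as the paper: identify $X_{0+}^{(\BT_{0+})} = \BG_{0+}^F$ via $\BT_{0+} \cap \overline\BU_{0+} = 1$, then use surjectivity of $\CG(\CO_k) \to \BG_{0+}^F$ to see that each fiber is a translate of $Y$. The paper outsources the first equality to \cite[Lemma 3.3.3]{CI_DrinfeldStrat}, whereas you deduce it directly from the second by grouping over $\BT_{0+}^F$-cosets and checking $X \cap \BT\BG^+ = \bigsqcup_{\bar t \in \BT_{0+}^F} tY$; your explicit verification of $(\overline\CU \cap F\CU)(\CO) \cap \breve\CG^+ = (\overline\CU \cap F\CU^+)(\CO)$ via affine roots is a detail the paper leaves implicit.
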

\begin{proof}
The first equality is \cite[Lemma 3.3.3]{CI_DrinfeldStrat}. As $\BT_{0+} \cap \overline\BU_{0+} \cap F\BU_{0+} = 1$, the image of $X^{(\BT_{0+})}$ under $X \rar X_{0+}$ is contained in the finite subset $\BG_{0+}^F \subseteq X_{0+}$. By exploiting the $\BG^F$-action on $X$ and the surjectivity of $\BG^F \rar \BG_{0+}^F$, each fiber is a translate of $Y$.
\end{proof}

In the rest of the article we consider $Y$ and its cohomology. To approximate $Y$, consider for any $r \in \BZ_{>0}$ the affine perfect $\obF$-scheme
\[
Y_r = \{ g \in \BG_r^+ \colon g^{-1}F(g) \in \overline\BU_r\cap F\BU_r^+ \},
\]
equipped with $(\BG_r^+)^F \times (\BT_r^+)^F$-action, so that $Y = \prolim_r Y_r$. Similarly, we have the schemes $X_r^{(\BL_{0+})} \subseteq \BG_r$ approximating $X^{(\BL_{0+})}$.

Recall the set $\Delta$ from \S\ref{sec:Coxeter_pairs}. Let $\Phi^\red$ denote the set of those $\alpha \in \Phi$ for which $\alpha(\bfx) \in \BZ$, and let $\Delta^{\rm red} = \Phi^{\rm red} \cap \Delta$.

\begin{lemma}\label{lm:properties_of_Yr}
The scheme $Y_r$ is the perfection of an affine smooth scheme of dimension $r \cdot \#\Delta - \#\Delta^{\red} = \frac{1}{N} (r \cdot \#\Phi - \Phi^{\rm red})$.
\end{lemma}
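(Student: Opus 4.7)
The plan is to realize $Y_r$ as the preimage under the Lang map $L \colon \BG_r^+ \to \BG_r^+$, $g \mapsto g^{-1}F(g)$, of the subscheme $\overline\BU_r \cap F\BU_r^+$. Since $\BG_r^+$ is (the perfection of) a connected unipotent smooth affine $\BF_q$-group, Lang--Steinberg together with a tangent-space computation shows that $L$ is the perfection of a finite \'etale surjection. Consequently, $Y_r = L^{-1}\bigl(\overline\BU_r \cap F\BU_r^+\bigr)$ is the perfection of a smooth affine scheme with
\[
\dim Y_r \,=\, \dim\bigl(\overline\BU_r \cap F\BU_r^+\bigr),
\]
and the problem reduces to computing the right-hand side.

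The next step is to describe $\overline\BU_r \cap F\BU_r^+$ in terms of affine root subgroups. Choosing a linear ordering on the affine roots appearing, $\overline\BU_r$ (resp.\ $F\BU_r^+$) is a direct product of the one-dimensional subquotients $\breve U_f / \breve U_{f'}$ for $f = \alpha + m$ with $\alpha \in \Phi^-$ and $0 \leq f(\bfx) < r$ (resp.\ $\alpha \in F\Phi^+$ and $0 < f(\bfx) < r$). Intersecting and using $\Phi^- \cap F\Phi^+ = \Delta$, the set of affine roots contributing to $\overline\BU_r \cap F\BU_r^+$ is
\[
A \,:=\, \bigl\{\alpha + m : \alpha \in \Delta,\ m \in \BZ,\ 0 < (\alpha+m)(\bfx) < r\bigr\},
\]
where the strict positivity on the left originates from the factor $F\BU_r^+$. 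Thus $\overline\BU_r \cap F\BU_r^+$ is the perfection of an affine space of dimension $\#A$.

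The counting is now elementary. Write $(\alpha+m)(\bfx) = \alpha(\bfx_0 - \bfx) + m$; since $\bfx_0$ is hyperspecial, this value lies in $\BZ$ precisely when $\alpha \in \Phi^\red$. For $\alpha \in \Delta^\red$ exactly $r-1$ integers $m$ satisfy $(\alpha+m)(\bfx) \in (0, r)$, while for $\alpha \in \Delta \smallsetminus \Delta^\red$ the value is non-integral and exactly $r$ integers $m$ work. Summing,
\[
\#A \,=\, (r-1)\,\#\Delta^\red + r\,(\#\Delta - \#\Delta^\red) \,=\, r\cdot \#\Delta - \#\Delta^\red.
\]
The alternative form of the dimension follows from $\#\Phi = N\cdot\#\Delta$ together with $\#\Phi^\red = N\cdot\#\Delta^\red$; both hold because $F$ fixes $\bfx$ (hence preserves $\Phi^\red$) and every $F$-orbit in $\Phi$ has length exactly $N$, as noted in \S\ref{sec:Coxeter_pairs}.

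The main obstacle is not the dimension count itself but the second step: one must verify that $\overline\BU_r \cap F\BU_r^+$ genuinely factors as a direct product of the one-dimensional pieces indexed by $A$ (and not merely admits such an associated graded), and that the product decompositions of $\overline\BU_r$ and of $F\BU_r^+$ can be arranged compatibly enough that the intersection is transparent. This should follow from the standard Moy--Prasad analysis of affine root subgroups in $\BG_r$, but the bookkeeping is where the real work lies.
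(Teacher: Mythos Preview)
Your proposal is correct and follows essentially the same route as the paper: realize $Y_r$ as the pullback of $\overline\BU_r \cap F\BU_r^+$ under the Lang map of $\BG_r^+$, use that Lang is \'etale, and count the affine roots in $A$ exactly as you do. The only point the paper makes more precise is the deperfection step: rather than asserting that $\BG_r^+$ is the perfection of a smooth group, it cites \cite[Lemma~A.26]{Zhu_17} to produce a smooth $\BF_q$-group $\BH$ with perfection $\BG_r^+$, takes the reduced closed subgroup $\BW \subseteq \BH$ whose perfection is $\overline\BU_r \cap F\BU_r^+$, and then uses that perfection commutes with limits to identify $Y_r$ with the perfection of the pullback of $\BW$ along the Lang map of $\BH$. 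Your ``main obstacle'' about the product decomposition of $\overline\BU_r \cap F\BU_r^+$ is not treated any more carefully in the paper than you do; it is simply asserted (and is standard Moy--Prasad bookkeeping).
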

\begin{proof}  The last equality follows from the last sentense of \S\ref{sec:Coxeter_pairs}. Note that the if $\alpha \in \Delta^{\rm red}$ (resp. $\alpha \in \Delta \sm \Delta^{\rm red}$), then there are precisely $r-1$ (resp. precisely $r$) affine roots with vector part $\alpha$ appearing in $\overline\BU_r \cap F\BU_r^+$. Thus $\overline\BU_r \cap F\BU_r^+$ is isomorphic to the perfection of $\BA^{r\cdot\#\Delta - \#\Delta^{\red}}_{\obF}$.
Note that $Y_r$ is the pullback of $\overline\BU_r \cap F\BU_r^+$ under the Lang map $g\mapsto g^{-1}F(g)$ of $\BG_r^+$. By \cite[Lemma A.26]{Zhu_17}, there is a smooth algebraic group $\BH$ over $\BF_q$ with perfection $\BG_r^+$. Let $\BW \subseteq \BH$ be the (reduced) closed subgroup whose perfection is $\overline\BU_r \cap F\BU_r^+$. In particular, $\BW$ is necessarily isomorphic to $\BA^{r\cdot\#\Delta - \#\Delta^{\red}}_{\obF}$. Let $Y_r'$ be the pullback of $\BW$ under the Lang map of $\BH$. As perfection commutes with limits, $Y_r$ is the perfection of $Y_r'$. As the Lang map is \'etale, the claim follows.
\end{proof}

\section{The minimal Drinfeld stratum}

In this section we assume that $\Phi / \<c \s_W\> \cong \D$ and $\#\Phi = N \cdot \#\D$, where $c$ and $\s_W$ are as in \S\ref{sec:Coxeter_pairs}. This condition is satisfied if $(T, U)$ is a Coxeter pair.

\smallskip

We will study the geometric and cohomological properties of $Y_r$ for $r \in \BZ_{>0}$.
To this end, we will study Deligne-Lusztig type constructions for various subquotient groups of $\BG_r^+$.

\subsection{A total order on affine roots}
For $f \in \tPhi$ we write $\a_f \in \Phi \sqcup \{0\}$ and $m_f \in \BZ$ such that $f = \a_f + m_f$. Let $\CO_f$ be the $F$-orbit of $f$. 

Let $\Phi_{\aff}^+$ (resp. $\tPhi^+$) be the set of affine roots $f \in \Phi_{\aff}$ (resp. $f \in \tPhi$) such that $f(\bx) > 0$. Note that $\tPhi^+ = \Phi_\aff^+ \sqcup \BZ_{\ge 1}$ and $\tPhi = \Phi_{\aff}^+ \sqcup \tPhi^0 \sqcup -\tPhi_{\rm aff}^+$. Here $\tPhi^0 = \{f \in \tPhi; f(\bx) = 0\}$.

Recall that $\D = \Phi^- \cap F\Phi^+$. Set $\Delta_\aff^+ = (\D \times \BZ) \cap \Phi_\aff^+$ and $\tD^+ = \D_{\aff}^+ \sqcup \BZ_{\ge 1}$.
\begin{lemma} \label{orbit}
    The map $f \mapsto \CO_f$ induces a bijection $\tD^+ \cong \tPhi^+ / \<F\>$.
\end{lemma}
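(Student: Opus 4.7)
The plan is to decompose the statement along the $F$-stable partition $\tPhi^+ = \Phi_\aff^+ \sqcup \BZ_{\ge 1}$, which matches $\tD^+ = \D_\aff^+ \sqcup \BZ_{\ge 1}$ on the source side. This partition is $F$-stable because $F$ preserves vector parts and fixes each $m \in \BZ_{\ge 0}$ pointwise (since $F(\breve T^m) = \breve T^m$, as the filtration on $\breve T$ is defined by a valuation). Thus the map $f \mapsto \CO_f$ restricts to a map on each summand, and it suffices to verify bijectivity separately on each.

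The torus component $\BZ_{\ge 1} \to \BZ_{\ge 1}/\<F\>$ is tautologically a bijection, since by the above each $m \in \BZ_{\ge 1}$ is $F$-fixed and so $\CO_m = \{m\}$.

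For the affine root component I will show that every $F$-orbit $\CO \subseteq \Phi_\aff^+$ meets $\D_\aff^+$ in exactly one element. Given $f \in \Phi_\aff^+$, projection to vector parts sends $\CO_f$ onto the $F$-orbit of $\a_f$ in $\Phi$. The section's standing assumption $\Phi/\<c\s_W\> \cong \D$ together with $\#\Phi = N \cdot \#\D$ forces each such orbit in $\Phi$ to have size exactly $N$ and to meet $\D$ in a unique element $\a$. Hence $|\CO_f| = N$ (the vector-part map $\CO_f \to \{F^j \a_f\}$ is an $F$-equivariant surjection between $\<F\>$-sets of size at most $N$), and the fiber over $\a$ in $\CO_f$ is a single element $f' = \a + m$ for some $m \in \BZ$. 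Since $\bx$ is the $F$-fixed parahoric point, the evaluation $g \mapsto g(\bx)$ is constant along $\CO_f$; therefore $f'(\bx) = f(\bx) > 0$, so $f' \in \Phi_\aff^+ \cap (\D \times \BZ) = \D_\aff^+$. Uniqueness of $f'$ in $\CO_f \cap \D_\aff^+$ is inherited from uniqueness of $\a$ in the vector-part orbit.

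Combining the two components yields the claimed bijection. The only substantive input is the orbit-size statement built into Section 5's standing hypothesis; once that is granted the argument is combinatorial bookkeeping, so I anticipate no real obstacle.
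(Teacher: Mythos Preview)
Your proof is correct and is precisely the unpacking of the paper's one-line proof, which simply reads ``This follows from our assumption on $(T,U)$ in this section.'' The only cosmetic point is that your parenthetical claim $|\CO_f|\le N$ is not justified at the moment you assert it; it follows immediately from the $\bx$-constancy you state two sentences later (two affine roots in $\CO_f$ with the same vector part have the same value at $\bx$, hence are equal), so you might reorder those observations.
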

\begin{proof}
This follows from our assumption on $(T, U)$ in this section.
\end{proof}

\begin{definition} \label{order}
We define a linear order $\le$ on $\tPhi^+$ such that
\begin{itemize}
    \item $f < f'$ if either (1) $f(\bx) < f'(\bx)$ or (2) $f(\bx) = f'(\bx)$, $f \in \BZ_{\ge 1}$ and $f' \in \D_\aff^+$;

    \item if $f_1, f_2 \in \tD^+$ such that $f_1 < f_2$, then $f_1' < f_2'$ for any $f_1' \in \CO_{f_1}$ and any $f_2' \in \CO_{f_2}$.

   \item $f < F(f) < \cdots < F^{N-1}(f)$ for $f \in \D_\aff^+$.
\end{itemize}   
\end{definition}

Let $f \in \tD^+$. We denote by $f+$ and $f-$ the descendant and the ascendant of $f$ in $\tD^+ \sqcup \{0\}$ respectively such that $0 = f-$ if $f = \min \tD^+$. Set $\tPhi^f = \{f' \in \tPhi^+; f' \ge f\}$.

\subsection{The variety $Y^A_B$} We fix an integer $r \in \BZ_{\ge 1}$. Let $\BG_r^+ = \breve\CG^{0+} / \breve\CG^r$ and let $\tPhi_r^+ = \{f \in \tPhi; 0 < f(\bx) < r\}$ be the set of affine roots appearing in $\BG_r^+$. 

Let $f \in \tPhi^+$. If $f \in \Phi_\aff^+$, we take $\BA_f = \BG_a$ and define $u_f: \BA^1 \to \BG_r^+$ by $x \mapsto U_{\a_f}([x]\varpi^{m_f})$ for $x \in \overline \BF_q$. If $f \in \BZ_{\ge 1}$, we take $\BA_f = V := X_*(T) \otimes \overline \BF_q$ and define $u_f: \BA_f \to \BG_r^+$ by $\l \otimes x \mapsto \l(1 + [x] \varpi^{n_f})$ for $\l \in X_*(T)$ and $x \in \overline \BF_q$. 

We define an abelian group $\BA[r] = \prod_{f \in \tPhi_r^+} \BA_f$. Then we have an isomorphism of varieties \[u: \BA[r] \overset \sim \to \BG_r^+, \quad (x_f)_f \mapsto \prod_f u_f(x_f),\] where the product is taking with respect to the linear order $\le$ on $\tPhi^+$.

Let $E \subseteq \tPhi_r^+$. We set $\BA_E = \prod_{f \in E} \BA_f$, which is viewed as a subgroup group of $\BA[r]$ in the natural way. Denote by $p_E: \BA[r] \to \BA_E$ the natural projection. Using the identification $u: \BA[r] \cong \BG_r^+$ we define \[\pr_E = u \circ p_E \circ u\i: \BG_r^+ \to u(\BA_E).\] For $f \in \tPhi_r^+$ we put $p_f = p_{\{f\}}$ and $\pr_f = \pr_{\{f\}}$. By abuse of notation, we will identify $\pr_f: \BG_r^+ \to u(\BA_f)$ with $u\i \circ \pr_f: \BG_r^+ \to \BA_f$ freely according to the context.

Let $A, B \subseteq \tPhi^+$ be two subsets. We set \[A + B = \{f + f' \in \tPhi; f \in A, f' \in B\}.\] We say $A$ is closed if $A + A \subseteq A$ and $A + \BZ_{\ge 0} = A$. In this case, we denote by $\BG_r^A \subseteq \BG_r^+$ the subgroup generated by $u(\BA_f)$ for $f \in A$.

Suppose that $\tPhi^r \subseteq A, B \subseteq \tPhi^+$ are $F$-stable and closed such that $B \subseteq A$ and $A + B \subseteq B$. Then $\BG_r^B$ is a normal subgroup of $\BG_r^A$. The isomorphism $u: \BA[r] \overset \sim \to \BG_r^+$ restricts to an isomorphism $u_{A:B}: \BA_{A \setminus B} \overset \sim \to \BG_r^A / \BG_r^B$. So we get an embedding \[s_{A:B} = u \circ u_{A:B}\i : \BG_r^A/\BG_r^B \to \BG_r^+.\] We define \[Y_B^A = \{g \in \BG_r^A; g\i F(g) \in (\overline\BU_r \cap F\BU_r) \BG_r^B\} / \BG_r^B \subseteq \BG_r^A / \BG_r^B,\] which admits a natural action by $(\BG_r^A)^F \times (\BT_r^+ \cap \BG_r^A)^F$. Let $\chi: (\BT_r^+ \cap \BG_r^A)^F \to \overline \BQ_\ell^\times$ be a character. We denote by $H_c^i(Y_B^A, \overline \BQ_\ell)[\chi]$ the $\chi$-weight space of the $(\BT_r^+)^F$-action on $H_c^i(Y_B^A, \overline \BQ_\ell)$. For $f \in \tPhi_r^+$ we define \[\pi^{A:B}_f = u\i \circ \pr_{\CO_f} \circ L \circ s_{A:B}: \BG_r^A/\BG_r^B \to \BA_{\CO_f}.\] Here, for any $F$-stable sub-quotient group of $\BG_r^+$, we always denote by $L$ the Lang's self-map given by $g \mapsto g\i F(g)$.

\begin{proposition} \label{fiber}
    Let $\tPhi^r \subseteq A, B \subseteq \tPhi^+$ be $F$-stable and closed. Let $f \in B$ and $C = B \setminus \CO_f$. Suppose that $\tPhi^r \subseteq C$ is closed, $C + A \subseteq C$ and $\CO_f + A \subseteq C$. Then

    (1) if $f \in \D_\aff^+$, then the map $\psi = (q_f, \pr_f): Y_C^A \cong Y_B^A \times \BA_f$ is an isomorphism;

    (2) if $f \in \BZ_{\ge 1}$ (in which case $\BA_{\CO_f} = \BA_f = V$), then there is a Cartesian diagram \[ \xymatrix{
    Y_C^A \ar[d]_{q_f} \ar[r]^{\pr_f} &  \BA_f \ar[d]_{-L} \\
    Y_B^A \ar[r]^{\pi^{A:B}_f} &  \BA_f.} \] 
    
    Here $q_f$ denotes the natural projection.
\end{proposition}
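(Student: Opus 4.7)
The plan is to analyze the projection $q_f\colon Y_C^A \to Y_B^A$ fiberwise. Given $\bar g \in Y_B^A$, fix the lift $g_0 := s_{A:B}(\bar g) \in \BG_r^+$; every lift of $\bar g$ to $\BG_r^A/\BG_r^C$ then takes the form $g_0 u \BG_r^C$ for a unique $u \in \BG_r^B/\BG_r^C$. Since $B \smallsetminus C = \CO_f$ and both $B, C$ are $F$-stable and closed, the coordinate section $u\colon \BA[r] \overset\sim\to \BG_r^+$ identifies $\BG_r^B/\BG_r^C$ with $\BA_{\CO_f}$, and under this identification $\pr_f$ reads off the $f$-component of $u$.

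The key technical step is to expand $L(g_0 u) = u\i L(g_0) F(u)$ modulo $\BG_r^C$. Moving $u\i$ past $L(g_0) \in \BG_r^A$ produces commutator terms whose root contributions lie in $\CO_f + A$; by the hypotheses $\CO_f + A \subseteq C$, $C + A \subseteq C$ and $\tPhi^r \subseteq C$, every such term, including all Witt-vector correction terms in mixed characteristic, falls into $\BG_r^C$. The outcome is the clean identity
\[
L(g_0 u) \equiv L(g_0) \cdot \big( u\i F(u) \big) \pmod{\BG_r^C}.
\]
Writing $L(g_0) = v_0 b_0$ with $v_0 \in \overline\BU_r \cap F\BU_r$ and $b_0 \in \BG_r^B$, and letting $\bar b_0 \in \BA_{\CO_f}$ denote the image of $b_0$, the defining condition $L(g_0 u) \in (\overline\BU_r \cap F\BU_r)\BG_r^C$ translates into the requirement that the components of $\bar b_0 + u\i F(u) \in \BA_{\CO_f}$ at those $f' \in \CO_f$ which do \emph{not} belong to $\D_\aff^+$ vanish.

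For part (1), $f \in \D_\aff^+$, and by Lemma \ref{orbit} exactly one element of $\CO_f = \{f, F(f), \dots, F^{N-1}(f)\}$, namely $f$ itself, lies in $\D_\aff^+$. Writing $u = (x_0, \dots, x_{N-1})$ with $x_j$ the coordinate at $F^j(f)$, the map $u \mapsto u\i F(u)$ has $F^j(f)$-component (cyclically) of the form $F(x_{j-1}) - x_j$; the vanishing conditions at $j = 1, \dots, N-1$ recursively and uniquely determine $x_1, \dots, x_{N-1}$ from $x_0$, leaving $x_0 \in \BA_f$ as the only free parameter. The resulting bijection $(q_f, \pr_f)\colon Y_C^A \to Y_B^A \times \BA_f$ has polynomial inverse and is thus an isomorphism of perfect schemes. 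For part (2), $f \in \BZ_{\ge 1}$, so $\CO_f = \{f\}$ is a singleton and $\BA_{\CO_f} = V$; since $f$ is a torus affine root, no element of $\CO_f$ lies in $\D_\aff^+$, and the single constraint $\bar b_0 + u\i F(u) = 0$ in $V$, noting that $u\i F(u) = F(u) - u$ additively on $V$, rewrites as $\pi^{A:B}_f(\bar g) = -L(\pr_f(\tilde g))$, which is precisely the Cartesian square in the statement.

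The main obstacle is establishing the congruence highlighted in the second paragraph. All the closure and product hypotheses on $A$, $B$, $C$ are tailored precisely so that the commutator terms and Witt-vector corrections vanish modulo $\BG_r^C$, but the bookkeeping, across the layers of $\tPhi^+$ and the Teichm\"uller lift in mixed characteristic, must be done carefully. Once this reduction is in place, both parts of the proposition follow from a transparent calculation on the $F$-orbit $\CO_f$.
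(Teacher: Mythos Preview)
Your approach is correct and coincides with the paper's in its essential content: both arguments reduce to the observation that $\BG_r^B/\BG_r^C$ is central in $\BG_r^A/\BG_r^C$ (your congruence $L(g_0 u) \equiv L(g_0)\cdot u^{-1}F(u)$ is exactly statement~(a) in the paper), and then analyze the resulting constraint on the $\CO_f$-coordinates of $u$.

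The only presentational difference is that for part~(1) the paper writes down an explicit inverse $\phi$ (built from the components $z_i$ of $\pi^{A:B}_f(g)$ together with the free coordinate $y$) and verifies $\psi\circ\phi=\id$, $\phi\circ\psi=\id$ via Lemma~\ref{zero}, whereas you phrase the same computation as solving the recursion $x_j = (\bar b_0)_j + x_{j-1}^q$ for $j=1,\dots,N-1$. For part~(2) the paper observes that both vertical maps are \'etale $V^F$-torsors and checks only that the square commutes, while you derive the fiber-product description directly from the single constraint $\pi^{A:B}_f(\bar g)=-L(u)$. These are two ways of packaging the same calculation.
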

\begin{proof} By assumption, the map $u$ induces an identification $\BA_{\CO_f} \cong \BG_r^B/\BG_r^C$ as abelian groups. Moreover,

(a) $\BG_r^B/\BG_r^C$ lies in the center of $\BG_r^A/\BG_r^C$.  

Assume that $f \in \D_\aff^+$. We define a morphism $\phi: Y_B^A \times \BA_f \to Y_C^A$ as follows. Let $(g, y) \in Y_B^A \times \BA_f$. Write $\pi^{A:B}_f(g) = (z_i)_{1 \le i \le N} \in \BA_{\CO_f}$ with each $z_i \in \BA_{F^i(f)}$. We define \[\phi(g, y) = s_{A:B}(g) u(y) F(u(y)) \cdots F^{N-1}(u(y)) \prod_{1 \le i \le N-1} u(z_i) F(u(z_i)) \cdots F^{N-i-1}(u(z_i)).\] By (a) one checks that \[\phi(Y_B^A \times \BA_f) \subseteq Y_C^A\] and $\psi \circ \phi = \id$. Let $g \in Y_C^A$ and set $g' = \phi(\psi(g)) \in Y_C^A$. Then $\psi(g) = \psi(g')$, that is, $g\i g' \in \BA_{\CO_f \setminus \{f\}} \subseteq \BG_r^B / \BG_r^C$. As $g, g' \in Y_C^A$, it follows by (a) that \[L(g\i g') = L(g)\i L(g') \in \BA_f \subseteq \BG_r^B / \BG_r^C.\] Hence $g = g'$ by Lemma \ref{zero}. So $\phi \circ \psi = \id$ and (1) is proved.

Assume that $f \in \BZ_{\ge 1}$. As both vertical maps in the diagram are finite \'etale $V^F$-torsors, it suffices to show that the square commutes. Let $g \in Y_C^A$. Write $s_{A:C}(g) = u(x) u(y)$ with $x \in \BA_{A \setminus B}$ and $y \in \BA_f$. Then $\pr_f(g) = y$ and $q_f(g) = u(x)$. As $f \in \BZ_{\ge 1}$ and $g \in Y_C^A$, we have $\pr_f (L(s_{A:C}(g)) = 0 \in \BA_f$. Using (a) one computes that
\begin{align*} 
\pi_f^{A:B}(q_f(g)) &= \pr_f(L(u(x))) \\ &= \pr_f(L(s_{A:C}(g) u(y)\i)) \\
&= \pr_f(L(s_{A:C}(g)) L(u(y)\i)) \\ &= \pr_f(L(s_{A:C}(g))) \pr_f(L(u(y)\i)) \\
&= L(y\i) \\ &= - L(y).
\end{align*}
So (2) is proved.
\end{proof}

\begin{lemma} \label{zero}
    Let $f \in \D_\aff^+$ and let $x = (x_i)_{0 \le i \le N-1} \in \BA_{\CO_f}$ with each $x_i \in \BA_{F^i(f)}$ such that $L(x) \in \BA_f$. Then $x_i = F^i(x_0)$ for $1 \le i \le N-1$. In particular, (1) $L(x) = F^N(x_0) - x_0$ and (2) $x = 0$ if and only if $x_0 = 0$.
\end{lemma}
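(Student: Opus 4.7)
The plan is to simply unpack the Lang map on the abelian group $\BA_{\CO_f} = \prod_{i=0}^{N-1} \BA_{F^i(f)}$ and read off the conclusion component by component. Since $\BA_{\CO_f}$ is a product of copies of $\BG_a$ (and hence abelian), writing the group law additively, the Lang map becomes $L(x) = F(x) - x$.

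The key combinatorial input is how $F$ permutes the factors. Under the Coxeter/Frobenius assumption maintained in this section, the $F$-orbit $\CO_f$ has cardinality exactly $N$ (so $F^N(f) = f$), and $F$ maps $\BA_{F^i(f)}$ into $\BA_{F^{i+1}(f)}$ for $i < N-1$ and $\BA_{F^{N-1}(f)}$ into $\BA_{F^N(f)} = \BA_f$. Thus for $x = (x_0, \dots, x_{N-1})$ the element $F(x)$ has $i$-th coordinate $F(x_{i-1})$ with indices taken cyclically mod $N$.

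Now write $L(x) = y$ with $y \in \BA_f$, i.e.\ $y = (y_0, 0, \dots, 0)$. Comparing coordinates in $F(x) - x = y$ gives
\begin{align*}
F(x_{i-1}) - x_i &= 0 \quad \text{for } 1 \le i \le N-1, \\
F(x_{N-1}) - x_0 &= y_0.
\end{align*}
The first family of equations yields $x_i = F(x_{i-1})$ for $i \ge 1$, and by iteration $x_i = F^i(x_0)$, which is the main claim. Substituting into the last equation gives $y_0 = F^N(x_0) - x_0$, proving (1). Part (2) is then immediate: $x_0 = 0$ forces $x_i = F^i(0) = 0$ for all $i$, and the converse is trivial.

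There is essentially no obstacle here; the content of the lemma is purely book-keeping on coordinates once one recognizes that $\BA_{\CO_f}$ is an abelian group cyclically permuted by $F$ and that $L$ is just $F - \mathrm{id}$. The only thing to be careful about is the identification $F^N(f) = f$, which relies on the standing hypothesis $\#\Phi = N \cdot \#\Delta$ of this section.
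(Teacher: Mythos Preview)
Your proof is correct and follows exactly the same approach as the paper: write $L(x)=F(x)-x$ in coordinates, use that $F$ cyclically permutes the factors $\BA_{F^i(f)}$, and read off $x_i=F(x_{i-1})$ from the vanishing of the nonzero-indexed components. The paper's proof is simply a one-line compressed version of what you wrote.
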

\begin{proof}
    By definition we have \[L(x) = F(x) - x  = \sum_{i=0}^{N-1}F(x_{i-1}) - x_i \in \BA_{\CO_f},\] from which the lemma follows. 
\end{proof}

\subsection{Main result} For $f' \le f \in \tD^+$ we set $\BG_f^+ = \BG_r^+ / \BG_r^{\tPhi^f}$, $Y_f = Y^{\tPhi^+}_{\tPhi^f}$, $\BT_f = \BT_{\lceil f \rceil}$ and $\BT_f^{f'} = \ker(\BT_f \to \BT_{f'})$, where $\tPhi^f = \{f' \in \tPhi^+; f' \ge f\}$ and $\lceil f \rceil = \min \{n \in \BZ_{\ge 1}, n \ge f\}$. Note that $\BT_{f+}^{f}$ is nontrivial if and only if $f \in \BZ_{\ge 1}$, in which case $\BT_{f+}^f \cong V = X_*(T) \otimes \overline \BF_q$. 
\begin{theorem} \label{main}
    Assume that $p$ satisfies Condition \ref{hypo}. Let $f \in \tD^+$ and let $\chi: (\BT_f^+)^F \to \overline \BQ_\ell^\times$ be a character. Then there exists $s = s_{f, \chi} \in \BZ_{\ge 0}$ such that \[H_c^i(Y_f, \overline \BQ_\ell)[\chi] \neq 0 \Longleftrightarrow i = s,\] on which $F^N$ acts by multiplication by $(-1)^s q^{sN/2}$.
\end{theorem}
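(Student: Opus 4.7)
The plan is to proceed by induction on $f \in \tD^+$ with respect to the total order of Definition \ref{order}, using Proposition \ref{fiber} at each step to pass from $Y_f$ to $Y_{f+}$ with $A = \tPhi^+$, $B = \tPhi^f$, $C = \tPhi^{f+}$ (noting that $B \setminus C = \CO_f$ by Lemma \ref{orbit} and the definition of the order). The base case $f = \min \tD^+$ gives $\tPhi^f = \tPhi^+$, so $Y_f$ collapses to a single $\BF_{q^N}$-point and the claim holds trivially with $s = 0$ and $F^N$ acting as $1$.

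In Case (1), $f \in \D_\aff^+$, the torus level is unchanged ($\BT_{f+}^+ = \BT_f^+$), so $\chi$ may be regarded as a character of $(\BT_f^+)^F$. Proposition \ref{fiber}(1) gives $Y_{f+} \cong Y_f \times \BA_f \cong Y_f \times \BA^1$, and the K\"unneth formula yields
\[
H^i_c(Y_{f+}, \cool)[\chi] \cong H^{i-2}_c(Y_f, \cool)[\chi] \otimes H^2_c(\BA^1, \cool).
\]
Tracing the isomorphism $\phi$ constructed in the proof of Proposition \ref{fiber}(1), $F^N$ acts on the $\BA^1$-factor as $\mathrm{Frob}_{q^N}$, and hence as $q^N$ on $H^2_c$. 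Thus $s_{f+,\chi} = s_{f,\chi} + 2$ and the Frobenius eigenvalue transforms from $(-1)^{s_{f,\chi}} q^{s_{f,\chi} N/2}$ to $(-1)^{s_{f+,\chi}} q^{s_{f+,\chi} N/2}$, as required.

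In Case (2), $f \in \BZ_{\geq 1}$ and $\BA_f = V = X_\ast(T) \otimes \obF$, Proposition \ref{fiber}(2) exhibits $q_f: Y_{f+} \to Y_f$ as the $V^F$-torsor obtained by pulling back the Lang isogeny $-L: V \to V$ along $\pi_f^{A:B}: Y_f \to V$. Writing $\chi = (\chi_f, \chi_V)$ with $\chi_f$ the restriction to $(\BT_f^+)^F$ and $\chi_V$ a character of $V^F \cong (\BT_{f+}^f)^F$, standard torsor cohomology gives
\[
H^i_c(Y_{f+}, \cool)[\chi] \cong H^i_c\bigl(Y_f, (\pi_f^{A:B})^\ast \CL_{\chi_V}\bigr)[\chi_f],
\]
where $\CL_{\chi_V}$ is the Artin--Schreier/Lang local system on $V$ attached to $\chi_V$. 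If $\chi_V = 1$ the pullback is trivial and the inductive hypothesis applies directly to give concentration in degree $s_{f,\chi_f}$ with the correct eigenvalue. If $\chi_V \neq 1$, one invokes Condition \ref{hypo} via Lemma \ref{surj} to realize $\chi_V$ as a norm of a character coming from the coroot lattice of an appropriate $F$-stable twisted Levi subgroup $M \supsetneq T$ (determined by $\chi_V$); this allows one to identify $(\pi_f^{A:B})^\ast \CL_{\chi_V}$ with the pullback of a character sheaf on a DL-type variety attached to $M_\mathrm{der}$, to which the theorem can be applied inductively.

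\textbf{Main obstacle.} The hard part is Case (2) with $\chi_V \neq 1$: explicitly controlling $\pi_f^{A:B}$ enough to make the twisted-Levi reduction rigorous, and verifying that the resulting degree shifts assemble into the formula for $s_{f,\chi}$ predicted by the Howe factorization of $\chi$ (cf.\ Corollary \ref{cor:formal_degree}). Condition \ref{hypo} is precisely what lets Lemma \ref{surj} be invoked on each nested twisted Levi arising in this recursion.
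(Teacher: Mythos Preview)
Your overall inductive skeleton matches the paper's: induction with base case $Y_{\min\tD^+}=\{1\}$, Proposition~\ref{fiber}(1) for $f\in\D_\aff^+$, and the Cartesian square of Proposition~\ref{fiber}(2) together with the Lang torsor decomposition for $f\in\BZ_{\ge 1}$. The paper's Corollary~\ref{coh} is exactly your two bullet points.

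However, your treatment of Case~(2) with $\chi_V\neq 1$ is not yet a proof, and the gap is larger than your ``Main obstacle'' paragraph suggests. Three points:

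\textbf{(a) Double induction.} You set up induction on $f$ only. When $\chi_V\neq 1$ the reduction in the paper lands on the analogous variety $Y^M_{h+}$ for a \emph{proper} $F$-stable twisted Levi $M=M_\chi\subsetneq G$ at the \emph{same} level $h+$, not at a smaller $f$. So one needs a second, outer induction on $\#\Phi$ (equivalently on the semisimple rank), and a preliminary reduction to $G$ semisimple via Lemma~\ref{der} so that Lemma~\ref{surj} forces $M\neq G$ when $\chi_V$ is nontrivial.

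\textbf{(b) The pullback sheaf is not simply ``a character sheaf on a DL-variety for $M$''.} The map $\pi=\pi_h^{\tPhi^+:\tPhi^h}:Y_h\to V$ does not factor through anything attached to $M$ in a way that lets you quote the theorem for $M$ directly. The paper instead constructs a finite filtration $A_0\supseteq A_1\supseteq\cdots\supseteq A_m$ of $\tPhi^+$ by closed $F$-stable subsets with $A_m=B_m\cup\tPhi_M^+$, and at each stage applies Proposition~\ref{fiber} to factor $Y^{A_i}_h\cong Y^{A_i}_{B_i}\times\BA_{D_i^\flat}$ (Proposition~\ref{factor}). The key computation is that, on this product, $\pi$ decomposes as $\pi_0(\hat x)$ plus an explicit term of the form $\a_{f_i}^\vee\otimes(x_{f_i}^{q^N}-x_{f_i})y_{f_i^\flat}^{q^{n_i}}$ (for $i<m$) or $-\sum_j\a_{f_j}^\vee\otimes y_{f_j}^{q^{N/2}+1}$ (for $i=m$). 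These formulas are the content of Lemmas~\ref{pr1}, \ref{pr2} and Proposition~\ref{factor}, and they do not come for free.

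\textbf{(c) Quadratic, not linear, maps appear.} At the last step $i=m$ the affine factor carries the pullback of $\CL_{\chi_V}$ along the \emph{quadratic} map $x\mapsto\a^\vee\otimes x^{q^{N/2}+1}$, whose cohomology is concentrated in degree~$1$ with $F^N$ acting by $-q^{N/2}$ (Proposition~\ref{computation}(2), via Boyarchenko--Weinstein). This is what produces the odd degree shifts and the sign in the eigenvalue; a purely multiplicative-local-system argument on a torsor would miss it.

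In short: your plan is the right one, but the sentence ``identify $(\pi_f^{A:B})^*\CL_{\chi_V}$ with the pullback of a character sheaf on a DL-type variety attached to $M_{\rm der}$'' hides essentially all of \S\ref{subsec:red}, and needs the explicit combinatorics of the pairing $f\leftrightarrow f^\flat$ on $D$, the commutator calculations of Lemmas~\ref{pr1}--\ref{pr2}, and the cohomology input of Proposition~\ref{computation}.
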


After necessary preparations we prove Theorem \ref{main} in \S\ref{sec:proof_main}. We compute the cohomological degree $s_{\chi,r}$ explicitly in terms of the Howe factorization of $\chi$ in \S\ref{sec:computation_coho_degree}. A variety over a finite field is called \emph{maximal} in \cite{BoyarchenkoW_16}, if its number of rational points attains the Weil--Deligne bound given by its Betti numbers.

\begin{theorem} \label{torsor}
    Let $f: Z \to Y$ be an \'etale $\G$-torsor, where $\G$ is a finite group. Let $\Lambda$ be a ring. Assume that either $\Lambda$ is finite, or $Z, Y$ are irreducible and geometrically unibranch. Then \[f_! (\Lambda) = \bigoplus_\rho \rho \otimes \CE_\rho,\] where $\rho$ ranges over irreducible representations of $\G$ and $\CE_\rho$ is a local system on $Y$.
\end{theorem}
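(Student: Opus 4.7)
The plan is to exploit the natural $\Gamma$-action on $f_!\Lambda$ coming from the deck-transformation action of $\Gamma$ on $Z$ and then decompose according to the isotypic pieces for the regular representation of $\Gamma$.

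First, since $\Gamma$ is finite and $f$ is an \'etale $\Gamma$-torsor, $f$ is finite \'etale. Hence $f_! = f_\ast$ and $f_!\Lambda$ is a locally constant constructible sheaf of $\Lambda$-modules on $Y$, free of rank $\#\Gamma$ at every geometric stalk. The action of $\Gamma$ on $Z$ over $Y$ endows $f_!\Lambda$ with a $\Gamma$-action by sheaf automorphisms. The key geometric observation is that $f$ itself is a trivializing cover: the isomorphism $Z \times_Y Z \cong Z \times \Gamma$ yields a canonical identification $f^\ast f_!\Lambda \cong \Lambda[\Gamma]_Z$ under which $\Gamma$ acts by the (left) regular representation.

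Next, for each irreducible $\Lambda$-representation $\rho$ of $\Gamma$, define
\[
\CE_\rho \ := \ \Hom_{\Lambda[\Gamma]}(\rho, f_!\Lambda),
\]
viewing $\rho$ as the corresponding constant sheaf. There is a tautological evaluation map
\[
\bigoplus_\rho \rho \otimes_\Lambda \CE_\rho \ \longrightarrow \ f_!\Lambda,
\]
and to verify that it is an isomorphism one pulls back along $f$, where by the trivialization above the question reduces to the standard isotypic decomposition $\Lambda[\Gamma] \cong \bigoplus_\rho \rho \otimes_\Lambda \rho^\ast$ of the regular representation, which holds as soon as $\Lambda[\Gamma]$ is semisimple (i.e.\ under the tacit assumption that $\#\Gamma$ is invertible in $\Lambda$; in our applications $\Lambda$ is $\cool$ or $\overline{\BF}_\ell$ with $\ell \nmid \#\Gamma$).

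The main obstacle, and the only place where the two alternative hypotheses on $\Lambda$ and $Y,Z$ enter, is to ensure that each $\CE_\rho$ is indeed a genuine local system on $Y$ rather than merely a sheaf whose pullback along $f$ is constant. When $\Lambda$ is finite, the $\rho$-isotypic component is cut out by an idempotent in $\Lambda[\Gamma]$ acting on the constructible sheaf $f_!\Lambda$, and constancy of the ranks after pullback along the \'etale cover $f$ forces $\CE_\rho$ to be locally constant. When $\Lambda$ is infinite, the hypothesis that $Y$ and $Z$ are irreducible and geometrically unibranch is exactly what is needed for locally constant sheaves on $Y_{\mathrm{et}}$ to correspond to continuous representations of $\pi_1^{\mathrm{et}}(Y,\bar y)$; under this correspondence $f_!\Lambda$ becomes $\mathrm{Ind}_{\pi_1(Z)}^{\pi_1(Y)} \Lambda$, and the Hom-construction produces each $\CE_\rho$ tautologically as a $\pi_1^{\mathrm{et}}(Y)$-representation, hence as a local system on $Y$.
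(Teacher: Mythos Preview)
Your argument is correct and arrives at the same decomposition, but the paper organizes the proof differently and more uniformly. Rather than working sheaf-theoretically and splitting into cases at the end, the paper immediately passes to $\pi_1$-representations: it invokes the equivalence between locally constant $\Lambda$-sheaves on $Y_{\rm et}$ and continuous $\pi_1(Y)$-representations (the two alternative hypotheses --- $\Lambda$ finite, or $Y,Z$ irreducible and geometrically unibranch --- are exactly the two standard situations in which this equivalence is available), identifies $f_!\Lambda = f_*\Lambda$ with $\ind_{\pi_1(Z)}^{\pi_1(Y)}\mathbf{1}$, recognizes this as the inflation along $\pi_1(Y)\twoheadrightarrow\Gamma$ of the regular representation of $\Gamma$, and then decomposes the regular representation. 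In particular the paper's $\CE_\rho$ has a clean intrinsic description as the local system associated to the inflation of $\rho$, and the decomposition reads $f_!\Lambda \cong \bigoplus_\rho \CE_\rho^{\oplus \dim\rho}$. Your route via $\CE_\rho = \Hom_{\Lambda[\Gamma]}(\rho, f_!\Lambda)$ and trivialization after pullback along $f$ makes the $\Gamma$-equivariant structure and the idempotent decomposition more explicit, at the cost of handling the two hypotheses separately; the paper's route is shorter and treats them on the same footing. You correctly flag the tacit assumption that $\#\Gamma$ is invertible in $\Lambda$, which the paper's proof also uses without comment when it decomposes the regular representation.
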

\begin{proof}
The category of locally constant $\Lambda$-sheaves on $Z_{\rm et}$ is equivalent to the category of continuous $\pi_1(Z)$-representations on finite $\Lambda$-modules \cite[0GIY, 0DV5]{StacksProject}. The same holds for $Y$ and the functor $f_! = f_\ast$ correspond to induction of representations. Thus $f_!(\Lambda)$ corresponds to the $\pi_1(Y)$-representation $\ind_{\pi_1(Z)}^{\pi_1(Y)} 1_{\pi_1(Z)}$, which is equal to the inflation along $\pi_1(Y) \twoheadrightarrow \G$ of the regular $\G$-representation. The latter decomposes as $\bigoplus_{\rho \in {\rm Irr}(\G)} \rho^{\oplus \dim(\rho)}$. Thus, if $\CE_\rho$ denotes the local system on $Y$ corresponding to the inflation of $\rho$, we deduce $f_!(\Lambda) \cong \oplus_{\rho} \CE_\rho^{\oplus \dim(\rho)}$.
\end{proof}

\begin{proposition} \label{equi}
    Let $\G$ be a finite group. Suppose that $Z$ and $Y = Z \times \BA^1$ are $\G$-varieties, and the natural projection $\pi: Y \to Z$ is $\G$-equivariant. Then we have $H_c^i(Y, \overline \BQ_\ell) \cong H_c^{i-2}(Z, \overline \BQ_\ell)$ as $\G$-modules.
\end{proposition}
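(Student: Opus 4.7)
The plan is to apply the Leray spectral sequence for the projection $\pi \colon Y = Z \times \BA^1 \to Z$, while carefully tracking the $\G$-action on the direct image complex. By proper base change and the classical computation ($H_c^i(\BA^1,\cool) = \cool(-1)$ if $i=2$, and $0$ otherwise), the complex $R\pi_!\cool$ is concentrated in degree $2$, with $R^2\pi_!\cool$ a rank-one local system on $Z$. The product structure $Y = Z \times \BA^1$ provides a tautological trivialization $R^2\pi_!\cool \cong \cool(-1)$ of $\cool$-sheaves on $Z$, ignoring the $\G$-action.

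The main point to verify is that this trivialization is $\G$-equivariant when the right-hand side is endowed with the pullback $\G$-action on $Z$ (i.e.\ the trivial action on each stalk). Concretely, for $\gamma \in \G$ and $z \in Z$, the restriction of the action of $\gamma$ on $Y$ to the fiber $\pi^{-1}(z) = \{z\}\times\BA^1$ is an isomorphism onto $\pi^{-1}(\gamma z) = \{\gamma z\}\times\BA^1$; via the tautological trivializations of the two fibers, it becomes an automorphism $\phi_{\gamma,z}$ of $\BA^1$, necessarily of the form $t \mapsto at+b$. The key fact needed is that every such affine automorphism acts trivially on $H_c^2(\BA^1,\cool)$: one extends $\phi_{\gamma,z}$ to an automorphism $\wt\phi_{\gamma,z}$ of $\BP^1$ fixing $\infty$, which has degree $1$ and hence acts as the identity on $H^2(\BP^1,\cool) = \cool(-1)$; the triviality then transfers to $H_c^2(\BA^1,\cool)$ via the $\wt\phi_{\gamma,z}$-equivariant excision sequence for the open embedding $\BA^1 \hookrightarrow \BP^1$.

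Granted this, the trivialization $R^2\pi_!\cool \cong \cool(-1)$ is $\G$-equivariant, and the Leray spectral sequence (which degenerates because $R\pi_!\cool$ is concentrated in a single degree) yields the $\G$-equivariant isomorphism
\[
H_c^i(Y,\cool) \cong H_c^{i-2}(Z, R^2\pi_!\cool) \cong H_c^{i-2}(Z,\cool)(-1) \cong H_c^{i-2}(Z,\cool),
\]
the last identification being an equality of $\G$-modules (the Tate twist affects only the Galois action). The principal (and essentially only) non-formal step is the equivariance check of the trivialization, i.e.\ the fact that every automorphism of $\BA^1$ acts trivially on $H_c^2(\BA^1,\cool)$; everything else is a routine application of standard results on $\ell$-adic cohomology.
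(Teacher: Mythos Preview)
Your argument is correct, but it takes a more hands-on route than the paper's. The paper observes that it suffices to prove $R\pi_!\cool \cong \cool[-2]$ as $\G$-equivariant complexes, and does so in one line using the adjunction formalism: since $\pi$ is smooth of relative dimension $1$, one has $\pi^* \cong \pi^![-2]$, and then the counit $\pi_!\pi^! \to \id$ gives
\[
R\pi_!\cool \cong R\pi_!\pi^*\cool \cong R\pi_!\pi^!\cool[-2] \cong \cool[-2].
\]
The point is that every map in this chain is a canonical natural transformation, so $\G$-equivariance is automatic and no fiberwise analysis is needed.

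Your approach instead trivializes $R^2\pi_!\cool$ by hand and then verifies equivariance stalk by stalk, reducing to the statement that any automorphism of $\BA^1$ acts trivially on $H_c^2(\BA^1,\cool)$; your $\BP^1$-compactification argument for this is fine. This is more elementary in that it avoids the six-functor machinery, but it is longer and requires the explicit check that the paper's functorial argument bypasses entirely. Both proofs are valid; the paper's is just more efficient.
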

\begin{proof}
    It suffices to show $\pi_!(\overline \BQ_\ell) \cong \overline \BQ_\ell[-2]$ as $\G$-equivariant sheaves. Indeed, the adjunction map gives an isomorphism\[\pi_!(\overline \BQ_\ell) \cong \pi_!\pi^*(\overline \BQ_\ell) \cong \pi_!\pi^!(\overline \BQ_\ell [-2]) \cong \overline \BQ_\ell [-2]\] as $\G$-equivariant sheaves.  
\end{proof}

\subsection{Multiplicative local systems}  
Let $P$ be a commutative unipotent algebraic group defined over $\BF_q$. Then the map $\CL \mapsto t_\CL$ induces a bijection from the isomorphism classes of multiplicative local systems on $P$ to the set $\Hom(H(\BF_q), \overline \BQ_\ell^\times)$ of characters of $P(\BF_q)$. Here $t_\CL: P(\BF_q) \to \overline \BQ_\ell^\times$ is the trace-of-Frobenius function for $\CL$. See \cite[\S1.8]{BoyarchenkoD_10} for details. For $\th \in \Hom(P(\BF_q), \overline \BQ_\ell^\times)$ we denote by $\CL_\th$ the multiplicative local system corresponding to $\th$.

\begin{lemma} \label{base-change}
Let $\CL$ be a multiplicative local system on $P$. Then the base change of $\CL$ to $P_{\BF_{q^n}}$ (with $n \in \BZ_{\geq 1}$) corresponds to the character $t_\CL \circ \Nm_n$, where $\Nm_n(x) = x F_P(x) \cdots F_P^{n-1}(x)$ and $F_P$ denotes the Frobenius automorphism of $P$.    
\end{lemma}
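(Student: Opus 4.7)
The plan is to verify the identity of characters by computing the trace of geometric Frobenius on stalks at $\BF_{q^n}$-rational points, using the explicit Lang-torsor description of multiplicative local systems recalled in \cite[\S1.8]{BoyarchenkoD_10}.

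First I would recall that $\CL = \CL_\theta$ with $\theta = t_\CL$ is realized as the $\theta$-isotypic summand of $L_\ast\overline{\BQ}_\ell$, where $L \colon P \to P$, $x \mapsto F_P(x)\cdot x^{-1}$, is the Lang isogeny over $\BF_q$. This is a finite \'etale Galois cover with deck group $P(\BF_q)$ acting on the source by translations. Under the standard dictionary between Galois covers and local systems, the trace of $F$ on the stalk of $\CL$ at a point $y \in P(\BF_q)$ equals $\theta(\tau)$, where $\tau \in P(\BF_q)$ is the unique element with $F_P(\tilde y)\tilde y^{-1} = \tau$ for any lift $\tilde y \in L^{-1}(y)$. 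For $y \in P(\BF_q)$ this gives $\tau = y$, reproducing the normalization $t_{\CL_\theta} = \theta$ that characterizes the correspondence.

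The main computation is to upgrade this to $\BF_{q^n}$. Fix $y \in P(\BF_{q^n})$ and choose $\tilde y \in P(\overline{\BF}_q)$ with $L(\tilde y) = y$, which exists by Lang's theorem. In the commutative group $P(\overline{\BF}_q)$, a telescoping identity gives
\[
F_P^n(\tilde y)\cdot \tilde y^{-1} \;=\; \prod_{i=0}^{n-1} F_P^i\!\bigl(F_P(\tilde y)\tilde y^{-1}\bigr) \;=\; \prod_{i=0}^{n-1} F_P^i(y) \;=\; \Nm_n(y),
\]
and the right-hand side lies in $P(\BF_q)$ since $F_P^n(y) = y$ and $P$ is abelian. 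Therefore $F^n$ acts on the $P(\BF_q)$-torsor $L^{-1}(y)$ by translation by $\Nm_n(y)$, so on the one-dimensional $\theta$-isotypic stalk of $\CL$ at $\bar y$ it acts as multiplication by $\theta(\Nm_n(y)) = t_\CL(\Nm_n(y))$.

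To conclude, I would note that the character of $P(\BF_{q^n})$ attached to the base change $\CL|_{P_{\BF_{q^n}}}$ is by definition $y \mapsto \mathrm{Tr}\bigl(F_{\BF_{q^n}} \mid \CL_{\bar y}\bigr) = \mathrm{Tr}(F^n \mid \CL_{\bar y})$, which the previous step identifies with $(t_\CL \circ \Nm_n)(y)$, as claimed. There is no substantive obstacle here; the only thing to watch is fixing a consistent sign/inverse convention in the Galois-to-sheaves correspondence so that the $n=1$ case reproduces $t_{\CL_\theta} = \theta$ with the same sign as in \cite[\S1.8]{BoyarchenkoD_10}.
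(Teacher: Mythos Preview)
The paper states this lemma without proof, treating it as a standard fact about multiplicative local systems. Your argument via the Lang-torsor description and the telescoping identity $F_P^n(\tilde y)\tilde y^{-1} = \prod_{i=0}^{n-1} F_P^i(y) = \Nm_n(y)$ is the correct and canonical verification, and nothing more is needed.
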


For a character $\chi$ of $(\BT_{f+}^+)^F$ we denote by $\chi_{f+}^f$ the restriction of $\chi$ to $(\BT_{f+}^f)^F$. Proposition \ref{fiber} has the following consequence:

\begin{corollary} \label{coh}
    Let $f \in \tD^+$ and let $\chi$ be a character of $(\BT_{f+}^+)^F$.
    
    (1) if $f \in \D_\aff^+$, then $H_c^i(Y_{f+}, \overline \BQ_\ell)[\chi] \cong H_c^{i-2}(Y_f, \overline \BQ_\ell)[\chi]$;

    (2) if $f \in \BZ_{\ge 1}$, then $H_c^i(Y_{f+}, \overline \BQ_\ell)[\chi_{f+}^f] \cong H_c^i(Y_f, \pi^*(\CL_{\chi_{f+}^f}))$, and hence \[H_c^i(Y_{f+}, \overline \BQ_\ell)[\chi] \cong H_c^i(Y_f, \pi^*(\CL_{\chi_{f+}^f}))[\chi].\] Here $\pi = \pi^{\tPhi^+:\tPhi^f}_f$ and $H_c^i(Y_{f+}, \overline \BQ_\ell)[\chi_{f+}^f]$ is the $\chi_{f+}^f$-weight space of $(\BT_{f+}^f)^F$. 
\end{corollary}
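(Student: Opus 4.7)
The plan is to apply Proposition~\ref{fiber} with $A = \tPhi^+$, $B = \tPhi^f$, and $C = \tPhi^{f+}$. First I would verify $C = B \setminus \CO_f$: by the construction of the linear order in Definition~\ref{order}, the orbit $\CO_f$ occupies precisely the block of consecutive positions in $\tPhi^+$ corresponding to $f \in \tD^+$, so $\tPhi^f \setminus \CO_f = \tPhi^{f+}$. The requirements that $C$ be closed and $C + A \subseteq C$, $\CO_f + A \subseteq C$ are immediate since every $a \in \tPhi^+$ satisfies $a(\bx) > 0$, so sums strictly increase in the first coordinate of the order. With these identifications $Y_C^A = Y_{f+}$ and $Y_B^A = Y_f$, so Proposition~\ref{fiber} is available.

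For part~(1), $f \in \D_\aff^+$ implies $f \notin \BZ_{\ge 1}$, hence $\lceil f \rceil = \lceil f+ \rceil$ and $\BT_{f+}^f$ is trivial; in particular $(\BT_{f+}^+)^F = (\BT_f^+)^F$ and the character $\chi$ makes sense on both sides. Proposition~\ref{fiber}(1) produces a variety isomorphism $\psi = (q_f,\pr_f) : Y_{f+} \cong Y_f \times \BA_f$. Since the torus acts by right multiplication in $\BG_r^+$ and $q_f$ is a natural quotient map, $q_f$ is $(\BT_{f+}^+)^F$-equivariant; transporting the action along $\psi$ gives an action on the product that is compatible with projection to $Y_f$. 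Proposition~\ref{equi} then yields $H_c^i(Y_{f+},\overline\BQ_\ell) \cong H_c^{i-2}(Y_f,\overline\BQ_\ell)$ as $(\BT_{f+}^+)^F$-modules, and extracting the $\chi$-isotypic component finishes (1).

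For part~(2), $f \in \BZ_{\ge 1}$ gives $\BA_f = V$ and $\BT_{f+}^f \cong V$; the action of $(\BT_{f+}^f)^F = V^F$ on $Y_{f+}$ is precisely the fibre action of $q_f$. By Proposition~\ref{fiber}(2), $Y_{f+}$ sits in a Cartesian square with right vertical arrow the Lang-type map $-L : V \to V$, which is a $V^F$-torsor. Proper base change gives $q_{f!}\overline\BQ_\ell \cong \pi^\ast (-L)_!\overline\BQ_\ell$. Applying Theorem~\ref{torsor} to $-L$ decomposes $(-L)_!\overline\BQ_\ell \cong \bigoplus_\theta \theta \otimes \CL_\theta$, where $\theta$ ranges over the characters of $V^F = (\BT_{f+}^f)^F$ and $\CL_\theta$ is the multiplicative local system attached to $\theta$ via the normalisation $\CL \leftrightarrow t_\CL$ recalled in the subsection on multiplicative local systems. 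Extracting the $\chi_{f+}^f$-isotypic component and passing to cohomology delivers the first stated isomorphism. The second then follows by decomposing further with respect to the quotient $(\BT_{f+}^+)^F / (\BT_{f+}^f)^F \cong (\BT_f^+)^F$, the surjectivity on $F$-invariants being Lang's theorem applied to the connected unipotent group $\BT_{f+}^f$.

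The only delicate point is bookkeeping the sign: one must check that with the convention $\CL \leftrightarrow t_\CL$, the minus sign in $-L$ is absorbed so that the $\chi_{f+}^f$-weight space really pairs with $\CL_{\chi_{f+}^f}$ rather than with $\CL_{-\chi_{f+}^f}$; if necessary, this is re-indexed by replacing $\theta$ with $-\theta$ in the sum. Modulo this routine verification, the corollary is a formal combination of Proposition~\ref{fiber}, Proposition~\ref{equi}, and Theorem~\ref{torsor}.
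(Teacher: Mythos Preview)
Your proposal is correct and follows essentially the same route as the paper: verify the hypotheses of Proposition~\ref{fiber} for $A=\tPhi^+$, $B=\tPhi^f$, $C=\tPhi^{f+}$, then apply Proposition~\ref{equi} for part~(1) and proper base change plus Theorem~\ref{torsor} for part~(2). You are in fact slightly more careful than the paper, which silently writes $L_!$ where the Cartesian square has $-L$; your remark that this is absorbed by reindexing $\theta \mapsto -\theta$ is exactly the missing sentence.
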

\begin{proof}
    If $f \in \D_\aff^+$, by Lemma \ref{fiber} (1) we have $Y_{f+} \cong Y_f \times \BG_a$, and the natural projection $q_f: Y_{f+} \to Y_f$ respects the right actions of $(\BT_{f+}^+)^F = (\BT_f^+)^F$ on $Y_{f+}$ and $Y_f$. So the statement (1) follows from Proposition \ref{equi}.

    Now assume that $f \in \BZ_{\ge 1}$. Note that the Lang's map $L: \BT_{f+}^f \to \BT_{f+}^f$ is an \'{e}tale $(\BT_{f+}^f)^F$-torsor. It follows from Theorem \ref{torsor} that \[L_! (\overline \BQ_\ell) = \bigoplus_\th \CL_\th,\] where $\th$ ranges over characters of $(\BT_{f+}^f)^F$, and $\CL_\th$ is the multiplicative local system corresponding to $\th$. By the Cartesian diagram in Proposition \ref{fiber} (2), it follows from the base change theorem that \[ (q_f)_!(\overline \BQ_\ell) = (q_f)_! \pr_f^* (\overline \BQ_\ell) \cong \pi_f^* L_!(\overline \BQ_\ell) = \bigoplus_\th \pi_f^* \CL_\th.\] So the statement (2) follows by noticing that $(\BT_{f+}^f)^F$ acts on the sheaf $\CL_\th$ via the character $\th$.    
\end{proof}

From this we deduce:

\begin{corollary}\label{cor:transition_maps_Yr}
Let $\chi \colon \CT(\CO_k) \rar \cool^\times$ be a smooth character, which factors through $(\BT_r^+)^F$. Then for any $r_2 \geq r_1 \geq r$, the map 
$f_{r_2,!}\cool[\chi][2(\dim Y_{r_2} - \dim Y_{r_1})] \rar f_{r_1,!}\cool[\chi]$ is an isomorphism, where $f_{r_i} \colon Y_{r_i} \rar \Spec\obF$ is the structure map. With other words, \eqref{eq:transition_maps_isom} holds for the schemes $Y_r$.
\end{corollary}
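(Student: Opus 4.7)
The plan is to interpolate between $Y_{r_1}$ and $Y_{r_2}$ by elementary steps along the linearly ordered set $\tD^+$ of Definition \ref{order}, invoking Corollary \ref{coh} at each step. Fix the ambient parameter to be $r = r_2$. Since $r_1, r_2 \in \BZ_{\ge 1} \subseteq \tD^+$, the choices $f_0 := r_1$ and $f_1 := r_2$ yield $\tPhi^{f_0} \cap \tPhi_{r_2}^+ = \{f' : r_1 \le f'(\bx) < r_2\}$ and $\tPhi^{f_1} \cap \tPhi_{r_2}^+ = \emptyset$, so that $Y_{f_0} = Y_{r_1}$ and $Y_{f_1} = Y_{r_2}$. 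It therefore suffices to chain the elementary transition maps $q_f \colon Y_{f+} \to Y_f$ over all $f \in \tD^+$ with $r_1 \le f(\bx) < r_2$.

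For each elementary step I apply Corollary \ref{coh}. If $f \in \D_\aff^+$, Proposition \ref{fiber}(1) realizes $q_f$ as a trivial $\BA^1$-bundle compatible with the torus action, so $(q_f)_!\cool \cong \cool[-2]$ by Proposition \ref{equi}; this contributes a dimension increase of $1$ and a cohomological degree shift of $2$ on the $\chi$-weight space. If $f \in \BZ_{\ge 1}$, Proposition \ref{fiber}(2) presents $q_f$ as a $(\BT_{f+}^f)^F$-torsor. Since $f(\bx) = f \ge r_1 \ge r$, the subgroup $(\BT_{f+}^f)^F$ lies in the kernel of $(\BT^+)^F \to (\BT_r^+)^F$ through which $\chi$ factors by hypothesis, so the restriction $\chi_{f+}^f$ is trivial. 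Then $\CL_{\chi_{f+}^f}$ is the trivial local system, and Corollary \ref{coh}(2) together with Theorem \ref{torsor} yield $H_c^i(Y_{f+}, \cool)[\chi] \cong H_c^i(Y_f, \cool)[\chi]$ with no degree or dimension shift.

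Chaining these elementary isomorphisms produces $f_{r_2,!}\cool[\chi][2M] \cong f_{r_1,!}\cool[\chi]$, where $M = \#\{f \in \D_\aff^+ : r_1 \le f(\bx) < r_2\}$ counts the type-(a) steps. For each $\a \in \D$, the set $\{m \in \BZ : r_1 \le m - \a(\bx - \bx_0) < r_2\}$ has exactly $r_2 - r_1$ elements, since any half-open real interval of integer length $r_2 - r_1$ contains that many integers. Hence $M = (r_2 - r_1)\#\D = \dim Y_{r_2} - \dim Y_{r_1}$ by Lemma \ref{lm:properties_of_Yr}, matching the required shift.

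The hardest part I expect is verifying that the canonical transition map in \eqref{eq:transition_maps_isom} really agrees with the composition of elementary $(q_f)_!$ maps from Proposition \ref{fiber} and Theorem \ref{torsor}, preserving the required $(\BT^+)^F$-equivariance throughout. Extra care is needed in the type-(b) step to isolate the trivial-character summand of $(q_f)_!\cool$, but once this bookkeeping is in place no further geometric input beyond the results established in this section should be required.
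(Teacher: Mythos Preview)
Your proof is correct and follows exactly the approach the paper has in mind: the paper's own argument is literally the phrase ``From this we deduce'' placed immediately after Corollary \ref{coh}, so you have simply spelled out the chain of elementary steps $Y_f \to Y_{f+}$ and the dimension count that the paper leaves implicit. Your observation that the integer steps $f \in \BZ_{\ge 1}$ with $f \ge r$ contribute trivially (because $\chi_{f+}^f$ is trivial there) is precisely the point, and your count $M = (r_2 - r_1)\#\Delta$ via Lemma \ref{lm:properties_of_Yr} is correct.
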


\subsection{Reduction to the semisimple case} \label{subsec: derived}
Let $G' \subseteq G$ be the derived subgroup. Let $T'$ be a maximal torus of $G'$ contained in $T$. One can define the objects $Y_f' = Y_f$ for $G'$ in a similar way. 
\begin{lemma} \label{der}
    For $f \in \tD^+$ we have \[Y_f = \bigsqcup_{x \in (\BT_f^+)^F/ (\BT_f^{\prime +})^F} x Y_f' = \bigsqcup_{x \in (\BT_f^+)^F / (\BT_f^{\prime +})^F} Y_f' x\i.\] In particular, $H_c^i(Y_f, \overline \BQ_\ell) \cong \ind_{(\BT_f^{\prime +})^F}^{(\BT_f^+)^F} H_c^i(Y_f', \overline \BQ_\ell)$ as $(\BT_f^+)^F$-modules.
\end{lemma}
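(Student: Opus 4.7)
The plan is to extract from each $g \in Y_f$ an ``honest torus component'' $x \in (\BT_f^+)^F$ together with a remainder lying in $Y_f'$.

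First, I would record the semidirect product decomposition $\BG_r^+ = \BT_r^+ \ltimes \BU_r^+$, where $\BU_r^+$ is the closed normal subgroup generated by all affine root subgroups $u_f(\BA_f)$ with $f \in \Phi_{\aff} \cap \tPhi_r^+$; normality follows from the fact that $\BT_r^+$ normalizes each $U_\alpha$, and the disjoint-factor structure is visible from the product decomposition via the order on $\tPhi^+$. Crucially, since $G$ and $G'$ share the same root system and root subgroups, we have $\BU_r^+ \subseteq \BG_r^{\prime +}$. Passing to the quotient by $\BG_r^{\tPhi^f}$ gives a compatible decomposition $\BG_f^+ := \BG_r^+/\BG_r^{\tPhi^f} = \BT_f^+ \ltimes \BU_f^+$ with $\BU_f^+ \subseteq \BG_f^{\prime +} := \BG_r^{\prime +}/\BG_r^{\prime \tPhi^f}$, and the natural map $\BG_f^{\prime +} \hookrightarrow \BG_f^+$ is an inclusion restricting on torus parts to $\BT_f^{\prime +} \hookrightarrow \BT_f^+$.

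Given $g \in Y_f$, I would uniquely write $g = tu$ with $t \in \BT_f^+$ and $u \in \BU_f^+$. The torus projection $\BG_f^+ \twoheadrightarrow \BT_f^+$ is a group homomorphism vanishing on the image of $\overline\BU_r \cap F\BU_r$, so applying it to the Lang relation $g^{-1}F(g) \in \overline\BU_r \cap F\BU_r$ (in $\BG_f^+$) yields $t^{-1}F(t) = 1$, i.e.\ $t \in (\BT_f^+)^F$. Using this $F$-fixedness, one immediately computes $u^{-1}F(u) = g^{-1}F(g) \in \overline\BU_r \cap F\BU_r = \overline\BU_r' \cap F\BU_r'$, so $u \in Y_f'$ and $g \in t Y_f'$. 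For disjointness: if $xg_1 = yg_2$ with $x, y \in (\BT_f^+)^F$ and $g_i \in Y_f'$, then $y^{-1}x = g_2 g_1^{-1}$ lies in $\BT_f^+ \cap \BG_f^{\prime +} = \BT_f^{\prime +}$ (since $T \cap G' = T'$) and is $F$-fixed, so $y^{-1}x \in (\BT_f^{\prime +})^F$. Combining covering and disjointness yields $Y_f = \bigsqcup_{x \in (\BT_f^+)^F/(\BT_f^{\prime +})^F} xY_f'$; the symmetric decomposition $Y_f = \bigsqcup Y_f' x^{-1}$ is obtained by instead writing $g = u't$ and observing that $\BT_f^+$ normalizes $\overline\BU_r \cap F\BU_r$. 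The induced-representation formula for $H_c^i$ is then formal from the disjoint decomposition: the summands $H_c^i(xY_f', \cool) \cong H_c^i(Y_f', \cool)$ are permuted by $(\BT_f^+)^F$ via the coset action of $(\BT_f^+)^F/(\BT_f^{\prime +})^F$, giving exactly $\ind_{(\BT_f^{\prime +})^F}^{(\BT_f^+)^F} H_c^i(Y_f', \cool)$.

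I expect the main technical point to be verifying the semidirect decomposition cleanly and propagating it through both the quotient $\BG_r^+ \to \BG_f^+$ and the inclusion $\BG_r^{\prime +} \hookrightarrow \BG_r^+$; once that bookkeeping is in place, the Lang-condition manipulations are routine.
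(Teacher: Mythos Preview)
Your proposed semidirect decomposition $\BG_r^+ = \BT_r^+ \ltimes \BU_r^+$ does not exist. The isomorphism $u \colon \BA[r] \stackrel{\sim}{\to} \BG_r^+$ is only an isomorphism of \emph{varieties}, not of groups, so the ordered-product picture does not show that the subgroup generated by the affine root subgroups meets $\BT_r^+$ trivially. In fact it never does once $r \geq 3$: already for $G = \GL_2$ (hyperspecial level), with $a = 1 + \varpi E_{12}$ and $b = 1 + \varpi E_{21}$ one computes $[a,b] = 1 + \varpi^2(E_{11} - E_{22})$, a nontrivial element of $\BT_r^+$. Thus the ``torus projection $\BG_f^+ \twoheadrightarrow \BT_f^+$'' you invoke is not a group homomorphism, and the extraction $g = tu$ with $t \in (\BT_f^+)^F$, $u \in \BU_f^+$ cannot be carried out as written.

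The paper sidesteps this by not attempting any splitting of $\BG_f^+$. Instead it observes that $g^{-1}F(g)$ lies in $\BG_f^{\prime +}$ (since root subgroups belong to $G'$), applies Lang's theorem inside the connected group $\BG_f^{\prime +}$ to produce $g' \in \BG_f^{\prime +}$ with the same Lang image, and deduces $g = (gg'^{-1})g' \in (\BG_f^+)^F \cdot Y_f'$. The passage from $(\BG_f^+)^F$-cosets to $(\BT_f^+)^F$-cosets then uses the genuine group isomorphism $(\BT_f^+)^F/(\BT_f^{\prime +})^F \cong (\BG_f^+)^F/(\BG_f^{\prime +})^F$, which comes from $\BG_f^+/\BG_f^{\prime +} \cong \BT_f^+/\BT_f^{\prime +}$ (the root-subgroup contributions cancel in the quotient). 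Your disjointness and induction arguments at the end are fine once the covering is established, but the covering step needs Lang's theorem rather than an explicit complement.
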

\begin{proof}
    Let $g \in Y_f$. Then $g\i F(g) \in \overline\BU_f^+ \cap F\BU_f^+ \subseteq \BG_f^{\prime +}$. By Lang's theorem, there exists $g' \in \BG_f^{\prime +}$ such that ${g'}\i F(g') = g\i F(g)$. So $g = (g {g'}\i) g' \in (\BG_f^+)^F Y_f'$ and hence $Y_f = (\BG_f^+)^F Y_f'$. 
    
    On the other hand, there is a natural isomorphism \[ (\BT_f^+)^F / (\BT_f^{\prime +})^F \cong (\BG_f^+)^F / (\BG_f^{\prime +})^F.\] Now it follows that \[Y_f^+ = (\BG_f^+)^F Y_f' = \bigsqcup_{x \in (\BT_f^+)^F/ (\BT_f^{\prime +})^F} x Y_f^{\prime +} = \bigsqcup_{x \in (\BT_f^+)^F / (\BT_f^{\prime +})^F} Y_f^{\prime +} x\i,\] where the last equality follows from the observation that $(\BT_f^+)^F$ normalizes $Y_f'$.
\end{proof}

\subsection{Handling jumps in the Howe factorization of $\chi$} \label{subsec:red}
We fix a positive integer $h \le r$ and a character $\chi$ of $(\BT_{h+}^+)^F $. Recall that $\BT_{h+}^h \cong \BA_h = V = X_*(T) \otimes \overline \BF_q$, and recall from \S\ref{sec:condition_on_p} the norm map
\[
\Nm_N: V \to V, ~ v \mapsto v + F(v) + \cdots + F^{N-1}(v).
\]

Using the character $\chi$ we define a root system \[\Phi_\chi = \{\a \in \Phi; \chi \circ \Nm_N (\a^\vee \otimes \BF_{q^N}) = \{1\}\}.\] Note that $\Phi_\chi$ is $F$-stable. By \cite[Lemma 3.6.1]{Kaletha_19} it is a Levi subsystem of $\Phi$ (note that by Condition \ref{hypo}, $p$ is not a torsion prime for $\Phi$).

Let $M = M_\chi \subseteq G$ be the twisted Levi subgroup generated by $T$ and $U_\a$ for $\a \in \Phi_\chi$. Let $\tPhi_M$ be the set of affine roots of $M$. We set \[D = (\D_\aff^+ \cap \Phi_h^+) \setminus \tPhi_M = \{f \in \D_\aff^+ \setminus \tPhi_M; f < h\}.\] By Lemma \ref{orbit} the map $f \mapsto \CO_f$ gives a natural bijection \[D \overset \sim \to (\tPhi_h^+ \setminus \tPhi_M) / \<F\>.\]
Let $f \in D$. As $f < h$, it follows by Definition \ref{order} that $0 < f(\bx) < h$ and hence $h-f \in \tPhi_h^+ \setminus \tPhi_M$. Hence there exists a unique affine root $f^\flat \in \D_\aff^+$ such that $-f+h \in \CO_{f^\flat}$. In particular, $f^\flat \in D$ and $f(\bx) + f^\flat(\bx) = h$. We label all the affine roots in $D$ by \[f_1, \dots, f_{m-1}, f_m = f_m^\flat, \dots, f_n = f_n^\flat, f_{m-1}^\flat, \dots, f_1^\flat\] such that \[f_1(\bx) \le \cdots \le f_{m-1}(x) \le \frac{h}{2} = f_m(\bx) = \cdots = f_n(\bx) = \frac{h}{2} \le f_{m-1}^\flat(\bx) \le \cdots \le f_1^\flat(\bx),\] $f_i < f_i^\flat$ for $1 \le i \le m-1$ and $f_{m-1}^\flat < \cdots < f_1^\flat$.

Let $1 \le i \le m$. We set $D_i^\flat = \{f_j^\flat \in D; 1 \le j \le i\}$ if $1 \le i \le m-1$ and $D_i^\flat = \{f_j^\flat; 1 \le j \le n\}$ if $i=m$. Define \[A_i = \tPhi^+ \setminus \cup_{j=1}^{i-1} \CO_{f_j}, \quad B_i = \tPhi^h \cup \bigcup_{f \in D_i^\flat} \CO_f, \quad C_{i-1} = B_{i-1} \setminus \{h\}.\] Moreover, we set $A_0 = A_1 = \tPhi^+$, $B_0 = \tPhi^h$ and $C_0 = B_0 \setminus \{h\}$. Note that $A_m = B_m \cup \tPhi_M^+$ with $\tPhi_M^+ = \tPhi_M \cap \tPhi^+$.

\begin{lemma} \label{sum}
    Let $1 \le i \le m$. Then  $A_{i-1} + A_{i-1} \subseteq A_i$, $A_i + B_i \subseteq B_{i-1}$, $\tPhi_M^+ + B_i \subseteq C_{i-1}$, $C_{i-1} + C_{i-1} \subseteq C_{i-1}$ and $A_{i+1} + B_i \subseteq C_{i-1}$, where $A_{m+1} = B_{m-1} \cup \tPhi_M^+$. In particular, $A_i$, $B_i$ and $C_{i-1}$ are $F$-stable and closed.
\end{lemma}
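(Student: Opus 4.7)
The plan is to verify each of the five inclusions directly, using two combinatorial invariants on affine roots $f \in \tPhi^+$: the height $f(\bx) > 0$, which is additive on sums, and the Levi-membership (whether $f \in \tPhi_M$, equivalently $\alpha_f \in \Phi_\chi$). The structural inputs I will use are that $\tPhi_M$ is itself closed under $+$ (since $M$ is a Levi), that the duality $f \mapsto f^\flat$ on $D$ satisfies $f(\bx) + f^\flat(\bx) = h$, and that the labelling is monotone in height, namely $f_1(\bx) \le \cdots \le f_{m-1}(\bx) \le h/2$. The $F$-stability of $A_i, B_i, C_{i-1}$ is immediate, since each of the building blocks $\tPhi^+$, $\tPhi^h$, the orbits $\CO_{f_j}$ and $\CO_{f_j^\flat}$, the Levi system $\tPhi_M$, and the $F$-fixed element $h \in \BZ_{\ge 1}$ is $F$-stable.

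The pivotal step is inclusion (1): if $f, f' \in A_{i-1}$ and $f + f' \in \CO_{f_j}$ for some $j \le i-1$, then $f + f' \notin \tPhi_M$ has height $f_j(\bx) \le h/2$, so closedness of $\tPhi_M$ forces at least one summand outside $\tPhi_M$ with strictly smaller height; by monotonicity this summand lies in $\CO_{f_\ell}$ with $\ell < j \le i-1$, contradicting $A_{i-1}$-membership. Inclusion (2) is parallel: if $f \in A_i$, $f' \in B_i$, and $f + f' \in \CO_{f_i^\flat}$, then $f' \in \CO_{f_j^\flat}$ for some $j \le i$, and the height identity $f(\bx) = f_j(\bx) - f_i(\bx) \le 0$ contradicts $f \in \tPhi^+$. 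Inclusion (4) follows from a height count: every element of $C_{i-1}$ has height $\ge h/2$, so every sum has height $\ge h$, with equality forcing opposite vector parts; this is impossible for two elements of $\CO_{f_j^\flat}$ because $\Delta \cap (-\Delta) = \emptyset$, and impossible for a pair with $f' \in \tPhi^h$ because $f(\bx) > 0$ strictly inflates the sum above $h$. Inclusions (3) and (5) then combine the containments $\tPhi_M^+ \subseteq A_i$ and $A_{i+1} \subseteq A_i$ with (2) to land the sum in $B_{i-1}$, together with a separate verification that $f + f' = h$ is impossible: equality forces $\alpha_{f'} = -\alpha_f$, which is incompatible with $f \in \tPhi_M^+$ paired with $f' \in \CO_{f_j^\flat}$ (whose vector parts lie in the $F$-stable complement of $\Phi_M$), and incompatible with $f' \in \tPhi^h$ by positivity of $f(\bx)$.

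The $F$-stability and closedness assertions then follow formally. Closedness of each $X \in \{A_i, B_i, C_{i-1}\}$ under its own sum reduces via the chain $A_i \subseteq A_{i-1}$, $B_{i-1} \subseteq B_i$, $C_{i-1} \subseteq B_{i-1}$ (and the easy observation $B_i \subseteq A_i$ coming from the height separation between $\CO_{f_j^\flat}$ and $\CO_{f_k}$) to the five inclusions above. Closedness under $+\BZ_{\ge 0}$ follows because a positive integer shift $f \mapsto f + n$ preserves the vector part, so if $f + n \in \CO_{f_j}$ then the orbit of $f$ has representative $f_j - n \in D$ of strictly smaller height, which by monotonicity equals $f_{j'}$ for some $j' < j$ and hence is still in the removed list. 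The step I expect to be the main obstacle is the combinatorial housekeeping in (5): one must land in the strict subset $C_{i-1} = B_{i-1} \setminus \{h\}$ rather than $B_{i-1}$, so excluding the exceptional sum $f + f' = h$ when $f \in A_{i+1}$ may have vector part in either $\Phi_M$ or its complement requires a delicate case analysis on the $F$-orbit structure, using that the involution $\alpha \mapsto$ (orbit representative of $-\alpha$ in $\Delta$) permutes $\Delta \setminus \Phi_M$ within itself.
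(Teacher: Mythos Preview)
Your argument for inclusion (1) is correct, and your reductions of the closedness assertions to the five inclusions are fine. However, there is a genuine gap in your proof of inclusion (2), and a flawed step in (4).

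For (2), your argument is not ``parallel'' to (1) in the way you suggest. In (1) you exploit that $A_i$ is the \emph{complement} in $\tPhi^+$ of finitely many orbits, so it suffices to exclude $f+f' \in \CO_{f_j}$ for $j\le i-1$. But $B_{i-1}$ is a \emph{union} $\tPhi^h \cup \bigcup_{j\le i-1}\CO_{f_j^\flat}$; excluding the single orbit $\CO_{f_i^\flat}$ only shows $f+f' \in B_i \Rightarrow f+f' \in B_{i-1}$, and you never establish $f+f' \in B_i$. For instance, nothing you wrote rules out $f+f'$ landing in $\tPhi_M^+$ with height $<h$, or in some $\CO_{f_\ell}$. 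The paper handles this by splitting on whether $f \in \tPhi_M^+$. If $f \notin \tPhi_M^+$ then $f(\bx) \ge f_i(\bx)$ (since $f \in A_i$ forces the orbit representative of $f$ in $D$ to have index $\ge i$ or to be some $f_\ell^\flat$), and combined with $f'(\bx) \ge f_i^\flat(\bx)$ this gives $(f+f')(\bx) \ge h$, so $f+f' \in \tPhi^h$. If $f \in \tPhi_M^+$ then the Levi property forces $f+f' \notin \tPhi_M$, and the strict inequality $(f+f')(\bx) > f_i^\flat(\bx)$ places the orbit representative among the $f_\ell^\flat$ with $\ell < i$. This Levi/non-Levi dichotomy is the key mechanism you are missing.

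For (4), the appeal to $\Delta \cap (-\Delta) = \varnothing$ does not work: $\alpha_f$ and $\alpha_{f'}$ lie only in the $F$-orbits of elements of $\Delta$, not in $\Delta$ itself, and an $F$-orbit can certainly meet both $\Delta$ and $-\Delta$. The correct reason is the $\flat$-involution: if $f \in \CO_{f_j^\flat}$ and $f+f' = h$, then $f' = h-f \in \CO_{(f_j^\flat)^\flat} = \CO_{f_j}$; since $j \le i-1 \le m-1$ the labeling guarantees $f_j \neq f_\ell^\flat$ for all $\ell \le m-1$, so $\CO_{f_j}$ is disjoint from $C_{i-1}$, giving the contradiction. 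This is also the ``delicate case analysis'' you flagged for (5).
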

\begin{proof}
    We only show the second and the third inclusions. The others can be proved similarly. Let $f \in A_i$ and $f' \in B_i$ such that $f + f' \in \tPhi$.  
   
    First we assume that $f \in \tPhi_M^+$. Then $f+f' \notin \tPhi_M^+$ since $f' \notin \tPhi_M^+$. As \[(f+f')(\bx) > f'(\bx) \ge f_i^\flat(\bx) \ge h/2.\] we have $f + f' \in \cup_{f'' \in D_{i-1}^\flat} \CO_{f''} \subseteq C_{i-1} \subseteq B_{i-1}$ as desired. 
    
    Now we assume that $f \notin \tPhi_M^+$. Then $f(\bx) \ge f_i(\bx)$ and \[(f+f')(\bx) \ge f_i(\bx) + f_i^\flat(\bx) = h.\] By Definition \ref{order} we have $f + f' \in \tPhi^h \subseteq B_{i-1}$ as desired.
\end{proof}

Let $g \in \BG_r^+$, $x \in \BA[r]$ and $E \subseteq \tPhi_r^+$. We set $g_E = \pr_E(g) \in u(\BA_E)$, $x_E = p_E(x) \in \BA_E$ and $\hat x = u(x) \in \BG_r^+$. For $f \in \tPhi_r^+$ we will set $x_f = x_{\{f\}}$ and $x_{\ge f} = x_{\tPhi^f}$. We can define $g_f$ and $g_{\ge f} \in \BG_r^+$ in a similar way. By abuse of notation, we will identify $g_f \in u(\BA_f)$ with $u\i(g_f) \in \BA_f$ according to the context.
\begin{lemma} \label{pr1}
    Let $A, B \subseteq \tPhi^+$ such that $A + B \subseteq \tPhi^h$. Let $x \in \BA_A$ and $y \in \BA_B$. Then \[ (\hat y \hat x)_h = -\sum_f \a_f^\vee \otimes y_{h-f} x_f + y_h + x_h \in \BA_h = V,\] where $f$ ranges over $A$ such that $f < h-f$.
\end{lemma}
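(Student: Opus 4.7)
To prove Lemma \ref{pr1}, the plan is to expand the product
\[\hat y \hat x = \Bigl(\prod_{g \in B} u_g(y_g)\Bigr) \Bigl(\prod_{f \in A} u_f(x_f)\Bigr)\]
(with each inner product in the order $\le$) into the standard form $\prod_{f'' \in \tPhi_r^+} u_{f''}(z_{f''})$ and read off $z_h$. The key structural observation is that $u(\BA_h)$ is central in $\BG_r^+$ modulo the normal subgroup $\BG_r^{\tPhi^{>h}}$ generated by $\BA_{f''}$ with $f'' > h$: indeed, for any $f'' \in \tPhi_r^+$, the affine root $h + f''$ satisfies $(h+f'')(\bx) > h$ and hence by Definition \ref{order} sits in $\tPhi^{>h}$.

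Within $\hat y$ (resp.\ $\hat x$) the factors already appear in the order $\le$, so the normalization of $\hat y \hat x$ only requires swapping pairs $u_g(y_g)$, $u_f(x_f)$ with $g \in B$, $f \in A$ and $g > f$. By the hypothesis $A+B \subseteq \tPhi^h$, every such cross commutator $[u_g(y_g), u_f(x_f)]$ lies in $\BG_r^{\tPhi^{f+g}} \subseteq \BG_r^{\tPhi^h}$, and iterated (secondary) commutators introduced when moving these terms back through the product sit strictly higher in the filtration. In particular, only the first-order cross commutators, together with the direct torus factors $u_h(y_h)$ and $u_h(x_h)$, can contribute to the $h$-component.

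Next I would identify which first-order cross commutators produce a nontrivial $h$-part. By the Chevalley commutator relations in $\BG_r^+$, the $h$-piece of $[u_g(y_g), u_f(x_f)]$ vanishes unless $f + g = h$ exactly, i.e.\ $\alpha_f + \alpha_g = 0$ and $m_f + m_g = h$. Pairs with $\alpha_f = 0$ (both factors in the torus part) contribute nothing since the torus is abelian. For the remaining pairs, which satisfy $\alpha_g = -\alpha_f$, a direct rank-one SL$_2$ matrix computation gives
\[[U_{-\alpha}(b), U_{\alpha}(a)] \equiv \alpha^\vee(1 - ab) \pmod{\text{higher}},\]
which under the canonical isomorphism $\BT^h/\BT^{h+1} \cong V$ corresponds to $-\alpha^\vee \otimes xy$. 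Setting $a = [x_f]\varpi^{m_f}$ and $b = [y_{h-f}]\varpi^{m_{h-f}}$, this yields the contribution $-\alpha_f^\vee \otimes y_{h-f} x_f$. The requirement $g > f$ for a swap to be needed becomes $f < h - f$, which matches the range of summation in the statement.

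Finally, the ``diagonal'' terms $y_h$ and $x_h$ (understood to be zero when $h \notin B$, resp.\ $h \notin A$) come from the factors $u_h(y_h)$ and $u_h(x_h)$ originally present in $\hat y$ and $\hat x$; since $\BA_h$ is central modulo higher and its group law corresponds to addition in $V$, they combine to $y_h + x_h$. Assembling all contributions yields the stated formula. The main obstacle is the bookkeeping in the third step: one must verify that after extracting the $h$-piece of each contributing cross commutator and moving it into the position of $\BA_h$ in the standard order, every secondary commutator produced by this rearrangement lands safely in $\BG_r^{\tPhi^{>h}}$ and can be discarded.
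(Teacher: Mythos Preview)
Your approach is correct and essentially the same as the paper's: both identify the commutator fact $[\hat y_{h-f},\hat x_f]_h = -\alpha_f^\vee\otimes y_{h-f}x_f$ (your $\mathrm{SL}_2$ computation is exactly the paper's claim (a)) and argue that only first-order cross commutators with $f+g=h$ contribute, the diagonal torus terms giving $y_h+x_h$. The one point you flag as the ``main obstacle'' --- making the bookkeeping of secondary commutators rigorous --- is precisely what the paper handles by an explicit induction on the minimal element $f$ in the support of $x$: writing $\hat y_{f'}\hat x_f = \hat x_f\hat y_{f'}[\hat y_{f'}^{-1},\hat x_f^{-1}]$, one peels off the leftmost factor $\hat x_f$, collects the resulting commutators (which are in $\BG_r^{\tPhi^h}$ and whose $h$-parts are central modulo $\BG_r^{\tPhi^{h+}}$), and applies the induction hypothesis to $\hat y_{[f+,f']}\hat x_{>f}$.
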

\begin{proof}
    As $A + B \subseteq \tPhi^h$, for any $f \in A$ and $f' \in B$ we have $[\hat y_{f'}, \hat x_f] = \hat y_{f'} \hat x_f \hat y_{f'}\i \hat x_f\i \in \BG_r^h$. Moreover, one computes that  
    
    (a) $[\hat y_{f'}, \hat x_f]_h = \a_f^\vee \otimes y_{f'} x_f$ if $f + f' = h$, and $[\hat y_{f'}, \hat x_f]_h= 0$ otherwise. 
    
    Assume that $y = y_{\le f'}$ and $x = x_{\ge f}$ for some $f' \in B$ and $f \in A$. We argue by induction on $f$. If $f \ge f'$, the statement is trivial. Suppose that $f < f'$. Then we have \begin{align*}
        (\hat y \hat x)_h &= (\hat y_{< f'} \hat y_{f'} \hat x_f \hat x_{> f})_h \\ &= (\hat y_{< f'} \hat x_f \hat y_{f'} [\hat y_{f'}\i, \hat x_f\i] \hat x_{> f})_h \\ &=(\hat y_{< f'} \hat x_f \hat y_{f'} \hat x_{>f} )_h + [\hat y_{f'}\i, \hat x_f\i]_h \\ &~\vdots \\ &= (\hat y_{\le f} \hat x_f \hat y_{[f+, f']} \hat x_{>f})_h + \sum_{f'' \in [f+, f']} [\hat y_{f''}\i, \hat x_f\i]_h \\ &= (\hat y_{\le f} \hat x_f)_h + (\hat y_{[f+, f']} \hat x_{>f})_h + \sum_{f'' \in [f+, f']} [\hat y_{f''}\i, \hat x_f\i]_h
    \end{align*} where $[f+, f'] = \{f'' \in \tPhi^+; f+ \le f'' \le f'\}$. Now the statement follows from (a) and induction hypothesis.
\end{proof}

\begin{lemma} \label{pr2}
    Let $1 \le i \le m$. Let $x \in \BA_{A_i}$ and $y \in \BA_{B_i}$. Assume that $x \in \BA_{A_i \cap \tPhi_M}$ or $1 \le i \le m-1$. Then $(\hat x \hat y)_h = x_h + y_h \in \BA_h$.  
\end{lemma}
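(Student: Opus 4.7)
The plan is to adapt the proof of Lemma \ref{pr1}, computing $(\hat x \hat y)_h$ by successively rearranging the product $\hat x \hat y = u(x) u(y)$ into the standard form prescribed by $u$, while tracking which swaps can contribute to the $\BA_h$-component. Each elementary swap replaces an adjacent pair $u_{f}(x_f) u_{f'}(y_{f'})$ by $u_{f'}(y_{f'}) u_f(x_f) \cdot [u_f(x_f), u_{f'}(y_{f'})]^{-1}$, and the commutator lies in $\BG_r^{f+f'} \subseteq \BG_r^{B_{i-1}}$ by Lemma \ref{sum}. Its projection to $\BA_h = V$ vanishes unless $\a_f + \a_{f'} = 0$ and $m_f + m_{f'} = h$, in which case it produces a coroot term $\pm \a_f^\vee \otimes x_f y_{f'}$, exactly as recorded in part (a) of the proof of Lemma \ref{pr1}. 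Since $u(x)$ places $A_i$-factors to the left of $u(y)$'s $B_i$-factors, a swap is actually carried out only when we must move $u_{f'}$ past $u_f$, i.e.\ for pairs $(f, f') \in A_i \times B_i$ with $f > f'$ in the total order on $\tPhi^+$.

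It therefore suffices to rule out pairs $(f, f') \in A_i \times B_i$ satisfying simultaneously $\a_f + \a_{f'} = 0$, $m_f + m_{f'} = h$, and $f > f'$. Under the assumption $1 \le i \le m-1$, such a pair would give $f(\bx) + f'(\bx) = h$ with both heights positive. The case $f' \in \tPhi^h$ forces $f'(\bx) \ge h$ and hence $f(\bx) \le 0$, a contradiction. Otherwise $f' \in \CO_{f_j^\flat}$ for some $1 \le j \le i$, so $f \in \CO_{f_j}$; but $A_i$ excludes $\CO_{f_{j'}}$ for $j' \le i-1$, which forces $j = i$. The labeling convention $f_i < f_i^\flat$ for $1 \le i \le m-1$ together with Definition \ref{order} then yields $f < f'$, contradicting $f > f'$. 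Under the assumption $x \in \BA_{A_i \cap \tPhi_M}$, we have $\a_f \in \Phi_M$, hence $\a_{f'} = -\a_f \in \Phi_M$ and $f' \in \tPhi_M \cap B_i$. Since $D \subseteq \D_\aff^+ \sm \tPhi_M$ and $\tPhi_M$ is $F$-stable, each $\CO_{f_j^\flat}$ is disjoint from $\tPhi_M$; thus $\tPhi_M \cap B_i = \tPhi_M \cap \tPhi^h$, giving $f'(\bx) \ge h$ and again $f(\bx) \le 0$.

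The main obstacle in this plan is justifying that only the ``first-order'' swaps analyzed above can contribute to $\BA_h$, i.e.\ that nested commutators generated during the rearrangement do not themselves produce further coroot contributions at height $h$. This is exactly what the sum-closure inclusions $A_i + B_i \subseteq B_{i-1}$, $\tPhi_M^+ + B_i \subseteq C_{i-1}$, $C_{i-1} + C_{i-1} \subseteq C_{i-1}$ and $A_{i+1} + B_i \subseteq C_{i-1}$ of Lemma \ref{sum} are designed to ensure: once an initial commutator is trapped in $\BG_r^{B_{i-1}}$, all subsequent commutators produced during the rearrangement stay within $\BG_r^{C_{i-1}}$, and $C_{i-1} = B_{i-1} \sm \{h\}$ has no component at the integer position $h$. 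Granting this reduction, the two combinatorial verifications above close the argument.
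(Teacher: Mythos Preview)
Your combinatorial verification that no pair $(f,f') \in A_i \times B_i$ with $f > f'$ can satisfy $f + f' = h$ is correct and matches the paper's claim (a) closely. The gap is in the final paragraph: the reduction you grant does not follow from the $C_{i-1}$ inclusions as stated. Observe that $h \in A_i + B_i$ in general (take $f = f_i \in A_i$ and $f' = h - f_i \in \CO_{f_i^\flat} \subseteq B_i$), so Lemma \ref{sum} only traps the first-order commutator in $\BG_r^{B_{i-1}}$, not in $\BG_r^{C_{i-1}}$; and a subsequent swap with an $A_i$-factor lands in $A_i + B_{i-1} \subseteq A_i + B_i \subseteq B_{i-1}$, which again need not avoid $h$. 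So the inclusions you cite do not by themselves prevent nested commutators from contributing to $\BA_h$.

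The paper sidesteps this entirely with one extra elementary observation you omitted: every $f' \in B_i$ has $f'(\bx) \ge f_i^\flat(\bx) \ge h/2$, so whenever $f > f'$ we get $f(\bx) \ge f'(\bx)$ and hence $(f+f')(\bx) \ge h$. Thus the first-order commutator already lies in $\BG_r^h$; combined with your combinatorial argument that $f+f' \neq h$, this places it in $\BG_r^{h+}$, which is normal in $\BG_r^+$. No nested analysis is then required, and the induction proceeds exactly as in Lemma \ref{pr1}. Adding this single line about $\bx$-values would close your argument and make it essentially identical to the paper's.
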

\begin{proof}
    Assume that $x = x_{\le f}$ and $y = y_{\ge f'}$ for some $f \in A_i$ and $f' \in B_i$. We argue by induction on $f'$. If $f \le f'$, the statement is trivial. Assume that $f > f'$. We claim that

    (a) $f + f' > h$ if $f + f' \in \tPhi$.

    First note that $f(\bx) + f'(\bx) \ge 2f'(\bx) \ge 2f_i^\flat(\bx) \ge h$. Suppose that (a) does not hold. Then $f + f' = h$. Assume $x \in \BA_{A_i \cap \tPhi_M}$. Then $f \in \tPhi_M^+$ and $f + f' \in C_{i-1}$ by Lemma \ref{sum}, which is a contradiction. Assume $1 \le i \le m-1$. If $f \in \CO_{f_i}$, then $f' \in \CO_{f_i^\flat}$ and hence $f < f'$ by our choice that $f_i < f_i^\flat$, which is a contradiction. So $f \in A_{i+1}$ and $f + f' \in A_{i+1} + B_i \subseteq C_i$ by Lemma \ref{sum}, which is also a contradiction. So (a) is proved.

    By (a) we have $[\hat x_f\i, \hat y_{f'}\i] \in \BG_r^{h+}$. Hence
    \begin{align*} (\hat x \hat y)_h &= ((\hat x_{< f} \hat y_{f'} \hat x_f [\hat x_f\i, \hat y_{f'}\i]) \hat y_{>f'})_h \\ &= (\hat x_{<f} \hat y_{f'} \hat x_f \hat y_{>f'})_h \\ &~\vdots \\ &= (\hat x_{\le f'} \hat y_{f'} \hat x_{[f'+, f]} \hat y_{>f'})_h \\ &= (\hat x_{\le f'} \hat y_{f'})_h + (\hat x_{[f'+, f]} \hat y_{>f'})_h \\ &= (\hat x_{\le f'})_h + (\hat y_{f'})_h + (\hat x_{[f'+, f]} \hat y_{>f'})_h.
    \end{align*} Now the statement follows by induction hypothesis.
\end{proof}

We set $\pi = \pi_h^{\tPhi^+: \tPhi^h}: \BG_h^+ = \BG_r^+/\BG_r^h \to \BA_h \cong V$.
\begin{proposition} \label{factor}
    Let $1 \le i \le m$. Then there is an isomorphism \[\psi_i: Y^{A_i}_h \cong Y^{A_i}_{B_i} \times \BA_{D_i^\flat}.\] Moreover, for $(\hat x, y) \in Y^{A_i}_{B_i} \times \BA_{D_i^\flat}$ with $x \in \BA_{A_i \setminus B_i}$ we have 
    
    (1) if $1 \le i \le m-1$, then \[\pi(\psi_i\i(\hat x, y)) = \a_{f_i}^\vee \otimes (x_{f_i}^{q^N} - x_{f_i}) y_{f_i^\flat}^{q^{n_i}} + \pi(\psi_i\i(\hat x, 0)) \in V,\] where $0 \le n_i \le N-1$ such that $F^{n_i}(f_i^\flat) = -f_i + h$;

    (2) if $i=m$, then $\pi(\psi_i\i(\hat x, y)) = -\sum_{j=m}^n \a_{f_j}^\vee \otimes y_{f_j}^{q^{N/2}+1} + \pi(\psi_i\i(\hat x, 0))$. 
\end{proposition}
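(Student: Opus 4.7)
The plan is to first construct $\psi_i$ by iterated application of Proposition \ref{fiber}(1), and then extract the explicit formulas by applying Lemmas \ref{pr1}, \ref{pr2}, and \ref{zero} to an explicit lift of $\psi_i^{-1}(\hat x, y)$. Concretely, I enumerate the orbits $\CO_{f_j^\flat}$ for $f_j^\flat \in D_i^\flat$ in some convenient order (say, in decreasing order with respect to $\leq$) and peel them off one at a time: at each step we pass from $Y^{A_i}_{B}$ to $Y^{A_i}_{B \setminus \CO_{f_j^\flat}}$ and pick up a factor $\BA_{f_j^\flat}$. The three hypotheses of Proposition \ref{fiber}(1) --- closedness of $C := B \setminus \CO_{f_j^\flat}$ and the inclusions $C + A_i \subseteq C$, $\CO_{f_j^\flat} + A_i \subseteq C$ --- are all consequences of Lemma \ref{sum} (in particular, of the closedness of the $B_k, C_{k-1}$ together with the inclusion $A_i + B_i \subseteq B_{i-1}$, suitably restricted).

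For the formulas, iterating the inverse map from the proof of Proposition \ref{fiber}(1) expresses $\psi_i^{-1}(\hat x, y) \in \BG_r^{A_i}/\BG_r^{\tPhi^h}$ as a product of the form
\[
s_{A_i : B_i}(\hat x) \cdot \prod_{j} \Big( \prod_{k=0}^{N-1} F^k(u(y_{f_j^\flat})) \Big) \cdot (\text{corrections}),
\]
where the corrections arise from the $\pi^{A:B}_{f_j^\flat}$-components of $\hat x$ and are supported in strata strictly above the current bottom. Applying the Lang map $L$ and extracting the $h$-component via Lemma \ref{pr1} (for the products that contribute a nontrivial commutator at level $h$) together with Lemma \ref{pr2} (which rules out all other interactions), the $y$-dependence of $\pi(\psi_i^{-1}(\hat x, y)) - \pi(\psi_i^{-1}(\hat x, 0))$ splits into two types of contributions: (a) cross-terms pairing a $y$-factor with a factor of $L(s_{A_i:B_i}(\hat x))$, whose restriction to $\CO_{f_i}$ has the shape $(x_{f_i}^{q^N} - x_{f_i}, 0, \dots, 0)$ by Lemma \ref{zero} (since $\hat x \in Y^{A_i}_{B_i}$ forces $L(\hat x) \in \overline\BU_r \cap F\BU_r$ mod $\BG_r^{B_i}$); and (b) self-interaction terms among the $y$-factors (coming from $L(\hat y) = \hat y^{-1} F(\hat y)$). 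In both cases the sum of the relevant affine roots must land in $\CO_h = \{h\}$.

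The two cases of the proposition are then distinguished by which type of contribution survives. In case (1) ($1 \le i \le m-1$) contribution (b) vanishes, because $f_j^\flat(\bx) > h/2$ strictly for all $j \le m-1$, so any sum $F^a(f_j^\flat) + F^b(f_k^\flat)$ has $\bx$-value exceeding $h$; and among cross-terms (a) the equation $f + f' = h$ with $f' \in \CO_{f_j^\flat}$ forces $f \in \CO_{f_j}$, while $\CO_{f_j} \cap A_i = \emptyset$ for $j < i$, so only the $j=i$ term $\a_{f_i}^\vee \otimes (x_{f_i}^{q^N} - x_{f_i})\, y_{f_i^\flat}^{q^{n_i}}$ survives, the Frobenius exponent $q^{n_i}$ arising because the $y_{f_i^\flat}$-factor meeting $f_i$ sits at position $F^{n_i}(f_i^\flat) = h - f_i$. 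In case (2) ($i = m$) contribution (a) vanishes, since all $\CO_{f_j}$ for $j < m$ have already been removed from $A_m$; the only surviving interactions are the self-interactions (b) at the half-level orbits with $j \in [m,n]$, where Lemma \ref{pr1} applied to the pairing of $u(y_{f_j})$ with its own Frobenius twist at $F^{N/2}(f_j) = h - f_j$ produces $-\a_{f_j}^\vee \otimes y_{f_j}^{q^{N/2}+1}$, summed over $j$. The main obstacle I anticipate is the careful bookkeeping of the nested corrections in the iterated lift, and a clean accounting of precisely which commutators contribute at level $h$; a subtle additional point in case (2) is to exclude cross-orbit self-interactions $F^a(f_j) + F^b(f_k) = h$ with $j \ne k \in [m,n]$, which uses that the orbits $\CO_{f_j}$ are distinct $F$-orbits in $\Phi$ by the choice of labeling.
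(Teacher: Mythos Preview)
Your approach is essentially the paper's: construct $\psi_i$ by iterating Proposition~\ref{fiber}(1) along the chain $B_0 \subset B_1 \subset \cdots \subset B_i$ (the hypotheses being supplied by Lemma~\ref{sum}), then extract the $h$-component of $L$ applied to an explicit lift via Lemmas~\ref{pr1}, \ref{pr2}, \ref{zero}. Two points need sharpening.

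First, in case (1) your reason for the vanishing of the self-interaction (b) --- ``$f_j^\flat(\bx) > h/2$ strictly for all $j \le m-1$'' --- is not guaranteed by the labeling: the paper only requires $f_{m-1}(\bx) \le h/2$, and a non-self-dual $f_j$ with $f_j(\bx) = h/2$ (hence $f_j^\flat(\bx) = h/2$) is not excluded. The correct argument is combinatorial: if $F^a(f_j^\flat) + F^b(f_k^\flat) = h$ as affine roots then $-\a_{f_j^\flat}$ lies in the $F$-orbit of $\a_{f_k^\flat}$, forcing $f_k^\flat = f_j$, which is impossible since the $f_j$'s and $f_j^\flat$'s (for $j \le m-1$) are disjoint in the enumeration of $D$. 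The paper sidesteps this by invoking Lemma~\ref{pr2} directly (the case ``$1 \le i \le m-1$'' there), whose proof uses $A_{i+1} + B_i \subseteq C_{i-1}$ rather than $\bx$-values.

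Second, in case (2) you correctly note that the cross-terms (a) vanish because $\CO_{f_j} \cap A_m = \varnothing$ for $j < m$, but you also need that the ``corrections'' in the lift of the $y$-part vanish: otherwise the self-interaction $(\hat w^{-1} F(\hat w))_h$ would contain mixed $x$--$y$ terms not absorbed by $\pi(\psi_m^{-1}(\hat x, 0))$. The point is that for $i = m$ one has $A_m \setminus B_m \subseteq \tPhi_M^+$, hence $\hat x \in \BM_r^+$ and $L(\hat x) \in \BM_r^+$; since $\CO_{f_j^\flat} \cap \tPhi_M = \varnothing$, the $\pi^{A:B}_{f_j^\flat}$-components of $\hat x$ (i.e.\ your corrections) are zero. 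The paper records this as the second clause of its claim (a). Your handling of the cross-orbit self-interactions for $j \ne k \in [m,n]$ is correct and in fact more explicit than the paper, which simply reduces to $m = n$ without comment.
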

\begin{proof}
Without loss of generality we may assume that $m = n$. In particular, $B_i = \tPhi^h \cup \CO_{f_1^\flat} \cup \cdots \cup \CO_{f_i^\flat}$ for $0 \le i \le m$. 

By Lemma \ref{sum} we have $A_i + \CO_{f_j^\flat} \subseteq A_j + B_j \subseteq B_{j-1}$ for $1 \le j \le i \le m$. Thus by applying Proposition \ref{fiber} (1) repeatedly, we obtain an isomorphism \[\psi_i: Y^{A_i}_h = Y^{A_i}_{B_0} \cong Y^{A_i}_{B_1} \times \BA_{f_1^\flat} \cong \cdots \cong Y^{A_i}_{B_i} \times \BA_{D_i^\flat}.\]

Let $z = s_{\tPhi^+: \tPhi^h} \circ \psi_i\i(\hat x, y)$. We claim that

(a) $z = \hat x \hat w$ for some $w \in \BA_{B_i \setminus \tPhi^h}$ such that $w_{F^j(f_i^\flat)} = y_{f_i^\flat}^{q^j} + P_j(x)$ for $0 \le j \le N-1$, where each $P_j$ is a polynomial function on $\BA_{A_i \setminus B_i}$. Moreover, $P_j = 0$ if $i = m$.

Indeed, the first claim follows from the Proposition \ref{fiber}. Moreover, if $i = m$, then $A_i \setminus B_i \subseteq \tPhi_M$ and $L(\hat x) \in \BM_r^+$. Hence $P_j = 0$ for $1 \le j \le N-1$ by Proposition \ref{fiber}. So (a) is proved.

Then we claim that 

(b) $x_{F^j(f_i)} = x_{f_i}^{q^j}$ for $1 \le i \le m-1$ and $0 \le j \le N-1$.

Indeed, Let $v = x_{\CO_{f_i}} \in \BA_{\CO_{f_i}}$. As $\hat x \in Y^{A_i}_{B_i}$, we have $\hat v \in Y^{A_i}_{A_{i+1}} \subseteq \BG_r^{A_i} / \BG_r^{A_{i+1}} \cong \BA_{\CO_{f_i}}$, that is, $L(\hat v) \in \BA_{f_i} \subseteq \BA_{\CO_{f_i}}$. Now (b) follows from Lemma \ref{zero}.

Assume that $1 \le i \le m-1$. By (b) we have 

(c) $L(\hat x)_f = 0 $ if $f \in \CO_{f_i} \setminus \{f_i\}$ and $L(\hat x)_f = x_{f_i}^{q^N} - x_{f_i}$ if $f = f_i$.

Note that $\hat w, F(\hat w) \in \BG_r^{B_i}$, $\CO_{f_i} < \CO_{f_i^\flat}$ and $B_i + B_i \subseteq \tPhi^{h+}$. Moreover, $L(\hat x) \in \BG_r^{A_i}$ and $[\hat w, (L(\hat x)_{<f_i})\i] \in [\BG_r^{B_i}, \BG_r^{A_{i+1}}] \subseteq \BG_r^{C_{i-1}}$. It follows from Lemma \ref{pr1} and Lemma \ref{pr2} that $(\hat w\i)_h = 0$ and \begin{gather*} (L(\hat x)_{\ge f_i} F(\hat w))_h =  (L(\hat x)_{\ge f_i})_h + F(\hat w)_h = L(\hat x)_h; \\ (\hat w\i [\hat w, (L(\hat x)_{< f_i})\i])_h = (\hat w\i)_h + ([\hat w, (L(\hat x)_{< f_i})\i])_h = 0.\end{gather*} Then one computes that \begin{align*}
    \pi(\psi_i\i(\hat x, y)) &= (\hat w\i L(\hat x) F(\hat w))_h \\ &=(L(\hat x)_{<f_i} \hat w\i [\hat w, (L(\hat x)_{<f_i})\i] L(\hat x)_{\ge f_i} F(\hat w))_h \\ &= (\hat w\i [\hat w, (L(\hat x)_{<f_i})\i] L(\hat x)_{\ge f_i} F(\hat w))_h \\ 
    &= \sum_{f \in \CO_{f_i}} ((\hat w\i)_{h-f} L(\hat x)_f)_h + (\hat w\i [\hat w, (L(\hat x)_{< f_i})\i])_h +  (L(\hat x)_{\ge f_i} F(\hat w))_h \\  &= ((\hat w\i)_{h-f_i} L(\hat x)_{f_i})_h + L(\hat x)_h \\ &= \a_{f_i}^\vee \otimes ((x_{f_i}^{q^N} - x_{f_i}) (y_{f_i^\flat}^{q^{n_i}} + P_{n_i}(x))) + L(\hat x)_h \\ &= \a_{f_i}^\vee \otimes (x_{f_i}^{q^N} - x_{f_i}) y_{f_i^\flat}^{q^{n_i}} + \pi(\psi_i\i(\hat x, 0),
    \end{align*} where the third equality follows from that $f_i < B_i$, the fourth equality follows from Lemma \ref{pr1}, and the fifth equality follows from (c).

    Assume $i = m$. Then $\hat x, L(\hat x) \in \BM_r^+$ and $F^{N/2}(f_i) = h - f_i$. Moreover, by the second statement of (a) we have \begin{align*} \hat w_{\CO_{f_i}}\i F(\hat w_{\CO_{f_i}}) &= (\hat w_{f_i} \cdots F^{N-1}(\hat w_{f_i}))\i F(\hat w_{f_i}) \cdots F^{N-1}(\hat w_{f_i}) \\ &\equiv  \hat w_{f_i}\i F^N(\hat w_{f_i}) [\hat w_{f_i}, F^{N/2}(\hat w_{f_i})\i] \mod \BG_r^{h+}. \end{align*} Thus \[(\hat w\i F(\hat w))_h = (\hat w_{\CO_{f_i}}\i F(\hat w_{\CO_{f_i}}))_h = [\hat w_{f_i}, F^{N/2}(\hat w_{f_i})\i]_h = -\a_{f_i}^\vee \otimes y_{f_i}^{q^{N/2} + 1}.\] As $L(\hat x) \in \BM_r^+$, we have $[\hat w, L(\hat x)\i] \in \BG_r^{C_{i-1}}$. In particular, $[\hat w, L(\hat x)\i]_h = 0$ and $[[\hat w, L(\hat x)\i], F(\hat w)] \in \BG_r^{h+}$. Now we have \begin{align*}
    \pi(\psi_i\i(\hat x, y)) &= (\hat w\i L(\hat x) F(\hat w))_h \\ &= (L(\hat x) \hat w\i [\hat w, L(\hat x)\i] F(\hat w))_h \\ &= (\hat w\i [\hat w, L(\hat x)\i] F(\hat w))_h + L(\hat x)_h \\ &= (\hat w\i F(\hat w) [\hat w, L(\hat x)\i])_h + L(\hat x)_h \\ &= (\hat w\i F(\hat w))_h + [\hat w, L(\hat x)\i]_h + L(\hat x)_h \\
    &= -\a_i^\vee \otimes y_{f_j}^{q^{N/2} + 1} + L(\hat x)_h \\ &= -\a_j^\vee \otimes y_{f_j}^{q^{N/2} + 1} + \pi(\psi_i\i(\hat x, 0)),
\end{align*} where the third (resp. the fifth) equality follows from Lemma \ref{pr2} (resp. Lemma \ref{pr1}). The proof is finished.
\end{proof}

Recall that $\CL_{\chi_{h+}^h}$ is the multiplicative local system on $V = X_*(T) \otimes \overline \BF_q$ corresponding to the character $\chi_{h+}^h: V^F \to \overline \BQ_\ell^\times$.
\begin{proposition} \label{computation}
     Let $\a \in \Phi \setminus \Phi_M$ and let $\k: \BG_a \to V$ be the map given by $x \mapsto \a^\vee \otimes x$ for $x \in \overline \BF_q$.

     (1) $\k^* \CL_{\chi_{h+}^h}$ is nontrivial and hence $H_c^i(\BG_a, \k^* \CL_{\chi_{h+}^h}) = 0$ for $i \in \BZ$;

     (2) if $N$ is even and $F^{N/2}(\a) = -\a$, then \begin{align*} \dim H_c^i(\BG_a, \tau^* \CL_{\chi_{h+}^h}) = \begin{cases} q^{N/2} &\text{ if } i = 1; \\ 0, &\text{otherwise,} \end{cases} \end{align*} where $\tau: \BG_a \to V$ is given by $x \mapsto \a^\vee \otimes x^{q^{N/2} + 1}$. Moreover, in this case $F^N$ acts on $H_c^1(\BG_a, \tau^* \CL_{\chi_{h+}^h})$ via $-q^{N/2}$. 
\end{proposition}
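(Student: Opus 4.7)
The plan is to identify the two pullback sheaves on $\BG_a$ concretely as Artin--Schreier-type rank-one local systems, and then apply Grothendieck--Ogg--Shafarevich together with Deligne's purity.

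For (1), since $\kappa$ is a homomorphism of algebraic groups (defined over $\BF_{q^N}$), the sheaf $\CL':=\kappa^*\CL_{\chi_{h+}^h}$ is a multiplicative local system on $\BG_a$. By Lemma~\ref{base-change}, its base change to $\BF_{q^N}$ corresponds to the additive character $x\mapsto\chi_{h+}^h(\Nm_N(\alpha^\vee\otimes x))$ of $\BF_{q^N}$, which is nontrivial precisely because $\alpha\notin\Phi_M=\Phi_\chi$. Hence $\CL'$ is a nontrivial Artin--Schreier sheaf, lisse on $\BA^1$ with Swan conductor $1$ at $\infty$. Nontriviality of a rank-one local system forces $H^0_c=H^2_c=0$, and Grothendieck--Ogg--Shafarevich gives $\chi_c(\BG_a,\CL')=1-1=0$, so $H^i_c(\BG_a,\CL')=0$ for all $i$.

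For (2), factor $\tau=\kappa\circ N$ with $N\colon\BG_a\to\BG_a$, $x\mapsto x^{q^{N/2}+1}$, so $\tau^*\CL_{\chi_{h+}^h}=N^*\CL'$. First, I would verify that the trace function of $N^*\CL'$ on $\BF_{q^N}$-points is identically $1$: for $x\in\BF_{q^N}$, $y:=N(x)=\Nm_{\BF_{q^N}/\BF_{q^{N/2}}}(x)\in\BF_{q^{N/2}}$, so $y^{q^{i+N/2}}=y^{q^i}$; pairing the indices $i$ and $i+N/2$ in $\Nm_N(\alpha^\vee\otimes y)=\sum_{i=0}^{N-1}F^i(\alpha^\vee)\otimes y^{q^i}$ and using $F^{N/2}(\alpha^\vee)=-\alpha^\vee$ makes the sum vanish. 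Next, $N$ is tame at infinity with ramification index $q^{N/2}+1$ (coprime to $p$), so the tame base-change formula for Swan conductors yields $\mathrm{Sw}_\infty(N^*\CL')=q^{N/2}+1$, and $N^*\CL'$ is lisse elsewhere; hence by Grothendieck--Ogg--Shafarevich $\chi_c(\BA^1,N^*\CL')=-q^{N/2}$. For concentration in degree $1$, $H^0_c=0$ by positive dimensionality, and $H^2_c=0$ because the Artin--Schreier character of $\CL'$ has $p$-power order while $N_*\pi_1(\BA^1)$ has index prime to $p$, so the restriction is still nontrivial, forcing $N^*\CL'$ to be a nontrivial rank-one local system. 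Therefore $\dim H^1_c=q^{N/2}$. Finally, the Lefschetz trace formula together with the above trace computation gives $\tr(F^N\mid H^1_c)=-q^N$; by Deligne purity the $F^N$-eigenvalues on $H^1_c$ have absolute value $q^{N/2}$, and the only way $q^{N/2}$ such eigenvalues can sum to $-q^N$ is if each equals $-q^{N/2}$.

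The main obstacle is the Swan conductor input in (2): one needs $\mathrm{Sw}_\infty(\CL')=1$ (standard for Artin--Schreier sheaves of nontrivial additive characters) together with the tame base-change formula for Swan conductors, both of which require careful invocation. A cleaner alternative is the projection formula $R\Gamma_c(\BA^1,N^*\CL')=R\Gamma_c(\BA^1,\CL'\otimes N_!\overline\BQ_\ell)$ combined with the decomposition of $N_!\overline\BQ_\ell$ into Kummer sheaves $\CK_\theta$ for $\theta\in\widehat{\mu_{q^{N/2}+1}}$ on $\BG_m$ and the stalk $\overline\BQ_\ell$ at $\{0\}$; the excision long exact sequence then realizes $H^1_c$ as the cokernel of an injection $\overline\BQ_\ell\hookrightarrow\overline\BQ_\ell^{q^{N/2}+1}$ whose Frobenius structure can be traced via Gauss-sum computations.
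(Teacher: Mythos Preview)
Your argument is correct and tracks the paper's proof closely: both identify $\kappa^*\CL_{\chi_{h+}^h}$ with the Artin--Schreier sheaf $\CL_\theta$ for $\theta=\chi_{h+}^h\circ\Nm_N\circ\kappa$ (via Lemma~\ref{base-change}), observe that $\theta$ is nontrivial because $\alpha\notin\Phi_M$, and for (2) compute that $\theta|_{\BF_{q^{N/2}}}$ is trivial using $F^{N/2}(\alpha^\vee)=-\alpha^\vee$. The paper then simply cites \cite[Lemma~9.4]{Boyarchenko_10} for (1) and \cite[Proposition~6.6.1]{BoyarchenkoW_16} for (2), whereas you reprove these cohomological facts directly via Grothendieck--Ogg--Shafarevich, the Lefschetz trace formula, and the Weil bound (triangle-inequality equality case). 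Your route is thus more self-contained but requires handling the Swan-conductor bookkeeping explicitly; the paper's route is shorter but depends on those two references as black boxes.

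One technical slip: in your argument that $H^2_c=0$ in (2), the map $N\colon\BA^1\to\BA^1$, $x\mapsto x^{q^{N/2}+1}$, is totally ramified at $0$, so it is not an \'etale cover and ``$N_*\pi_1(\BA^1)$ has index prime to $p$'' is not literally meaningful. The conclusion is still correct; the quickest fix is to observe that you have already shown $\mathrm{Sw}_\infty(N^*\CL')=q^{N/2}+1>0$, so $N^*\CL'$ is wildly ramified at $\infty$ and in particular not geometrically constant, whence $H^2_c=0$. (Alternatively, restrict to $\BG_m$, where $N$ is genuinely \'etale of prime-to-$p$ degree, and run your original argument there.)
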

\begin{proof}
Consider the composition of maps \[\th: \BF_{q^N} \overset \k \to V^{F^N} \overset {\Nm_N} \to V^F \overset {\chi_{h+}^h} \to \overline \BQ_\ell^\times,\] where $\Nm_N: V \to V$ is given by $v \mapsto v + \cdots + F^{N-1}(v)$. As $\k$ is a homomorphism defined over $\BF_{q^N}$, we have $\k^* \CL_{\chi_{h+}^h} = \CL_\th$ by Lemma \ref{base-change}. Moreover, since $\a \in \Phi \setminus \Phi_M$, $\th$ is nontrivial by definition. Hence $\CL_\th$ is nontrivial and the statement (1) follows from \cite[Lemma 9.4]{Boyarchenko_10}.

Assume that $N$ is even and $F^{N/2}(\a) = -\a$. Then for $x \in \BF_{q^{N/2}}$ we have \begin{align*} \Nm_N(\a^\vee \otimes x) &= \sum_{i=0}^{N-1} F^i(\a^\vee) \otimes x^{q^i} \\ &= \sum_{i=0}^{N/2 - 1}(F^i(\a^\vee)\otimes x^{q^i} + F^{N/2 + i}(\a) \otimes x^{q^{N/2 + i}}) \\ &= \sum_{i=0}^{N/2 - 1}(F^i(\a^\vee)\otimes x^{q^i} + F^i(-\a^\vee) \otimes x^{q^i}) \\ &=0.\end{align*} Hence the (nontrivial) character $\th$ of $\BF_{q^N}$ restricts to a trivial character of $\BF_{q^{N/2}}$. Now the statement (2) follows from \cite[Proposition 6.6.1]{BoyarchenkoW_16}.
\end{proof}

Let $Z$ be a locally closed subvariety of $\BG_h^+$ with the natural embedding map $i_Z: Z \hookrightarrow \BG_h^+$. For a local system $\CF$ on $\BG_h^+$, we write $H_c^j(Z, \CF) = H_c^j(Z, i_Z^*\CF)$ for simplicity. We set $\pi = \pi_h^{\tPhi^+:\tPhi^h}: \BG_h^+ = \BG_r^+/\BG_r^h \to \BA_h \cong V$.
\begin{proposition} \label{key}
      The following statements hold:
   
         (1) $H_c^j(Y^{A_i}_h, \pi^* \CL_{\chi_{h+}^h}) \cong H_c^j(Y^{A_{i+1}}_h, \pi^* \CL_{\chi_{h+}^h})^{\oplus q^N}$ for $1 \le j \le m-1$;

         (2) $H_c^j(Y^{A_m}_h, \pi^* \CL_{\chi_{h+}^h}) \cong H_c^{j-n-m+1}(Y^M_h, \pi_M^* \CL_{\chi_{h+}^h})^{\oplus q^{(n-m+1)N/2}} ((-q^{N/2})^{n+m-1})$.

     Here $Y^M_h = Y_h \cap \BM_h^+$, and $\pi_M$ is the restriction of $\pi$ to $\BM_h^+$.
\end{proposition}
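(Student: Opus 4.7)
My strategy for both parts is to apply the isomorphism $\psi_i \colon Y^{A_i}_h \stackrel{\sim}{\to} Y^{A_i}_{B_i} \times \BA_{D_i^\flat}$ of Proposition \ref{factor} and then exploit that $\CL_{\chi_{h+}^h}$ is a \emph{multiplicative} local system on the additive group $V$ to split $\pi^*\CL_{\chi_{h+}^h}$ as a K\"unneth-style tensor product of factors along each $\BA_f$-direction and over the base $Y^{A_i}_{B_i}$. Proposition \ref{computation} then computes the cohomological contribution of each affine-line factor.

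For part (1), Proposition \ref{factor}(1) shows that $\pi \circ \psi_i^{-1}$ depends on $y \in \BA_{D_i^\flat}$ only through $y_{f_i^\flat}$, via the fibrewise affine map $y_{f_i^\flat} \mapsto \alpha_{f_i}^\vee \otimes (x_{f_i}^{q^N}-x_{f_i})\,y_{f_i^\flat}^{q^{n_i}}$. Hence on each $\BA_{f_j^\flat}$ with $j<i$ the pulled-back sheaf is constant, contributing $\overline\BQ_\ell(-1)[-2]$ by K\"unneth and being absorbed later. Since $f_i \in D$ forces $\alpha_{f_i} \in \Phi\setminus\Phi_M$, Proposition \ref{computation}(1) (combined with the fact that the $q^{n_i}$-th Frobenius twist is a universal homeomorphism, hence preserves $H_c^*$) yields that the fibrewise pushforward along the $\BA_{f_i^\flat}$-projection $p$ vanishes outside the locus $\{c := x_{f_i}^{q^N}-x_{f_i} = 0\} \subseteq Y^{A_i}_{B_i}$, and on this locus equals $\overline\BQ_\ell(-1)[-2]$ tensored with $d^*\CL_{\chi_{h+}^h}$ where $d(\hat x) := \pi(\psi_i^{-1}(\hat x, 0))$. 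The locus $\{c = 0\} \cap Y^{A_i}_{B_i}$ decomposes as $\bigsqcup_{a \in \BF_{q^N}} Z_a$ into $q^N$ sections, and the $F$-stable element $v_a := u_{f_i}(a) u_{F(f_i)}(a^q) \cdots u_{F^{N-1}(f_i)}(a^{q^{N-1}})$ gives an $F$-equivariant translation $Z_0 \overset{\sim}{\to} Z_a$ preserving the sheaf $d^*\CL_{\chi_{h+}^h}$. Finally, $Z_0 \cong Y^{A_{i+1}}_{B_i}$, and Proposition \ref{fiber}(1) applied with $A=A_{i+1}$, $B=B_{i+1}$, $C=B_i$, $f=f_{i+1}^\flat$ (hypotheses by Lemma \ref{sum}) further gives $Y^{A_{i+1}}_{B_i} \cong Y^{A_{i+1}}_{B_{i+1}} \times \BA_{f_{i+1}^\flat}$. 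Reassembling via Proposition \ref{factor}(1) for index $i+1$ recovers $Y^{A_{i+1}}_h$, and the $[-2](-1)$ shift from integrating $\BA_{f_i^\flat}$ is exactly cancelled by the K\"unneth contribution of the new $\BA_{f_i^\flat}$-factor in $\BA_{D_{i+1}^\flat}$, leaving the pure multiplicity $q^N$.

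For part (2), Proposition \ref{factor}(2) says that $\pi \circ \psi_m^{-1}$ depends on $y$ only via the $y_{f_j}$ for $m \le j \le n$, through $y_{f_j} \mapsto -\alpha_{f_j}^\vee \otimes y_{f_j}^{q^{N/2}+1}$. Multiplicativity and K\"unneth then give
\[
H^*_c(Y^{A_m}_h, \pi^*\CL_{\chi_{h+}^h}) \cong H^*_c(Y^{A_m}_{B_m}, d^*\CL_{\chi_{h+}^h}) \otimes \bigotimes_{j=m}^n H^*_c(\BA_{f_j}, \tau_j^*\CL_{\chi_{h+}^h}) \otimes \bigotimes_{j=1}^{m-1} H^*_c(\BA_{f_j^\flat}, \overline\BQ_\ell).
\]
Since $f_j = f_j^\flat$ implies $F^{N/2}(\alpha_{f_j}) = -\alpha_{f_j}$, and since $\alpha_{f_j} \in \Phi\setminus\Phi_M$, Proposition \ref{computation}(2) (with the sign in $\tau_j$ absorbed by passing to the inverse multiplicative sheaf, which has the same Gauss-sum numerics) computes each of the first $n-m+1$ factors as $\overline\BQ_\ell^{\oplus q^{N/2}}$ concentrated in degree $1$ with $F^N$-eigenvalue $-q^{N/2}$. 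Each of the $m-1$ trailing factors contributes $\overline\BQ_\ell(-1)$ in degree $2$. Finally, using $A_m = B_m \cup \tPhi_M^+$ from Lemma \ref{sum}, the identification $A_m \setminus B_m = \tPhi_M^+ \setminus \tPhi_M^h$ matches $Y^{A_m}_{B_m}$ canonically with $Y^M_h$ and $d^*\CL_{\chi_{h+}^h}$ with $\pi_M^*\CL_{\chi_{h+}^h}$. Counting gives total degree shift $2(m-1)+(n-m+1)=n+m-1$, total multiplicity $q^{(n-m+1)N/2}$, and total $F^N$-eigenvalue $q^{N(m-1)} \cdot (-q^{N/2})^{n-m+1} = (-q^{N/2})^{n+m-1}$ (using $(-1)^{n-m+1}=(-1)^{n+m-1}$), matching the claim.

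The main obstacle I anticipate is twofold. First, the $F$-equivariance of the $v_a$-translation in part (1) and the canonical identification $Y^{A_m}_{B_m} \cong Y^M_h$ in part (2) (compatibly with $d$ and $\pi_M$) both require careful tracking of how the Lang map and the level-$h$ projection interact with the section maps $s_{A:B}$ and the closedness properties of the sets listed in Lemma \ref{sum}. Second, the bookkeeping of Tate twists and Frobenius eigenvalues, while mechanically straightforward, has several moving parts: one must ensure that the sign changes (from $-\alpha^\vee$ in $\tau_j$, from the $q^{n_i}$-Frobenius twist in the $\BA_{f_i^\flat}$-direction, and from the $v_a$-translations) are absorbed in a way compatible with the Gauss-sum computation so as to land on the precise factors $q^N$ and $(-q^{N/2})^{n+m-1}$ stated in the proposition.
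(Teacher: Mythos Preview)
Your overall strategy is the same as the paper's: use Proposition \ref{factor} to trivialize $Y^{A_i}_h$ as a product, split $\pi^*\CL_{\chi_{h+}^h}$ via multiplicativity, and invoke Proposition \ref{computation} fibrewise. Part (2) is essentially identical to the paper's argument, including the K\"unneth decomposition and the identification $Y^{A_m}_{B_m} \cong Y^M_h$ via $A_m \setminus B_m = \tPhi_M^+ \setminus \tPhi_M^h$.

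For part (1), however, the paper takes a shorter route that sidesteps exactly the bookkeeping you flag as an obstacle. Instead of pushing forward along $\BA_{f_i^\flat}$ to the base $Y^{A_i}_{B_i}$, decomposing there into $\bigsqcup Z_a$, and then \emph{reassembling} through Proposition \ref{factor} at index $i+1$ (which forces you to track a shift cancellation and to verify that $d^*\CL$ is preserved by the $v_a$-translations through the section maps $s_{A:B}$), the paper stays upstairs on $Y^{A_i}_h \subseteq \BG_h^+$. It observes that the full pushforward $p_!\pi^*\CL$ is supported over $Y_i = \{x_{f_i}^{q^N} = x_{f_i}\}$, so that $H_c^*(Y^{A_i}_h,\pi^*\CL) = H_c^*(Y^i,\pi^*\CL)$ with $Y^i = p^{-1}(Y_i)$; it then uses the purely group-theoretic decomposition
\[
Y^i \;=\; \bigsqcup_{g \in (\BG_h^{A_i})^F / (\BG_h^{A_{i+1}})^F} g\,Y^{A_{i+1}}_h,
\]
where the index set has exactly $q^N$ elements (these are precisely your $v_a$). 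The invariance of $\pi^*\CL$ under left translation by $F$-fixed $g$ is then immediate from $\pi(x) = (x^{-1}F(x))_h$, with no reference to $\psi_i$ or $s_{A:B}$. This yields $H_c^j(Y^{A_i}_h,\pi^*\CL) \cong H_c^j(Y^{A_{i+1}}_h,\pi^*\CL)^{\oplus q^N}$ directly, without any $[-2](-1)$ to cancel. Your approach is not wrong, but the paper's avoids the compatibility checks you rightly identified as delicate.
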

\begin{proof}
By Proposition \ref{factor} we have an isomorphism \[ \psi_i: Y^{A_i}_h \cong Y^{A_i}_{B_i} \times \BA_{D_i^\flat}.\] Let $p: Y^{A_i}_h \to Y^{A_i}_{B_i}$ be the natural projection. Set $\CL = \CL_{\chi_{h+}^h}$.

Assume $1 \le i \le m-1$. Let $Y^i = \{\hat x \in Y^{A_i}_h; x_{f_i}^{q^N} - x_{f_i} = 0\}$. Then $\psi_i$ restricts to an isomorphism \[Y^i \cong Y_i \times \BA_{D_i^\flat},\] where $Y_i = \{\hat x \in Y^{A_i}_{B_i}; x_{f_i}^{q^N} - x_{f_i} = 0\}$. In view of Proposition \ref{factor} (1), the restriction of $\pi$ to $Y^{A_i}_{B_i} \times \BA_{D_i^\flat}$ is given by \[\pi(\psi_i\i(\hat x, y)) = \pi_i(\psi_i\i(\hat x, y)) = \eta(\hat x, y) + \pi_0(\hat x),\] where $\eta(\hat x, y) = \a_{f_i}^\vee \otimes (x_{f_i}^{q^N}- x_{f_i}) y_{f_i}^{q^{n_i}}$ with $1 \le n_i \le N-1$ such that $F^{n_i}(f_i^\flat) = h - f_i$, and $\pi_0$ is the restriction of $\pi$ to $Y^{A_i}_{B_i} \times \{0\} \subseteq  Y^{A_i}_{B_i} \times \BA_{D_i^\flat}$. As $\CL$ is a multiplicative local system, we have $\pi^* \CL \cong \eta^* \CL \otimes p^* \pi_0^* \CL$. Hence by the projection formula, \[p_! \pi^* \CL  \cong p_! \eta^* \CL \otimes \pi_0^* \CL.\] For $\hat x \in Y^{A_i}_h$ we define $\eta_{\hat x}: \BA_{D_i^\flat} \to V$ be the homomorphism given by $\eta_{\hat x}(y) = \eta(\hat x, y)$. As $\a_{f_i} \in \Phi \setminus \Phi_M$, it follows by Proposition \ref{computation} that $\eta_{\hat x}^* \CL$ is a trivial multiplicative local system if and only if $x_{f_i}^{q^N} - x_{f_i} =0$, that is, $\hat x \in Y_i$. Thus $p_! \pi^* \CL$ is supported on $Y_i \times \BA_{D_i^\flat} \cong Y^i$ and hence $p_! \pi^* \CL \cong p_! (\pi|_{Y^i})^* \CL$. Noticing that \[Y^i = \sqcup_{g \in (\BG_h^{A_i})^F / (\BG_h^{A_{i+1}})^F} g Y^{A_{i+1}}_h\] and that $\#((\BG_h^{A_i})^F / (\BG_h^{A_{i+1}})^F) = \#(\BG_h^{A_i} / \BG_h^{A_{i+1}})^F = q^N$, we have \[H_c^j(Y^{A_i}_h, \pi^* \CL) \cong H_c^j(Y^i, \pi^* \CL) \cong H_c^j(Y^{A_{i+1}}_h, \pi^* \CL)^{\oplus q^N},\] and the first statement is proved.

By Proposition \ref{factor} (2), for $(\hat x, y) \in Y^{A_m}_{B_m} \times \BA_{D_m^\flat} =Y^M_h \times \BA_{D_m^\flat}$ we have \[\pi(\hat x, y) = \tau(y) + \pi_M(\hat x),\] where $\tau(y) = \sum_{j=m}^n \a_{f_j}^\vee \otimes y_{f_j}^{q^{N/2}+1}$. Thus $\pi^* \CL \cong \pi_M^* \CL \boxtimes \tau^* \CL$. By K\"{u}nneth formula, we have \begin{align*}
    &\quad\ H_c^j(Y^{A_m}_h, \pi^* \CL) \\ &\cong \oplus_s H_c^s(\BA_{D_m^\flat}, \tau^* \CL) \otimes H_c^{j-s}(Y^M_h, \pi_M^* \CL) \\ &\cong \otimes_{i=m}^n H_c^1(\BA_{f_i}, \tau_i^* \CL) \otimes \otimes_{i=1}^{m-1} H_c^2(\BA_{f_i^\flat}, \overline \BQ_\ell) \otimes H_c^{j-n-m+1}(Y^M_h, \pi_M^* \CL) \\ &\cong H_c^{j-n-m+1}(Y^M_h, \pi_M^* \CL)^{q^{(n-m+1)N/2}}((-q^{N/2})^{m+n-1}),
\end{align*} where $\tau_i : \BG_a \cong \BA_{f_i} \to V$ is given by $x \mapsto \a_{f_i}^\vee \otimes x^{q^{N/2}+1}$, and the last isomorphism follows from  Proposition \ref{computation} (2). This finishes the proof of the second statement.
\end{proof}

\subsection{Proof of Theorem \ref{main}}\label{sec:proof_main}

Let $G'$, $T'$, $Y_f'$ be as in \S \ref{subsec: derived}. Let $\chi'$ be the restriction of $\chi$ to $(\BT^{\prime +}_f)^F$. By Lemma \ref{der} we have \[H_c^j(Y_f, \overline \BQ_\ell)[\chi] \cong (\ind_{(\BT^{\prime +}_f)^F}^{(\BT_f^+)^F} (H_c^j(Y_f', \overline \BQ_\ell)[\chi']))[\chi].\] So it suffices to prove the theorem for semisimple reductive groups $G = G'$. 

We argue by induction on $f \in \tD^+$ and $\# \Phi$. Indeed, if $f = \min \tD^+$, then $(\BT_f^+)^F = Y_f = \{1\}$ and the statement is trivial. On the other hand, if $\Phi$ is empty, that is, $G = T$, then $Y_f = (\BT_f^+)^F$ is a finite set and the statement is also true. Now we assume the theorem holds for all reductive groups $L$ such that $\# \Phi_L < \# \Phi_G$, and for all $Y_{f'}$ with $f' \le f \in \tD^+$.

If $f \in \D_\aff^+$, by Corollary \ref{coh} (1) we have a $(\BT_f^+)^F$-equivariant isomorphism \[H_c^i(Y_{f+}, \overline \BQ_\ell) = H_c^{i-2}(Y_f, \overline \BQ_\ell)(-q^N).\] Then the statement follows by induction hypothesis.

Now we assume $f = h \in \BZ_{\ge 1}$. Let notation be as in \S\ref{subsec:red}. By Corollary \ref{coh} (2), \[H_c^i(Y_{h+}, \overline \BQ_\ell)[\chi] \cong H_c^i(Y_h, \pi^* \CL_{\chi_{h+}^h})[\chi].\] If $\chi_{h+}^h$ is trivial, then $\CL_{\chi_{h+}^h} = \overline \BQ_\ell$ and $H_c^i(Y_{h+}, \overline \BQ_\ell)[\chi] \cong H_c^i(Y_h, \overline \BQ_\ell)[\chi]$. Hence the statement also follows by induction hypothesis. Assume $\chi_{h+}^h$ is nontrivial and let notation be as in \S \ref{subsec:red}. By Condition \ref{hypo} and Lemma \ref{surj}, we have $M = M_\chi \neq G$. By Proposition \ref{key} and Corollary \ref{coh} (2) we have \begin{align*} &\quad\ H_c^i(Y_h, \pi^* \CL_{\chi_{h+}^h}) [\chi] = H_c^i(Y^{A_1}_h, \pi^* \CL_{\chi_{h+}^h})[\chi]  \\ &\cong (H_c^{i-m-n+1}(Y^M_h, \pi_M^* \CL_{\chi_{h+}^h})[\chi])^{\oplus q^{(m+n-1)N/2}}((-q)^{(m+n-1)N/2}) \\ &\cong (H_c^{i-m-n+1}(Y_{h+}^M, \overline \BQ_\ell) [\chi])^{\oplus q^{(m+n-1)N/2}}((-q)^{(m+n-1)N/2}), \end{align*} So the statement follows by induction hypothesis. The proof is finished.

\subsection{Computation of cohomological degree}\label{sec:computation_coho_degree}
Let $\chi$ be a smooth character $\CT^+(\CO_k)$, which factors through $(\BT_r^+)^F$. We have the Howe factorization of an arbitrary lift of $\chi$ to a smooth character of $T(k)$ from \cite[\S3.6]{Kaletha_19}. We use notation from \emph{loc. cit.} In particular, we have the integers $(r \geq) r_d \geq r_{d-1} > r_{d-2} > \dots > r_0 > 0$ at which the breaks happen and the increasing subsets $R_i:= R_{r_i} \subseteq \Phi$ (which are the roots systems of the twisted Levi subgroups $M_\chi$ appearing in \S\ref{sec:proof_main}). Moreover, $r_{-1} = 0$, $R_d = \Phi$ by definition. Let $R_i^\red = R_i \cap \Phi^\red$, where $\Phi^\red$ is as in \S\ref{sec:Drinfeld_strat}.

\begin{proposition}\label{prop:computation_coho_degree}
We have 
\[
N s_{\chi,r} = 2r\cdot\#\Phi - \#\Phi^\red - \#R_0^\red - \sum_{i=0}^{d-1} r_i(\#R_{i+1} - \#R_i).
\]
\end{proposition}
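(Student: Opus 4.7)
The plan is to unwind the inductive proof of Theorem \ref{main} into a sum of shifts indexed by $\tD^+$, and reorganize the sum according to the Howe factorization of $\chi$. First I would read off from the inductive step that the cohomological degree shifts according to the rule: passing from $Y_f$ to $Y_{f+}$ gives $s \mapsto s + 2$ if $f \in \Delta_\aff^+$ (Corollary \ref{coh}(1)); $s \mapsto s$ if $f = h \in \BZ_{\geq 1}$ with $\chi_{h+}^h = 1$ (Corollary \ref{coh}(2)); and $s^G_{h+,\chi} = s^M_{h,\chi} + |D_h|$ where $M = M_\chi$ is a proper sub-Levi by Condition \ref{hypo} and Lemma \ref{surj}, if $\chi_{h+}^h \neq 1$ (Proposition \ref{key}(2)).

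Next I would identify the nontrivial integer depths with the Howe breaks $\{r_0, \ldots, r_{d-1}\}$, and at depth $h = r_i$ match $M_{\chi_{r_i+}^{r_i}}$ with the $R_i$-Levi $M^{(i)}$ from the Howe tower $T = M^{(-1)} \subseteq M^{(0)} \subsetneq M^{(1)} \subsetneq \ldots \subsetneq M^{(d)} = G$, so that the ambient Levi just before the break has root system $R_{i+1}$. Iterating the recursion from depth $0$ up to $r = r_d$ (setting $r_{-1} := 0$ and letting the bottom range be traversed inside $M^{(0)}$, where all integer shifts are trivial because there are no Howe breaks below $r_0$), I obtain
\begin{equation*}
s_{\chi,r} = 2\bigl(r_0 \#\Delta(M^{(0)}) - \#\Delta^\red(M^{(0)})\bigr) + \sum_{i=1}^{d} 2(r_i - r_{i-1})\#\Delta(M^{(i)}) + \sum_{i=0}^{d-1} |D_{r_i}|,
\end{equation*}
where the bottom summand carries the $-\#\Delta^\red$ correction from the depth-$0$ boundary in the identity $|\{f \in \Delta_\aff^+(L) : f < h\}| = h\#\Delta(L) - \#\Delta^\red(L)$ for integer $h$, while the middle differences $(r_i - r_{i-1})\#\Delta(M^{(i)})$ for $i \geq 1$ arise cleanly because the $\Delta^\red$-corrections cancel when subtracting these counts at successive integer depths.

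A direct count of the set $D_{r_i}$ from \S\ref{subsec:red} gives $|D_{r_i}| = r_i \#(\Delta(M^{(i+1)}) \setminus \Delta(M^{(i)})) - \#(\Delta^\red(M^{(i+1)}) \setminus \Delta^\red(M^{(i)}))$. Under the Coxeter assumption every $F$-orbit in $\Phi$ has length $N$, and since each $R_i$ and each $R_i^\red$ is $F$-stable, the same holds inside them, yielding $N\#\Delta(M^{(i)}) = \#R_i$ and $N\#\Delta^\red(M^{(i)}) = \#R_i^\red$. Multiplying the displayed formula by $N$ and applying Abel summation to the $\Delta_\aff^+$-terms collapses the first two summands into $2r\#\Phi - 2\sum_{i=0}^{d-1} r_i(\#R_{i+1} - \#R_i)$, while the third telescopes via $\sum_i(\#R_{i+1}^\red - \#R_i^\red) = \#\Phi^\red - \#R_0^\red$ into $\sum_i r_i(\#R_{i+1} - \#R_i) - \#\Phi^\red + \#R_0^\red$. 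Adding these and cancelling produces exactly the desired identity.

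The main technical point is the identification used in the second step: verifying that the Levi $M_{\chi_{r_i+}^{r_i}}$ defined in the proof of Theorem \ref{main} by $\Phi_\chi = \{\alpha : \chi \circ \Nm_N(\alpha^\vee \otimes \BF_{q^N}) = 1\}$ coincides with Kaletha's $R_i$-Levi. This is a careful bookkeeping of conventions, relying on \cite[Lemma 3.6.1]{Kaletha_19} and the Howe factorization setup. After that, the remaining ingredients are elementary: counts of affine roots in terms of $\#\Delta$ and $\#\Delta^\red$, Abel summation in the $r_i$'s, and the telescoping of the reduced-root terms across the Levi tower.
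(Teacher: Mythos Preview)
Your approach is essentially the same as the paper's: both unwind the induction from Theorem \ref{main} into a sum of degree shifts organized by the Howe tower, and both compute $|D_{r_i}|$ the same way. The paper runs the recursion top-down by induction on $\#\Phi$ (stripping off one break at a time), while you write out the full sum bottom-up; the algebraic content is identical.

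There is, however, an arithmetic slip in your final paragraph. When you multiply by $N$ and Abel-sum the first two summands of your displayed formula, the term $-2\#\Delta^\red(M^{(0)})$ contributes $-2\#R_0^\red$, which you have dropped: the first two summands actually give
\[
2r\,\#\Phi \;-\; 2\sum_{i=0}^{d-1} r_i(\#R_{i+1} - \#R_i) \;-\; 2\#R_0^\red,
\]
not just the first two terms. If you add \emph{your} stated expressions, you obtain $2r\#\Phi - \sum_i r_i(\#R_{i+1}-\#R_i) - \#\Phi^\red + \#R_0^\red$, with the wrong sign on $\#R_0^\red$. Restoring the missing $-2\#R_0^\red$ fixes this and yields the proposition exactly. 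So the strategy is fine; just carry that one term through.
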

\begin{proof} 
We can argue by induction on $\# \Phi$ (or on the number of jumps $d$). If $\Phi = \varnothing$, the statement is trivial. Suppose it is true for all reductive groups $L$ with $\#\Phi_L < \#\Phi$. Then in view of \S\ref{sec:proof_main} (where we can assume that $\chi$ is trivial over $\BT_r^{h+1}$ with $h = r_{d-1}$), we have \[s_{\chi,r} = 2(r-r_{d-1}) \cdot \#\D +(m+n-1) + s_{\chi, r_{d-1}}^M,\] where $s_{\chi,r_{d-1}}^M$ is the unique integer $i$ such that $H_c^i(Y_{r_{d-1}}^M, \overline \BQ_\ell)[\chi] \neq 0$. Now, 
\begin{align*} 
m+n-1 &= \#D \\ 
&= \#\{ f \in \Delta_\aff \colon f({\bf x}) > 0, f < r_{d-1} \} \cap (\widetilde R_d \sm \widetilde R_{d-1})) \\ 
&= \frac{1}{N} \left( r_{d-1}  (\#R_d - \#R_{d-1}) - (\#R_d^\red - \# R_{d-1}^\red) \right),
\end{align*}
where $ \widetilde R_{d-1} \subseteq \widetilde\Phi$ is the preimage of $R_i$ under the natural projection $\tPhi \to \Phi \sqcup \{0\}$. Note that $N \cdot \#\D = \#\Phi =\#R_d$. The statement now follows by induction hypothesis. \qedhere
\end{proof}

This generalizes the formula from \cite[Theorem 6.1.1]{CI_DrinfeldStrat}

\begin{corollary}\label{cor:formal_degree}
For the integer $s_\chi$ from Theorem \ref{thm:intro} we have
\[
Ns_\chi = -\#\Phi^\red + \#R_0^\red + \sum_{i=0}^{d-1} r_i(\#R_{i+1} - \#R_i).
\]
\end{corollary}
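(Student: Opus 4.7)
The plan is to deduce this formula directly from Proposition \ref{prop:computation_coho_degree} by relating $s_\chi$ to the cohomological degree $s_{\chi,r}$ at finite level. The key observation is that by \eqref{eq:homology_of_Y} (whose applicability in our setting is guaranteed by Corollary \ref{cor:transition_maps_Yr}), one has
\[
H_{s_\chi}(Y,\cool)[\chi] = H_c^{2d_r - s_\chi}(Y_r,\cool)[\chi]
\]
for all sufficiently large $r$, where $d_r = \dim Y_r$. Combined with Theorem \ref{main}, which gives non-vanishing in a unique degree $s_{\chi,r}$, this forces the relation
\[
s_\chi = 2d_r - s_{\chi,r}.
\]

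Next I would substitute the dimension formula from Lemma \ref{lm:properties_of_Yr}, namely
\[
d_r = \tfrac{1}{N}\bigl(r\cdot \#\Phi - \#\Phi^{\red}\bigr),
\]
together with the formula of Proposition \ref{prop:computation_coho_degree} for $Ns_{\chi,r}$. The terms $2r\cdot\#\Phi$ cancel between $2Nd_r$ and $Ns_{\chi,r}$, and the sign changes on $\#R_0^{\red}$ and the sum $\sum r_i(\#R_{i+1}-\#R_i)$ arise automatically from the subtraction; likewise the factor $2\#\Phi^{\red} - \#\Phi^{\red} = \#\Phi^{\red}$ produces the remaining $-\#\Phi^{\red}$ term. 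This yields exactly the claimed identity.

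The only nontrivial point is verifying that $r$ may be chosen large enough that both \eqref{eq:homology_of_Y} applies and the formula of Proposition \ref{prop:computation_coho_degree} is valid; both hold once $\chi$ factors through $(\BT_r^+)^F$, which is automatic for $r \geq r_\chi$. Beyond this, the proof is a bookkeeping exercise and presents no real obstacle: there is no genuine inductive or geometric content, only the arithmetic relating the cohomological and homological indexing conventions together with the explicit dimension count from Lemma \ref{lm:properties_of_Yr}.
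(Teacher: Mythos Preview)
Your proposal is correct and follows essentially the same route as the paper: both derive the relation $Ns_\chi = 2N\dim Y_r - Ns_{\chi,r}$ from the homology--cohomology comparison \eqref{eq:homology_of_Y}, then substitute the dimension formula of Lemma \ref{lm:properties_of_Yr} and the expression for $Ns_{\chi,r}$ from Proposition \ref{prop:computation_coho_degree}. The paper's version is simply terser, compressing the argument into two sentences.
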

\begin{proof}
By Lemma \ref{lm:properties_of_Yr}, $2N\dim Y_r = 2N(r \cdot \#\Delta - \#\Delta^\red) = 2(r \cdot \#\Phi - \#\Phi^\red)$. As $Ns_\chi = 2N\dim Y_r - Ns_{\chi,r}$, the claim follows.
\end{proof}

Note that when $\chi$ is sufficiently generic, this, combined with Corollary \ref{cor:dimension}, gives a formula for the formal degree of the corresponding supercuspidal representation. Moreover, note that the essential parts of the formulas of Corollary \ref{cor:formal_degree} and of \cite[Theorem A]{Schwein_24} agree.

\section{Traces}\label{sec:traces}

We combine Theorem \ref{main} with \cite[Lemma 2.12]{Boyarchenko_12} to express the traces of all $g \in \CG(\CO_k)$ on $H_{s_\chi}(Y,\cool)[\chi]$ in terms of the geometry of $Y_h$. In particular, we determine the dimension of $H_{s_\chi}(Y,\cool)[\chi]$ in terms of the non-vanishing degree $s_{\chi}$.

\begin{proposition}\label{cor:traces}
Let $\chi \colon \CT^+(\CO_k) \rar \cool^\times$ be a smooth character which factors through $(\BT_r^+)^F$. Let $g \in \CG^+(\CO_k)$ with image $\bar g \in (\BG_r^+)^F$. Then 
\[ {\rm tr}(\bar g, H_c^{s_{\chi,r}}(Y_r,\cool)[\chi]) = \frac{1}{\#(\BT_r^+)^F \cdot q^{s_{\chi,r} N/2}} \sum_{t \in (\BT_r^+)^F} \chi(t)\cdot \#S_{g,t}, \]
where $S_{g,t} = \{x \in Y_r(\obF) \colon gF^N(x) = x t\}$.
For $g=1$ this simplifies to
\[ \dim_{\cool} H_c^{s_{\chi,r}}(Y_r,\cool)[\chi] = \frac{\#(\BG_r^+)^F}{\#(\BT_r^+)^F \cdot q^{s_{\chi,r} N/2}}. \]
\end{proposition}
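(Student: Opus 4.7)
The strategy is to apply the Grothendieck--Lefschetz trace formula to a twisted Frobenius endomorphism of $Y_r$, and then invert via character orthogonality on $(\BT_r^+)^F$, in the spirit of \cite[Lemma 2.12]{Boyarchenko_12}. For $(g,t) \in (\BG_r^+)^F \times (\BT_r^+)^F$ I would consider $\phi_{g,t}\colon Y_r \to Y_r$, $x \mapsto g\,F^N(x)\,t^{-1}$, whose fixed-point set is tautologically $S_{g,t}$. Since $Y_r$ is the perfection of a smooth $\BF_{q^N}$-variety (Lemma \ref{lm:properties_of_Yr}) and both group actions are $\BF_{q^N}$-rational, Lefschetz gives
\[
\#S_{g,t} = \sum_i (-1)^i \tr\bigl(\phi_{g,t}^\ast,\, H_c^i(Y_r,\cool)\bigr).
\]

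The next step is to decompose $H_c^\ast(Y_r,\cool) = \bigoplus_{\chi'} H_c^\ast(Y_r,\cool)[\chi']$ under $(\BT_r^+)^F$ (a decomposition preserved by $\phi_{g,t}^\ast$) and to apply Theorem \ref{main}: the alternating sum in $i$ collapses to the single degree $s_{\chi',r}$, on which $F^{N,\ast}$ acts by $(-1)^{s_{\chi', r}} q^{s_{\chi', r} N/2}$, while right translation by $t^{-1}$ acts by the scalar $\chi'(t)$ (with the paper's convention for the $\chi'$-weight). After the two $(-1)^{s_{\chi',r}}$ factors cancel one arrives at
\[
\#S_{g,t} = \sum_{\chi'} \chi'(t)\, q^{s_{\chi', r} N/2}\, \tr\bigl(g,\, H_c^{s_{\chi', r}}(Y_r,\cool)[\chi']\bigr).
\]
Multiplying by $\chi(t)$, summing over $t \in (\BT_r^+)^F$, and using orthogonality of characters of the finite abelian group $(\BT_r^+)^F$ isolates the $\chi' = \chi$ term and yields the first formula.

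For the dimension formula, specializing to $g = 1$ reduces matters to showing $\sum_t \chi(t)\,\#S_{1,t} = \#(\BG_r^+)^F$. I would use the Lang map $L\colon Y_r \to \overline\BU_r \cap F\BU_r^+$, $L(x) = x^{-1} F(x)$, which is an \'etale $(\BG_r^+)^F$-torsor, together with the telescoping identity $x^{-1}F^N(x) = \Nm(L(x))$ where $\Nm(u) := u\,F(u)\cdots F^{N-1}(u)$. This rewrites
\[
\sum_t \chi(t)\,\#S_{1,t} = \#(\BG_r^+)^F \cdot \sum_{\substack{u \in (\overline\BU_r \cap F\BU_r^+)(\obF)\\ \Nm(u) \in (\BT_r^+)^F}} \chi(\Nm(u)).
\]
The essential ingredient is then a vanishing lemma: for $u \in \overline\BU_r \cap F\BU_r^+$ one has $\Nm(u) \in T \Rightarrow u = 1$. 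I expect to prove this by induction on Moy--Prasad depth; modulo the next stratum the product $\Nm(u)$ becomes the additive sum $\sum_i F^i(u)$, and the Coxeter hypothesis (each $F$-orbit on $\Phi$ has length $N$ and meets $\Delta$ in exactly one element) forces the supports of the $F^i(u)$'s in the affine root groups to be disjoint and to exhaust every positive affine root at that depth; a non-zero vectorial component of $u$ would therefore survive as a non-toral component of $\Nm(u)$. Hence only $u=1$ contributes and the inner sum equals $\chi(1)=1$, giving the claim.

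The main technical obstacle is exactly the vanishing lemma: beyond its leading-order form one must check that commutator corrections to $\Nm(u)$ coming from non-commutativity in the pro-unipotent group $\BG_r^+$ land in strictly deeper Moy--Prasad strata, so that the induction on depth is not disturbed. A secondary (but routine) concern is the careful bookkeeping of the pullback versus left/right action sign conventions in the Lefschetz step so that the final formula reads $\tr(g,\cdot)$ with the precise normalization stated in the proposition.
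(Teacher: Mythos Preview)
Your proposal is correct and follows essentially the same approach as the paper. The first formula is deduced from Theorem \ref{main} via \cite[Lemma 2.12]{Boyarchenko_12}, and for $g=1$ the paper proves exactly your vanishing lemma (that $\prod_{i=0}^{N-1}F^i(u)\in \BT_r^+$ with $u\in\overline\BU_r\cap F\BU_r^+$ forces $u=1$) by induction removing one $F$-orbit of affine roots at a time, noting that the successive quotients are abelian so your commutator concern dissolves automatically.
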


\begin{proof}
The first statement follows from Theorem \ref{main} by \cite[Lemma 2.12]{Boyarchenko_12}. For the second statement we have to compute the trace for $g=1$. Therefore, let $x \in S_{1,t}$ for some $t \in (\BT_h^+)^F$ and put $u = x^{-1}F(x)$. Then $x \in S_{1,t}$ implies $t = x^{-1}F^N(x) = \prod_{i=0}^{N-1} F^i(u)$. We claim that this implies $t=u=1$. 
Let $A \subseteq \widetilde\Phi^+$ be an $F$-stable and closed subset. Suppose that we have already shown that $t,u \in \BG_r^A$. Let $f \in A$ be such that $f({\bf x})$ is minimal among all roots in $A$. Then $A \sm \CO_f \subseteq A$ is $F$-stable and closed, and $A + A \subseteq A \sm \CO_f$, so that $\BG_h^{A\sm \CO_f} \subseteq \BG_h^A$ is normal with abelian quotient. By induction on $A$ it suffices to show that $t,u \in \BG_h^{A \sm \CO_f}$. Let $\bar t,\bar u \in \BG_h^A / \BG_h^{A \sm \CO_f}$ denote the images of $t,u$. If $f \in \BZ_{>0}$, then $\bar u = 1$ and hence also $\bar t = 1$, so that we are done. If $f \not\in \BZ_{>0}$, then $t = 1$ and $\BG_h^A / \BG_h^{A \sm \CO_f} \cong \prod_{i=0}^{N-1} \BG_a$, with $F$-action given by $F((x_i)_{i=0}^{N-1}) = (x_{i-1}^q)_{i=0}^{N-1}$ (the $i$th copy of $\BG_a$ corresponds to $F^i(f)$). Now, as $u \in \overline\BU_h \cap F\BU_h^-$ by assumption, $\bar u$ corresponds under this isomorphism to an element of the form $(a,0, \dots, 0)$ with $a\in \BG_a$, and the equation $\prod_{i=0}^{N-1} F^i(\bar u) = 1$ in $\BG_h^A / \BG_h^{A \sm \CO_f}$ thus corresponds to $(a,a^q, \dots, a^{q^{N-1}}) = 0$. Thus $a = 0$, i.e., $\bar u = 1$ and our original claim follows by induction on $A$. The claim immediately implies $S_{1,t} = \varnothing$ unless $t=1$ and $S_{1,1} = (\BG_h^+)^F$ which proves the proposition.
\end{proof}

\begin{proof}[Proof of Corollary \ref{cor:dimension}]

Let $r \in \BZ_{\ge 1}$ such that $\chi$ factors through $\BT_r^+$. It follows from \S\ref{sec:homology_in_general} that $s_{\chi,r} = 2\dim(Y_r) - s_\chi = 2\dim (\overline\BU_r^+ \cap F\BU_r^+) - s_\chi$. Note that \[N \dim (\overline\BU_r^+ \cap F\BU_r^+) = \# (\tPhi_r^+ \cap \tPhi_\aff) = \dim \BG_r^+ - \dim \BT_r^+.\] Thus
\begin{align*}
q^{s_{\chi,r}N/2} &= q^{N\dim (\overline\BU_r^+ \cap F\BU_r^+) - \frac{s_\chi N}{2}} = q^{\dim (\BG_r^+ / \BT_r^+) - \frac{s_\chi N}{2}} \\ &= \frac{\#(\BG_r^+)^F}{\#(\BT_r^+)^F} \cdot q^{-s_\chi N/2}.
\end{align*}
Inserting this into the second formula of Proposition \ref{cor:traces} gives the result.
\end{proof}

\begin{corollary} \label{max}
    Assume that $p$ satisfies Condition \ref{hypo}. The varieties $Y_f$ for $f \in \tPhi_r^+$ are maximal. In particular, the varieties $X_r^{(\BT_{0+})}$ for $r \in \BZ_{\ge 0}$ are maximal.
\end{corollary}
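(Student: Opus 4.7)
The plan is to extract $\BF_{q^N}$-maximality directly from the Frobenius eigenvalue statement of Theorem \ref{main}. Recall from \cite{BoyarchenkoW_16} that a variety $V$ over $\BF_{q^N}$ is maximal precisely when $F^N$ acts on each $H^i_c(V,\cool)$ by the scalar $(-q^{N/2})^i$; so the content of the corollary reduces to verifying exactly this scalar action on $H^i_c(Y_f,\cool)$ and on $H^i_c(X_r^{(\BT_{0+})},\cool)$.

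For the first assertion, I would decompose
\[
H^i_c(Y_f,\cool) = \bigoplus_\chi H^i_c(Y_f,\cool)[\chi]
\]
into $(\BT_f^+)^F$-weight spaces, the sum being over characters $\chi$ of the finite abelian group $(\BT_f^+)^F$. This decomposition is $F^N$-stable because the torus action commutes with $F^N$. By Theorem \ref{main}, the $\chi$-summand vanishes unless $i = s_{f,\chi}$, and on the surviving summand $F^N$ acts by the scalar $(-1)^{s_{f,\chi}} q^{s_{f,\chi} N/2} = (-q^{N/2})^i$. Collecting all $\chi$ with $s_{f,\chi}=i$, one concludes that $F^N$ acts on the whole $H^i_c(Y_f,\cool)$ by the scalar $(-q^{N/2})^i$, which is the maximality condition for $Y_f$.

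For the ``in particular'' part, I would invoke Lemma \ref{lm:decomposition_closed_Drinfeld_stratum} at level $r$ to write
\[
X_r^{(\BT_{0+})} = \bigsqcup_{g \in \BG_{0+}^F} g Y_r.
\]
Since $g$ is $F$-fixed, left multiplication by $g$ yields an $F$-equivariant isomorphism $Y_r \cong g Y_r$, so the cohomology of $X_r^{(\BT_{0+})}$ is a finite direct sum of copies of $H^*_c(Y_r,\cool)$ carrying identical $F^N$-actions. Running the same weight-space argument for $(\BT_r^+)^F$ acting on $Y_r$, Theorem \ref{main} once more forces $F^N$ to act on $H^i_c(Y_r,\cool)$ by $(-q^{N/2})^i$, whence $Y_r$ is $\BF_{q^N}$-maximal, and so is $X_r^{(\BT_{0+})}$.

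No real obstacle is expected: Theorem \ref{main} supplies both the single-degree concentration and the precise Frobenius scalar, so this corollary is essentially a dictionary translation into the language of \cite{BoyarchenkoW_16}. The only modest care-point is verifying that the disjoint-union decomposition of the closed Drinfeld stratum is $F$-equivariant, which is immediate from the fact that the index set $\BG_{0+}^F$ consists of $F$-fixed points.
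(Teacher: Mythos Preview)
Your overall strategy---decompose $H^i_c(Y_f,\cool)$ into $(\BT_f^+)^F$-weight spaces and read off the Frobenius scalar from Theorem \ref{main}---is exactly what the paper does. However, you skip one step that the paper treats as essential. The paper's working criterion for maximality is that $F^N$ act on $H^s_c$ by $(-1)^s q^{sN/2}$ \emph{with $sN$ even}; the parity condition is what guarantees that the scalar is an unambiguous rational integer rather than a formal expression involving a choice of $\sqrt{q}$. Without it, when $q$ is not a square and $sN$ is odd, the putative Weil--Deligne bound $\sum_s b_s\, q^{sN/2}$ is irrational and cannot be attained by the point count, so maximality would fail.

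Theorem \ref{main} by itself does not assert this parity. The paper supplies it by a separate argument: via Proposition \ref{prop:computation_coho_degree} the integer $s_{f,\chi}$ is given by a purely combinatorial formula independent of $q$, so one may assume $q$ is prime; then Corollary \ref{cor:dimension} gives $\dim H^{s_{\chi,r}}_c(Y_r,\cool)[\chi] = q^{s_\chi N/2}$, and integrality of the dimension forces $s_\chi N$ even, whence $sN = 2N\dim Y_r - s_\chi N$ is even as well. Your write-up should include this (or an equivalent) verification. The ``in particular'' part via Lemma \ref{lm:decomposition_closed_Drinfeld_stratum} is fine.
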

\begin{proof}
    By definition we need to show that either $H_c^s(Y_f, \overline \BQ_\ell) = 0$ or $F^N$ acts on $H_c^s(Y_f, \overline \BQ_\ell)$ by the scalar $(-1)^s q^{sN/2}$ with $sN$ even. By Proposition \ref{fiber}, we can replace $f$ with $h = \min\{n \in \BZ: n \ge f\}$. 

    Assume that $H_c^s(Y_h, \overline \BQ_\ell) \neq 0$. Then there exists a character $\chi$ of $\BT_h^+$ such that $H_c^s(Y_h, \overline \BQ_\ell)[\chi] \neq 0$. By Theorem \ref{main}, $s = s_{h, \chi}$ and $F^N$ acts on $H_c^s(Y_h, \overline \BQ_\ell)[\chi]$ by the scalar $(-1)^s q^{sN/2}$. It remains to show $sN$ is even. In view of Proposition \ref{prop:computation_coho_degree}, this question is combinatorial and we may assume that $q$ is a suitable prime number. Then it follows from Corollary \ref{cor:dimension} that $s_\chi N$ is even. As $s = 2\dim Y_h - s_\chi$, we deduce that $s N$ is even as desired. \qedhere
    
\end{proof}

\section{Irreducibility}\label{sec:irreducibility}
\emph{Until the end of this article we assume that $(T, U)$ is a Coxeter pair.} 

\smallskip

Recall the minimal Drinfeld stratum $X^{(\BT_{0+})}$ of $X\subseteq \BG$ from \S\ref{sec:Drinfeld_strat}. We have its subscheme $Y$ and the slightly bigger subscheme  
\[ 
Z = X^{(\BT_{0+})} \cap \BT \BG^+ = \{g \in \BT \BG^+ \colon g\i F(g) \in \overline\BU \cap F\BU \}
\] 
We have the corresponding approximations $Y_r \subseteq \BG_r^+$ and $Z_r \subseteq \BT_r \BG_r^+$; $Y_r$ is equipped with an $(\BG_r^+)^F \times (\BT_r^+)^F$-action and $Z_r$ is equipped with an $(\BT_r\BG_r^+)^F \times \BT_r^F$-action.

In Theorem \ref{main} we have seen that for any $\chi \colon (\BT_r^+)^F \rar \cool^\times$, $H^\ast_c(Y_r)[\chi]$ is concentrated in one degree. By Lemma \ref{lm:decomposition_closed_Drinfeld_stratum}, the same holds also for $Z_r$ for any character $\chi \colon \BT_r^F \rar \cool^\times$. Now we prove that these weight spaces are irreducible as $\BG_r^F$ (resp. $(\BG_r^+)^F$-)representations and pairwise distinct.

\begin{theorem}\label{thm:irred}
For any $\chi,\chi' \colon \BT_r^F \rar \cool^\times$ we have
\[
\left\langle H^\ast_c(Z_r)[\chi],H^\ast_c(Z_r)[\chi'] \right\rangle_{(\BT_r \BG_r^+)^F} = \begin{cases} 1 & \text{if $\chi = \chi'$,} \\ 0 & \text{otherwise.} \end{cases}
\]
The same holds for $Y_r$, when $(\BT_r)^F$, $(\BT_r \BG_r^+)^F$ are replaced by $(\BT_r^+)^F$, $(\BG_r^+)^F$.
\end{theorem}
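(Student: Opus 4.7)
The theorem splits cleanly into two independent cases.

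For $\chi \ne \chi'$, orthogonality is formal. Any $(\BG_r^+)^F$-equivariant morphism between $H^i_c(Y_r, \cool)[\chi]$ and $H^j_c(Y_r, \cool)[\chi']$ is, a fortiori, $(\BT_r^+)^F$-equivariant; since the source is entirely $\chi$-isotypic and the target entirely $\chi'$-isotypic for that subgroup, any such morphism vanishes. Every term in the alternating sum defining the inner product is therefore zero. The same argument handles $Z_r$ with $(\BT_r^+)^F$ replaced by $\BT_r^F$.

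The heart of the proof is the case $\chi = \chi'$. By Theorem \ref{main}, $H^*_c(Y_r, \cool)[\chi]$ is concentrated in the single degree $s_{\chi,r}$, so the inner product equals $\dim_{\cool} \End_{(\BG_r^+)^F}(H^{s_{\chi,r}}_c(Y_r, \cool)[\chi])$, and the claim reduces to absolute irreducibility of this representation. I would first rewrite this dimension cohomologically as
\[
\sum_i (-1)^i \dim H^i_c\bigl((\BG_r^+)^F \backslash (Y_r \times Y_r),\, \CL_{\chi \boxtimes \chi^{-1}}\bigr),
\]
and parametrize the diagonal quotient via the Lang-Steinberg map $(y_1, y_2) \mapsto (g, u) := (y_1^{-1} y_2,\, y_1^{-1} F(y_1))$, which identifies it with
\[
\Sigma = \bigl\{(g, u) \in \BG_r^+ \times (\overline\BU_r \cap F\BU_r^+) : g^{-1} u F(g) \in \overline\BU_r \cap F\BU_r^+\bigr\},
\]
with residual action $(t_1, t_2) \cdot (g, u) = (t_1^{-1} g t_2,\, t_1^{-1} u t_1)$.

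The crucial step is then to invoke Steinberg's cross-section (Proposition \ref{St}) for $\BU_r^+$. Combined with the pro-unipotent big-cell decomposition $\BG_r^+ = \BU_r^+ \BT_r^+ \overline\BU_r^+$, writing $g = v_+ t v_-$ lets one successively eliminate the unipotent variables: the cross-section isomorphism $\BU_r^+ F\BU_r^+ \cong \BU_r^+ \times (\overline\BU_r \cap F\BU_r^+)$ expresses the relevant unipotent pieces uniquely in terms of $u$, while the defining condition on $g^{-1}uF(g)$ cuts out an iterated affine bundle over the torus coordinate $t \in \BT_r^+$. The residual $(\BT_r^+)^F \times (\BT_r^+)^F$-action on $t$ is $(t_1, t_2) \cdot t = t_1^{-1} t t_2$, and integration against $\chi \boxtimes \chi^{-1}$ collapses to the locus $t = 1$, contributing a single factor of $1$ from the Euler characteristic of the affine fibers. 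The $Z_r$ case reduces to the $Y_r$ case using a decomposition of $Z_r$ as $\BT_r^F / (\BT_r^+)^F$-many $Y_r$-translates, analogous to Lemma \ref{lm:decomposition_closed_Drinfeld_stratum}, together with Mackey/Frobenius reciprocity for the finite abelian quotient.

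The principal obstacle is to rigorously handle the big-cell decomposition of the pro-unipotent $\BG_r^+$ and to verify that non-big-cell strata contribute trivially to the $(\chi, \chi^{-1})$-isotypic cohomology. Proposition \ref{St} is precisely what makes this tractable, and it is the unique place where the Coxeter hypothesis on $(T, U)$ is used; this is why, as noted in Remark \ref{minimal}, the theorem should extend to minimal elliptic pairs once Malten's generalization of Proposition \ref{St} is invoked.
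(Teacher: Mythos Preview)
Your argument for $\chi \neq \chi'$ conflates the two $(\BT_r^+)^F$-actions on $H_c^*(Y_r)$. The isotypic component $[\chi]$ is cut out by the \emph{right} action on $Y_r$, whereas the inclusion $(\BT_r^+)^F \subseteq (\BG_r^+)^F$ through which you restrict gives the \emph{left} action. These differ by conjugation, which is a nontrivial automorphism of $Y_r$ with no evident extension to a connected group preserving $Y_r$; so one cannot conclude that $H_c^*(Y_r)[\chi]$ is $\chi$-isotypic for the left action. The paper does not separate the two cases: it computes $H_c^*(\Sigma)$ as a $\BT_r^F \times \BT_r^F$-module in one stroke and reads off both statements.

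For $\chi = \chi'$, your passage to $\Sigma$ and the use of the Iwahori decomposition together with Proposition~\ref{St} match the paper, but the assertion that what remains is ``an iterated affine bundle over $t \in \BT_r^+$'' which then ``collapses to $t=1$'' is precisely the hard step and is left unjustified. After the changes of variable one reaches (in the paper's coordinates)
\[
\Sigma \cong \{(\tau, u_1, y_1) : \tau F(\tau)^{-1} = \pr_0(F(u_1 y_1))\},
\]
whose fibers over $\tau$ are level sets of an opaque map to $\BT_r^+$ and are not visibly affine. The idea you are missing is an auxiliary action of the \emph{full} torus $\BT_r$ on $\Sigma$, given by conjugating $(u_1, y_1)$ and fixing $\tau$; this commutes with the $\BT_r^F \times \BT_r^F$-action, and the fixed locus of the reductive part $\BT_{r,\red}$ is $\{u_1 = y_1 = 1\} \cong \BT_r^F$, whence $H_c^*(\Sigma) \cong H_c^*(\BT_r^F)$ equivariantly by the usual localization argument.

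Two smaller points: invoking Theorem~\ref{main} is unnecessary (your Euler-characteristic expression already equals the inner product) and would impose Condition~\ref{hypo}, which the paper's proof does not need; and there are no ``non-big-cell strata'' to worry about, since for the pro-unipotent group the Iwahori factorization $\BG_r^+ \cong \BU_r^+ \times \BT_r^+ \times \overline\BU_r^+$ is a global isomorphism of varieties, not merely an open immersion.
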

\begin{proof}
Let 
\[\Sigma = \{(y, x, x') \in \BT_r \BG_r^+ \times (\overline\BU_r^+ \cap F\BU_r^+) \times (\overline\BU_r^+ \cap F\BU_r^+); y\i x F(y) = x'\},\]
equipped with $\BT_r^F\times \BT_r^F$-action by $(t,t') \colon (y,x,x') \mapsto (tyt^{\prime -1},txt\i,t'x't^{\prime -1})$. As in \cite[\S6.6]{DeligneL_76} we have $\Sigma \cong (\BT_r \BG_r^+)^F \backslash (Z_r \times Z_r)$. It thus suffices to show that $H^\ast_c(\Sigma) \cong H_c^*(\BT_r^F)$. 

By Iwahori decomposition we have $y = \t y_+ y_-$ with $y_+ \in \BU_r^+$, $\t \in \BT_r$ and $y_- \in \overline\BU_r^+$. Then the equality $y\i x F(y) = x'$ is equivalent to \[\tag{a} y_+\i \t\i x F(\t) F(y_+) F(y_-) = y_- x'.\] 

By Theorem \ref{St} (2) there is a unique pair $(u_1, u_2) \in (\BU_r^+ \cap F\i\overline\BU_r^+)  \times \BU_r^+$ such that \[\tag{*} y_+\i \t\i x F(\t) F(y_+) = u_2 \t\i F(\t) F(u_1),\] and moreover, the correspondence $(\t, x, y_+) \mapsto (\t, u_1, u_2)$ gives an isomorphism \[\BT_r \times (\overline\BU_r^+ \cap \BU_r^+) \times \BU_r^+ \cong \BT_r \times (\BU_r^+ \cap F\i\overline\BU_r^+) \times \BU_r^+.\] Now the equality (a) becomes \[\tag{b} u_2 \t\i F(\t) F(u_1) F(y_-) = y_- x'.\] 

Write $y_- = y_1 y_2$ with $y_1 \in \overline\BU_r^+ \cap F\i(\BU_r^+)$ and $y_2 \in \overline\BU_r^+ \cap F\i(\overline\BU_r^+)$. By Theorem \ref{St} (1), the map $(x', y_2) \mapsto u_- := y_2 x' F(y_2)\i$ gives an isomorphism $(F\BU_r^+ \cap \overline\BU_r^+) \times (\overline\BU_r^+ \cap F\i\overline\BU_r^+) \cong \overline\BU_r^+$. Thus the equality (b) becomes \[\tag{c} u_2 \t\i F(\t) F(u_1 y_1) = y_1 u_-.\]

Write $u_1 y_1 = z_1 z_0 z_2$ with $z_1 \in F\i(\BU_r^+)$, $z_0 \in \BT_r$ and $z_2 \in F\i\overline\BU_r^+$. Then the equality (c) becomes \[\tag{d} u_2 {}^{\t\i F(\t)} F(z_1) \t\i F(\t) F(z_0) F(z_2) = y_1 u_-.\] It follows from (d) that $u_2 = {}^{\t\i F(\t)} F(z_1)\i$, $\t\i F(\t) = F(z_0)\i$ and $u_- = y_1\i F(z_2)$. Thus we deduce that \[\tag{e} \Sigma \cong \{(\t, u_1, y_1) \in \BT_r \times (\BU_r^+ \cap F\i\overline\BU^+) \times (\overline\BU_r^+ \cap F\i\BU_r^+); \t F(\t)\i = \pr_0(F(u_1 y_1))\},\] where $\pr_0: \BT_r\BG_r^+ \cong \BU_r^+ \times \BT_r \times \overline\BU_r^+ \to \BT_r$ is the natural projection.

Note that $(\z, \xi) \in \BT_r^F \times \BT_r^F$ acts on $\Sigma$ by $(y, x, x')\mapsto (\z y \xi\i, \z x \z\i, \xi x' \xi\i)$. Then $(\z, \xi)$ sends $(\t, x, y_+, y_-)$ to $(\t \z\xi\i, \z x \z\i, \xi y_+ \xi\i, \xi y_- \xi\i)$. Using the relation (*) we see that $(\z, \xi)$ sends $(u_1, u_2)$ to $(\xi u_1 \xi\i, \xi u_2 \xi\i)$. Therefore, in view of (e), $(\z, \xi)$ acts on $\Sigma$ by sending $(\t, u_1, y_1)$ to $(\t \z \xi\i, \xi u_1 \xi\i, \xi y_1 \xi\i)$.

Let $\eta \in \BT_r$. Consider the action of $\eta$ on $\Sigma$ by sending $(\t, u_1, y_1)$ to $(\t, \eta u_1 \eta\i, \eta y_1 \eta\i)$. Then the action of $\BT_r$ commutes with the action of $\BT_r^F \times \BT_r^F$. Thus, we have an $\BT_r^F \times \BT_r^F$-equivariant isomorphism \[H_c^*(\Sigma) \cong H_c^*(\Sigma^{\BT_{r, {\rm red}}}) \cong H_c^*(\BT_r^F),\]
where $\BT_{r, {\rm red}}$ denotes the reductive part of $\BT_r$. Now the statement follows. The proof for $Y_r$ is the same.
\end{proof}

\section{Relation to the orbit method}\label{sec:Kirillow}

Let $r \in \BZ_{\geq 1} \cup \{\infty\}$. We have the groups $\BG_r^+$ and $\BT_r^+$ and the variety $Y_r$ with $(\BG_r^+)^F \times (\BT_r^+)^F$-action (where we put $Y_\infty=Y$, $\BG_\infty^+ = \prolim_r \BG_r^+$ and similarly for $\BT_\infty^+$). Theorems \ref{main} and \ref{thm:irred} show that $H_c^{s_{\chi,r}}(Y_r,\cool)[\chi]$ is an irreducible $(\BG_r^+)^F$-representation. On the other hand, if either $r<p$, or $r = \infty$ and $(\BG_r^+)^F$ is uniform (see below), Kirillov's orbit method attaches irreducible $(\BG_r^+)^F$-representations to adjoint $(\BG_r^+)^F$-orbits in the dual of the Lie algebra of $(\BG_r^+)^F$. We state a conjecture about the relation between these two constructions and verify it in a non-trivial case.

\subsection{Review of the orbit method}\label{sec:review_orbit}
The orbit method was originally developed by Kirillov \cite{Kirillov_62} and extended later to various related setups. We briefly recall it in the two setups relevant for our article. We refer to \cite{BoyarchenkoS_08} (in particular, \S2 therein), \cite[\S2]{BoyarchenkoD_10} and \cite{DDMS_99} and references therein for more detailed discussions. 

Assume that $p>2$.\footnote{This assumption can be weakened at the cost of more technical results.} For the first setup, recall that a \emph{uniform} Lie algebra is a (topological) Lie algebra $\fkg$ over $\BZ_p$, which is free of finite rank as a $\BZ_p$-module and satisfies $[\fkg,\fkg] \subseteq p\fkg$.
Following Lazard, there is a pro-$p$ group $\Gamma = \exp \fkg$ attached to $\fkg$, whose underlying topological space is $\fkg$ and on which the group law is defined (via $\exp$ and $\log$) by the Campbell--Hausdorff series. For $\Gamma = \exp \fkg$, one has mutually inverse homeomorphisms $\exp \colon \Gamma \rar \fkg$ and $\log \colon \fkg \rar \Gamma$. Set up appropriately, the functor $\fkg \mapsto \exp \fkg$ even defines an isomorphism of categories. We denote the inverse functor by $\Gamma \mapsto \log \Gamma$. A profinite group $\Gamma$ is called \emph{uniform} (short for \emph{uniformly powerful}) if there is a uniform Lie-algebra $\fkg$ with $\Gamma \cong \exp \fkg$. There is a similar isomorphism of categories between finite $p$-groups $\Gamma$ of nilpotence class $<p$ and finite nilpotent Lie rings $\fkg$ of $p$-power order and nilpotence class $<p$. We use the same notation as in the uniform pro-$p$ case.

For the moment, let $\Gamma$ be either
\begin{itemize}
\item[(i)] a uniform pro-$p$ group, or
\item[(ii)] a finite $p$-group of nilpotence class $<p$. 
\end{itemize}
Let $\fkg = \log \Gamma$ denote the corresponding uniform Lie $\BZ_p$-algebra resp. finite Lie ring. Let $\widehat \Gamma$ denote the set of isomorphism classes of smooth irreducible $\cool$-representations of $\Gamma$.
Note that there is an adjoint action of $\Gamma$ on $\fkg$. More precisely, for any $g \in \Gamma$ we have the automorphism $\Ad\, g \colon \fkg \rar \fkg$ given by $x \mapsto \log(g\exp(x)g^{-1})$. Let 
\[
\fkg^\ast = \Hom_{\rm cont}(\fkg,\cool^\times). 
\]
be the dual of $\fkg$. The adjoint action of $\Gamma$ on $\fkg$ induces an action of $\Gamma$ on $\fkg^\ast$. Kirillov's orbit method, in the present setup established in \cite{BoyarchenkoS_08}, describes a natural bijection between $\widehat \Gamma$ and the set of $\Gamma$-orbits in $\fkg^\ast$.

\begin{theorem}[Theorem 2.6 in \cite{BoyarchenkoS_08}]\label{thm:orbit_method}
Assume $p\geq 3$ and $\Gamma$ is either a uniform pro-$p$-group or a $p$-group of nilpotence class $<p$ and let $\fkg = {\rm Lie}\,  \Gamma$. Then there exists a bijection $\Omega \leftrightarrow \rho_\Omega$  between $\Gamma$-orbits $\Omega \subseteq \fkg^\ast$ and $\widehat \Gamma$, characterized by
\[ \tr(g,\rho_\Omega) = \frac{1}{\#\Omega^{1/2}} \cdot \sum_{f\in \Omega} f(\log(g)).
\]
\end{theorem}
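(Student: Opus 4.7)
The plan is to adapt Kirillov's classical strategy, first reducing to the case where $\Gamma$ is a finite $p$-group of nilpotence class less than $p$. In the uniform pro-$p$ case, smooth irreducible $\cool$-representations factor through some finite quotient $\Gamma/\Gamma^{p^k}$, which still has nilpotence class less than $p$ once $k$ is large enough; since both sides of the trace formula are compatible with inflation along such quotients, I would establish the bijection in the finite setting and then pass to the inverse limit.

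For each $f \in \fkg^\ast$ in the finite case I would build $\rho_f$ by polarization. The alternating pairing $B_f(x,y) = f([x,y])$ on $\fkg$ has radical $\fkr_f = \{x \in \fkg : f\text{ is trivial on }[x,\fkg]\}$, itself a Lie subalgebra. A Vergne-style inductive construction produces a \emph{polarization} $\fkh \supseteq \fkr_f$: a Lie subalgebra which is maximal isotropic for $B_f$. Since $f$ is then trivial on $[\fkh,\fkh]$, the Baker--Campbell--Hausdorff formula shows that $\chi_f(\exp x) := f(x)$ defines a character of $H = \exp \fkh$, and one sets $\rho_f = \mathrm{Ind}_H^{\Gamma}\chi_f$. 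The four key verifications are then standard: (i) \emph{irreducibility} by Mackey's criterion, using that $f - \Ad^\ast(g) f$ is non-trivial on $\fkh \cap \Ad(g)\fkh$ for $g \notin H$ thanks to maximality of $\fkh$; (ii) \emph{independence of the choice of polarization} by an intermediate-subgroup comparison; (iii) \emph{orbit equivalence} $\rho_f \cong \rho_{f'}$ iff $f' \in \Gamma \cdot f$ via the same intertwining calculation; and (iv) \emph{exhaustion} of $\widehat{\Gamma}$ by combining $\dim \rho_\Omega = \#\Omega^{1/2}$ (which follows from the dimension formula $\dim H = \tfrac{1}{2}(\dim \fkg + \dim \fkr_f)$ for a polarization) with the Plancherel identity $\sum_{\rho \in \widehat{\Gamma}}(\dim\rho)^2 = \#\Gamma = \#\fkg^\ast = \sum_{\Omega}\#\Omega$.

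The remaining and most delicate task is the trace formula itself. Starting from the induced-character identity
\[
\tr(g,\rho_f) = \frac{1}{\#H} \sum_{\substack{x \in \Gamma \\ x^{-1} g x \in H}} f\bigl(\log(x^{-1} g x)\bigr),
\]
I would first rewrite each summand as $(\Ad^\ast(x) f)(\log g)$ and then group the sum according to the image of $x$ in $\Gamma/\Gamma_f$, converting the prefactor to $\#\Omega^{-1/2}$ via the size relations $\#\Omega \cdot \#\Gamma_f = \#\Gamma$ and $\#H^2 = \#\Gamma \cdot \#\Gamma_f$. The subtle point, and the one I expect to be the main obstacle, is that the sum on the left runs only over $x$ satisfying $x^{-1} g x \in H$, whereas the target expression $\tfrac{1}{\#\Omega^{1/2}} \sum_{f' \in \Omega} f'(\log g)$ imposes no such restriction; removing this constraint requires showing via Fourier analysis on $\fkh/[\fkh,\fkh]$ that the contributions from $x$ with $x^{-1} g x \notin H$ cancel after summing over appropriate cosets, a cancellation which ultimately reflects the fact that a polarization is its own orthogonal complement under $B_f$. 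Taking the inverse limit over the quotients $\Gamma/\Gamma^{p^k}$ then yields the uniform pro-$p$ case.
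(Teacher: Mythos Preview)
The paper does not prove this theorem at all: it is quoted verbatim as ``Theorem 2.6 in \cite{BoyarchenkoS_08}'' and used as a black box in \S\ref{sec:Kirillow}. There is therefore nothing to compare your proposal against.

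That said, your sketch contains a genuine error in the reduction step. You claim that in the uniform pro-$p$ case one may pass to a finite quotient $\Gamma/\Gamma^{p^k}$ ``which still has nilpotence class less than $p$ once $k$ is large enough''. This is backwards: for a uniform pro-$p$ group the lower central series satisfies $\gamma_{i+1}(\Gamma) \subseteq \Gamma^{p^i}$, so the nilpotence class of $\Gamma/\Gamma^{p^k}$ typically \emph{grows} with $k$ and will exceed $p$ for large $k$ (take for instance $\Gamma = 1 + pM_2(\BZ_p)$ with $p=3$). Thus you cannot reduce the uniform case to the finite nilpotence-class-$<p$ case in this way. The actual argument in \cite{BoyarchenkoS_08} treats the uniform case on its own footing, exploiting the Lazard correspondence directly (the Campbell--Hausdorff series converges $p$-adically on all of $\fkg$, not because of a nilpotence bound but because $[\fkg,\fkg]\subseteq p\fkg$), and the compatibility with finite quotients is handled via the Lie-theoretic filtration $p^k\fkg$ rather than by invoking the finite-group theorem.

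The remainder of your outline (polarization, Mackey irreducibility, exhaustion via dimension count, and the Fourier-cancellation argument for the character formula) is the standard Kirillov architecture and is broadly correct for the finite case, though the details of the cancellation you flag as ``the main obstacle'' are indeed where most of the work lies.
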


Groups of the form $\Gamma = (\BG_r^+)^F$ may or may not satisfy the assumptions of Theorem \ref{thm:orbit_method}, as the following examples show.

\begin{example}\label{ex:when_orbit_method_applies_profinite}
Suppose $r = \infty$. Then $\Gamma = (\BG_r^+)^F$ is the maximal pro-$p$ subgroup of the parahoric group $\CG(\CO_k)$. If $\charac k = p$, $\Gamma$ always contains torsion, and hence is never uniform. Suppose now that $\charac k = 0$. Then $\Gamma$ might or might not be uniform. For example, $1 + p M_n(\BZ_p)$ is uniform. On the other hand, if $k/\BQ_p$ has ramification index $e>1$, then $1+\varpi M_n(\CO_k)$ is not uniform. In general, it is true that a topological group has the structure of a $p$-adic Lie group if and only if it contains an open uniform subgroup \cite[Theorems 8.1 and 4.2]{DDMS_99}.
\end{example}

\begin{example}\label{ex:when_orbit_method_applies_finite}
Suppose $r<\infty$. If $\bfx$ is hyperspecial, $\Gamma = (\BG_r^+)^F$ is of nilpotency class $\leq r-1$ (as $f(\bfx)$ is integral for all $f \in \widetilde\Phi$, $\BG_r^+ = \BG_r^1$, and the subgroups $\{\BG_r^i\}_{i=1}^{r}$ form a central series of length $r-1$). Thus if $r \leq p$, the orbit method applies to the finite $p$-group $\Gamma$. In contrast to Example \ref{ex:when_orbit_method_applies_profinite}, there is no assumption on the characteristic of $k$.
\end{example}

\subsection{Cohomological induction vs. the orbit method}\label{sec:coho_vs_orbit}
For brevity we write $\Gamma = (\BG_r^+)^F$ and  $\Upupsilon = (\BT_r^+)^F$. Note that $\Upupsilon$ satisfies condition (i) or (ii) in \S\ref{sec:review_orbit} and let $\fkt = \log \Upupsilon$ denote its Lie algebra. As $\Upupsilon$ is abelian, $\exp_\Upupsilon \colon \fkt \rar \Upupsilon$ is not only a homeomorphism, but also an isomorphism of groups with inverse $\log_\Upupsilon$. Also, as $\Upupsilon$ is abelian, we may identify $\widehat \Upupsilon$ with $\Upupsilon^\ast := \Hom_{\rm cont}(\Upupsilon,\cool^\times)$. By Theorem \ref{thm:intro} we get a map 
\[
R_{\log} \colon \fkt^\ast \stackrel{\log_\Upupsilon^\ast}{\longrightarrow} \Upupsilon^\ast \rar \widehat \Gamma, 
\]
where the second map is 
\[
\chi \mapsto (-1)^{s_\chi} H_{s_\chi}(Y,\cool)[\chi]
\]
On the other hand, Theorem \ref{thm:orbit_method} gives a map
\[
\rho \colon \fkg^\ast \rar \fkg^\ast/\Ad \, \Gamma \stackrel{\sim}{\rar} \widehat \Gamma
\]
where the first arrow is the natural projection. It is a natural question, how these two maps are related, and we make the following conjecture in this direction. Note that there is a \emph{canonical} projection 
\[
\delta \colon \fkg \tar \fkt 
\] 
(as on the level of (geometric points of) the Lie algebras, $\fkt$ is the weight $0$ subspace of the adjoint representation of $\Upupsilon$ on $\fkg$; then one takes Frobenius invariants). Let $\delta^\ast \colon \fkt^\ast \rar \fkg^\ast$ be the dual map.

\begin{conjecture}\label{conj:relation_to_orbit_method}
We have $\rho \circ \delta^\ast = R_{\rm log}$. 
\end{conjecture}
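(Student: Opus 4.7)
The plan is to reduce the conjecture to a character identity and then verify that identity by induction on the Howe factorization of $\chi$, in parallel with the inductive strategy used in the proof of Theorem \ref{main}. Since $R_{\log}(\ell)$ is (up to the sign $(-1)^{s_\chi}$) an irreducible $\Gamma$-representation by Theorems \ref{thm:intro}(3) and \ref{thm:irred}, and $\rho(\delta^*(\ell))$ is irreducible by Theorem \ref{thm:orbit_method}, and an irreducible smooth representation is determined by its character, it suffices to prove
\[
\tr(g, R_{\log}(\ell)) = \tr(g, \rho(\delta^*(\ell))) \qquad \text{for every } g \in \Gamma,\ \ell \in \fkt^*,
\]
with $\chi = \log_\Upupsilon^*(\ell) \in \widehat\Upupsilon$.

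Both sides can be computed explicitly. Proposition \ref{cor:traces} (applied to a Moy--Prasad approximation $Y_r$ through which $\chi$ factors) expresses the left side as a $\chi$-weighted count of the geometric fixed-point sets $S_{g,t} \subseteq Y_r(\obF)$, while Kirillov's formula (Theorem \ref{thm:orbit_method}) expresses the right side as $\#\Omega^{-1/2} \sum_{f \in \Omega} f(\log g)$, where $\Omega = \Gamma \cdot \delta^*(\ell) \subseteq \fkg^*$. A preliminary check to carry out first is that the two normalizations agree: Corollary \ref{cor:dimension} identifies $\dim R_{\log}(\ell) = q^{s_\chi N/2}$, and independently one verifies that $\#\Omega = q^{s_\chi N}$ by identifying the coadjoint stabilizer of $\delta^*(\ell)$ in $\Gamma$ with the $F$-fixed points of the pro-unipotent radical of the twisted Levi $M_\chi$ attached to $\chi$, whose index can be read off from Corollary \ref{cor:formal_degree}.

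Once the normalizations match, the remaining task is the combinatorial identity
\[
(-1)^{s_\chi} \sum_{t \in \Upupsilon} \chi(t) \cdot \#S_{g,t} \;=\; \#\Upupsilon \cdot \sum_{f \in \Omega} f(\log g).
\]
My strategy would be to induct on the depth of the Howe factorization of $\chi$: at each step I would cut off the outermost layer using the reduction along $M_\chi$ provided by Proposition \ref{key}, and on the orbit side perform the parallel reduction of $\Omega$ to the coadjoint orbit in the Lie algebra of $M_\chi$, so that the inductive hypothesis applies to a strictly smaller root system. The Steinberg cross-section (Proposition \ref{St}) furnishes an affine parametrization of $Y_r$ which transports the fixed-point loci $S_{g,t}$ into polynomial subsets of $\fkg$, after which a Fourier-theoretic manipulation using $\chi$ should convert the $t$-weighted count into the orbit sum on the right. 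The base case (trivial $\chi$) is immediate.

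The hard part, and the reason this statement remains conjectural in general, is reconciling the multiplicative Baker--Campbell--Hausdorff group law on $\Gamma = \exp \fkg$ with the purely linear structure of $\fkg^*$ that appears in Kirillov's formula: the defining equation $g F^N(x) = x t$ of $S_{g,t}$ is nonlinear in $x$, whereas the pairing $f(\log g)$ is linear in $\log g$. Controlling the commutator corrections as one moves layer-by-layer through the Moy--Prasad filtration requires delicate bookkeeping, and even the $\GL_2$-case verified in the paper already involves a nontrivial explicit calculation; pushing the induction through in full generality, and in particular ensuring that $\delta^*$ intertwines correctly with the successive Howe-factorization cutoffs on both sides, is the principal obstacle.
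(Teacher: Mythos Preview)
The statement is a \emph{conjecture}; the paper does not prove it in general. What the paper does (in \S\ref{sec:example_orbit}) is verify it in the single case $G=\GL_2$, $r=3$, $p>2$, by brute-force computation: Lemma~\ref{lm:geom_traces} evaluates $\tr(g,H_{s_\chi}(Y_3,\cool)[\chi])$ via the fixed-point formula of Proposition~\ref{cor:traces}, the Kirillov character $\tr(g,\rho_{\Omega_\varepsilon})$ is computed directly from the explicit description of the orbit $\Omega_\varepsilon$, and the two expressions are matched by an elementary change of variables. No inductive or structural argument is attempted.

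Your proposal is not a proof but a strategy for the general case, and you correctly flag it as such in your final paragraph. The overall shape---reduce to a character identity, evaluate the geometric side via Proposition~\ref{cor:traces} and the orbit side via Theorem~\ref{thm:orbit_method}, then induct on the Howe factorization using the Levi reduction of Proposition~\ref{key}---is a reasonable heuristic and goes well beyond anything the paper attempts. But several of the steps you list as ``checks'' or ``manipulations'' are themselves open problems. The identification $\#\Omega = q^{s_\chi N}$ via the coadjoint stabilizer being $(\BM_\chi^+)^F$ is not established anywhere and is not obvious: the stabilizer of $\delta^\ast(\ell)$ depends on the full Howe stratification of $\chi$, not just on the outermost Levi. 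The ``Fourier-theoretic manipulation'' converting the $t$-weighted fixed-point count into the orbit sum is entirely unspecified; in the paper's $\GL_2$ example this is a bare-hands substitution $\mu\mapsto\lambda$ with no visible structural generalization. And the parallel reduction you propose on the orbit side---passing from $\Omega\subseteq\fkg^\ast$ to a coadjoint orbit for $M_\chi$---has no a priori reason to match the cohomological reduction of Proposition~\ref{key}, which goes through the auxiliary isomorphisms $\psi_i$ and nontrivial degree shifts.

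In short: the paper proves strictly less than you are sketching, and your sketch, while a sensible plan of attack, does not close the gap. The conjecture remains open beyond the one verified example.
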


With other words, if $\chi \in \widehat{\Upupsilon}$ is a character, then Conjecture \ref{conj:relation_to_orbit_method} predicts that $H_{s_\chi}(Y_r,\cool)[\chi] \cong \rho_{\Omega}$, where $\Omega \in \fkg^\ast/\Ad\, \Gamma$ is the orbit of $\delta^\ast(\chi \circ \exp_\Upupsilon) = \chi \circ \exp_\Upupsilon \circ \delta \in \fkg^\ast$. Note that to be able to state the conjecture we need (only) Theorem \ref{thm:irred} , but to verify it in a special case in \S\ref{sec:example_orbit} we use Theorem \ref{main}.

\begin{remark}
(1) Combined with \cite[Theorems 2.9 and 2.11]{BoyarchenkoD_10}, Conjecture \ref{conj:relation_to_orbit_method} allows a realization of $H_{s_\chi}(Y_r,\cool)[\chi]$ as an induced representation (at least in the case when $\Gamma$ is finite).
\smallskip

\noindent (2) In the light of Examples \ref{ex:when_orbit_method_applies_profinite} and \ref{ex:when_orbit_method_applies_finite}, Conjecture \ref{conj:relation_to_orbit_method} says that $\chi \mapsto H_{s_\chi}(Y_r,\cool)[\chi]$ is a generalization of the orbit method (for those adjoint orbits containing an element of $\fkt$) to all groups of the form $\Gamma = (\BG_r^+)^F$. The collection of all such groups is neither contained in, nor containing the family of groups for which the orbit method applies.
\end{remark}

\subsection{An example.}\label{sec:example_orbit} Assume that ${\rm char}(k) = p > 2$, let $G = \GL_2$ and $r=3$. Let $\CG$ be the standard model of $G$ over $\CO_k$. We verify Conjecture \ref{conj:relation_to_orbit_method} in this case, that is for the group 
\[
\Gamma = 1 + \varpi M_2(\BF_q[\![\varpi]\!])/1+ \varpi^3 M_2(\BF_q[\![\varpi]\!]),
\]
where $M_2$ denotes the $2\times 2$-matrices. ($\Gamma$ is of nilpotency class $2<p$, hence the orbit method applies.) 

\smallskip

Write $\overline R := \obF[\varpi]/(\varpi^2)$ with Frobenius $\sigma(a + \varpi b) = a^q + \varpi b^q$ and let $R := \overline R^\sigma$ and $R_2 := \overline R^{\sigma^2}$. Write
\[
x(g_1,g_3) := 1 + \varpi \left(\begin{smallmatrix}
  g_1 & \sigma(g_3) \\ g_3 & \sigma(g_1) \end{smallmatrix}\right) \in 1 + \varpi M_2(\overline R) \cong \frac{1 + \varpi M_2(\obF[\![\varpi]\!])}{1+ \varpi^3 M_2(\obF[\![\varpi]\!])}
\]
with $g_i = g_{i0} + \varpi g_{i1} \in \overline R$ for $i=1,3$. 
Let $F = \Ad \begin{psmallmatrix} 0 & 1\\ 1 & 0\end{psmallmatrix} \circ \sigma$ be the twisted Frobenius on $1 + \varpi M_2(\overline R)$, such that the diagonal torus in $\Gamma$ becomes the unramified elliptic torus. We get a presentation of $\Gamma$ as
\[
\Gamma \cong \left(1 + \varpi M_2(\overline R)\right)^F = \left\{ x(g_1,g_3) \colon g_1,g_3 \in R_2 \text{ for $i=1,3$} \right\},
\]
which will be in use until the end of \S\ref{sec:example_orbit}. Then $\Upupsilon = \{g_3 = 0\} \subseteq \Gamma$ and the corresponding deep level Deligne--Lusztig space $Y_3$ is given by
\[
Y_3 = \left\{ x(v_1,v_3) \in 1 + \varpi M_2(\overline R) \colon \det x(v_1,v_3) \in R ^\times \right\}
\]
The condition $\det x(v_1,v_3) \in R ^\times$ is equivalent to the conditions $v_{10} \in \BF_{q^2}$ and $v_{11}^{q^2} - v_{11} = v_{30}^{q^2+q} - v_{30}^{q+1}$. Next we describe how $\Gamma$ and $\Upupsilon$ act on a point $x(v_1,v_3) \in Y_3$. Let $t = x(\tau,0) \in \Upupsilon$ with $\tau = \tau_0 + \varpi \tau_1 \in R_2$. Then for we have 
\[ 
x(v_1,v_3).t = x(v_{10} + \tau_0 + \varpi(v_{11} + \tau_1 + v_{10}\tau_0),v_{30} + \varpi (v_{31} + v_{30}\tau_0)).
\]
Let $g = g(g_1,g_3) \in \Gamma$. Then 
\begin{align*}
g.x(v_1,v_3) = x(v_{10} + g_{10} + &\varpi(v_{11} + g_{11} + g_{10}v_{10} + g_{30}^q v_{30}), \\ &v_{30} + g_{30} + \varpi(v_{31} + g_{31} + g_{30}v_{10} + g_{10}^q v_{30})).
\end{align*}

\begin{lemma}\label{lm:geom_traces}
There exists a constant $C \in \BQ^\times$ such that for all $g = x(g_1,g_3) \in \Gamma$ one has
\[
\tr(g, H_{s_\chi}(Y_3,\cool)[\chi]) = \begin{cases} Cq\cdot \chi(x(g_1, 0)) & \text{if $g_{30} = 0$}, \\
C \cdot\sum\limits_{\lambda \in \BF_{q^2} \colon \lambda^{q}+\lambda = g_{30}^{q+1}} \chi(x(g_{10} + \varpi (g_{11} - \lambda), 0)) & \text{otherwise.}
\end{cases}
\]
\end{lemma}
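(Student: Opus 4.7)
First, I would apply Proposition \ref{cor:traces} with $r = 3$ and $N = 2$ (using $F^2 = \sigma^2$, since $\begin{psmallmatrix} 0 & 1 \\ 1 & 0 \end{psmallmatrix}^2 = I$) to rewrite the trace as $\frac{1}{\#\Upupsilon \cdot q^{s_{\chi,3}}} \sum_{x} \chi(x^{-1} g F^2(x))$, the sum ranging over $x = x(v_1, v_3) \in Y_3(\obF)$ with $x^{-1} g F^2(x) \in \Upupsilon$. The plan is then to characterize these admissible $x$ using the left/right action formulas for $\Gamma$ and $\Upupsilon$ on $Y_3$ recalled just before the lemma, applied with $F^2(x) = x(\sigma^2(v_1), \sigma^2(v_3))$, and to evaluate $\chi$ of the resulting element in closed form.

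The heart of the calculation is matching the two sides of $g F^2(x) = xt$ component by component. The constant parts of the $g_1$- and $g_3$-slots, together with the $Y_3$-condition $v_{10} \in \BF_{q^2}$, force $\tau_0 = g_{10}$ and the twisted Artin--Schreier equation $v_{30}^{q^2} = v_{30} - g_{30}$, whose solution set $V \subseteq \obF$ is a coset of $\BF_{q^2}$ of size $q^2$. Matching the $\varpi$-part of the $g_1$-slot, combining with the $Y_3$-relation $v_{11}^{q^2} - v_{11} = v_{30}^{q^2+q} - v_{30}^{q+1}$, and substituting $v_{30}^{q^2+q} = v_{30}^{q+1} - g_{30} v_{30}^q$ (a consequence of the twisted Artin--Schreier equation) then produces the key identity
\[
\tau_1 = g_{11} - g_{30}^{q+1} + g_{30}^q v_{30} - g_{30} v_{30}^q,
\]
and applying $\sigma^2$ to the right-hand side, together with $g_{30}^{q^2} = g_{30}$, $v_{30}^{q^2} = v_{30} - g_{30}$ and $v_{30}^{q^3} = v_{30}^q - g_{30}^q$, shows $\tau_1^{q^2} = \tau_1$, so $\tau_1 \in \BF_{q^2}$ holds automatically. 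Hence the condition $x^{-1} g F^2(x) \in \Upupsilon$ reduces to $v_{30} \in V$. Finally, the $\varpi$-part of the $g_3$-slot yields an Artin--Schreier equation determining $v_{31}$ ($q^2$ solutions), and together with the free choice of $v_{10} \in \BF_{q^2}$ and the $q^2$ solutions of the $Y_3$-Artin--Schreier equation for $v_{11}$, each $v_{30} \in V$ yields $q^6$ admissible $x$'s, all producing the same $t \in \Upupsilon$.

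Setting $\lambda(v_{30}) := g_{30}^{q+1} + g_{30} v_{30}^q - g_{30}^q v_{30}$ so that $\tau_1 = g_{11} - \lambda$, a direct computation using $v_{30}^{q^2} = v_{30} - g_{30}$ yields $\lambda^q + \lambda = g_{30}^{q+1}$, hence $\lambda \in \Lambda := \{\lambda \in \BF_{q^2} : \lambda^q + \lambda = g_{30}^{q+1}\}$, a set of cardinality $q$. For $\mu \in \BF_{q^2}$ one has $\lambda(v_{30} + \mu) - \lambda(v_{30}) = g_{30} \mu^q - g_{30}^q \mu$, which vanishes identically when $g_{30} = 0$ (so $\lambda \equiv 0$ and $t = x(g_1, 0)$ for every $v_{30} \in V$), and vanishes exactly on the $\BF_q$-line $g_{30} \BF_q \subseteq \BF_{q^2}$ when $g_{30} \neq 0$, making $V \to \Lambda$ a $q$-to-$1$ surjection. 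Assembling, $\sum_t \chi(t) \#S_{g,t}$ equals $q^8 \chi(x(g_1, 0))$ when $g_{30} = 0$ and $q^7 \sum_{\lambda \in \Lambda} \chi(x(g_{10} + \varpi(g_{11} - \lambda), 0))$ otherwise; dividing by $\#\Upupsilon \cdot q^{s_{\chi,3}} = q^{4 + s_{\chi,3}}$ and using $\dim Y_3 = 2$ (Lemma \ref{lm:properties_of_Yr}, since $\#\Delta = \#\Delta^\red = 1$ for $\GL_2$ at a hyperspecial point), whence $s_{\chi,3} = 4 - s_\chi$, yields the lemma with $C = q^{s_\chi - 1} \in \BQ^\times$. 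The delicate step is the automatic $\BF_{q^2}$-rationality of $\tau_1$ together with the identity $\lambda^q + \lambda = g_{30}^{q+1}$, which together encode why the admissible $v_{30}$'s couple precisely to the Artin--Schreier set $\Lambda$ in the elliptic ($g_{30} \neq 0$) case.
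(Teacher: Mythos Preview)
Your proof is correct and follows essentially the same route as the paper's: both invoke the Boyarchenko trace formula (you via Proposition~\ref{cor:traces}, the paper directly via Theorem~\ref{main} and \cite[Lemma~2.12]{Boyarchenko_12}), set up the fixed-point equation $gF^2(x)=xt$, match components to obtain $\tau_0=g_{10}$, $v_{30}^{q^2}=v_{30}-g_{30}$ and $\tau_1=g_{11}-g_{30}^{q+1}+g_{30}^q v_{30}-g_{30}v_{30}^q$, and then verify the key identity $\lambda^q+\lambda=g_{30}^{q+1}$ for $\lambda=g_{11}-\tau_1$. The only cosmetic difference is the direction of the parametrization: you fix $v_{30}\in V$ and read off $\tau_1$ (showing $V\to\Lambda$ is $q$-to-$1$ when $g_{30}\neq 0$), whereas the paper fixes $\tau_1$ satisfying~\eqref{eq:aux_eq} and counts the $q$ compatible values of $v_{30}$; your explicit identification $C=q^{s_\chi-1}$ is a small bonus not recorded in the paper.
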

\begin{proof}
$Y_3$ is defined over $\BF_{q^2}$. Combining Theorem \ref{main} with \cite[Lemma 2.12]{Boyarchenko_12}, we see that there is some $C_1 \in \BQ^\times$, such that for any $g = g(g_1,g_3) \in G$, 
\[
\tr(g,|R_\chi|) = C_1 \cdot \sum_{t \in T} \#S_{g,t} \cdot \chi(t),
\]
where
\[
S_{g,t} = \{x \in X \colon g.F^2(x) = x.t \}.
\]
Write $t = x(\tau,0)$. Using the above description of the actions on $X$, one easily sees that $S_{g,t} = \varnothing$ unless $g_{10} = \tau_0$. Assume that $g_{10} = \tau_0$ holds. Using the determinant condition above, one easily deduces that $\#S_{g,t} = q^6 \cdot \#S_{g,t}'$, where
\[
S_{g,t}' = \{v_{30} \in \BF_{q^2} \colon v_{30} - v_{30}^{q^2} = g_{30}\text{ and } g_{11} - \tau_1 - g_{30}^{q+1}= v_{30}^q g_{30} - v_{30}g_{30}^q \}
\]
If $g_{30} = 0$, the claim of the lemma becomes clear now. Assume $g_{30}\neq 0$. Suppose first that $\tau_1$ is such that $S_{g,t}' \neq \varnothing$. Then, if $v_{30} \in S_{g,t}'$ is arbitrary, writing $y := v_{30}^q g_{30} - v_{30}g_{30}^q$ we see (using that $v_{30}^{q^2} = v_{30}-g_{30}$) that $y^q = -y - g_{30}^{q+1}$. But on the other hand, $g_{11} - \tau_1 = y + g_{30}^{q+1}$, and hence we deduce (using that $g_{30} \in \BF_q$) that
\[
(g_{11} - \tau_1)^q + (g_{11} - \tau_1) = (y + g_{30}^{q+1})^q + (y + g_{30}^{q+1}) = y^q + y + 2g_{30}^{q+1} = g_{30}^{q+1}.
\]
With other words, $S_{g,t}' = \varnothing$, unless 
\begin{equation}\label{eq:aux_eq}
(g_{11} - \tau_1)^q + (g_{11} - \tau_1)  = g_{30}^{q+1}.
\end{equation}
Assume now that this equality holds. Note that $v_{30}^q g_{30} - v_{30}g_{30}^q = g_{11} - \tau_1 - g_{30}^{q+1}$, regarded as an equation in $v_{30}$, has precisely $q$ different solutions in $\obF$ (as $g_{30} \neq 0$). Moreover, if $v_{30}$ is one of its solutions, then (applying the transformation $X \mapsto X^q + X$ to both sides of this equation) one verifies using \eqref{eq:aux_eq} that $v_{30}$ also satisfies $v_{30}^{q^2} - v_{30} = -g_{30}$, that is $v_{30} \in S_{g,t}'$. Altogether, $\#S_{g,t}' = q$ if \eqref{eq:aux_eq} holds and $\#S_{g,t}' = 0$ otherwise. The lemma follows immediately from this by taking $\lambda := g_{11} - \tau_1$ for those $\tau_1$ which satisfy \eqref{eq:aux_eq}.
\end{proof}

Now we consider the orbit method side. Write  $y(g_1,g_3) := x(g_1,g_3) - 1 \in \varpi M_2(\overline R) = \fkg = {\rm Lie}\, \Gamma$. The map $\log \colon \Gamma \rar \fkg$ is given by $\log(1+\varpi z) = \varpi z - \frac{\varpi^2 z}{2}$.
Let 
\[ 
\delta \colon \fkg \rar \fkt, \quad y(g_1,g_3) \mapsto y(g_1, 0) \quad \text{ and let } \quad  \varepsilon := \chi \circ \exp_T \circ \delta \in \fkg^\ast.
\]
Consider first the $\Gamma$-orbit $\Omega_\delta$ of $\delta$ ($\Gamma$ acts on the first factor in $\Hom(\fkg,\fkt)$ by conjugation). First, note that the action of $\Gamma$ factors through $\Gamma = (\BG_3^+)^F \tar (\BG_2^+)^F = 1+ \varpi M_2(\BF_{q^2})$. Moreover, for $h = x(h_{10},h_{30})\in (\BG_2^+)^F$ we have
\begin{align*}
(\Ad h)(\delta)(y(g_1,g_3)) &= \delta(hy(g_1,g_3)h^{-1}) \\ &= y(g_{10} + \varpi(g_{11} + h_{10}^q g_{30} - h_{10} g_{30}^q),0) =: \delta_{h_{10}}(g).
\end{align*}
Thus, $\Omega_\delta = \{\delta_{h_{10}} \colon h_{10} \in \BF_{q^2}\}$ has cardinality $q^2$. As $\exp_\Upupsilon$ is an isomorphism, the $\Gamma$-orbit $\Omega_{\exp_\Upupsilon \circ\delta} = \exp_\Upupsilon \circ\Omega_\delta$ of $\exp_\Upupsilon \circ \delta \in \Hom(\Gamma,\fkt)$ has the same cardinality as $\Omega_\delta$. Let now $h_{10} \neq h_{10}' \in \BF_{q^2}$. An easy computation shows that $\chi\circ\exp_\Upupsilon \circ\delta_{h_{10}} = \chi\circ\exp_\Upupsilon\circ\delta_{h_{10}'}$ if and only if 
$\chi|_{1 + \varpi^2\BF_q^{-}}$ is trivial, where we set $\BF_q^- := \{x \in \BF_{q^2} \colon x+x^q = 0\}$. 

\smallskip

Suppose first that $\chi|_{1 + \varpi^2\BF_q^{-}}$ non-trivial. Then composition with $\chi\circ\exp_\Upupsilon$ induces a bijection $\Omega_\delta \stackrel{\sim}{\rar} \Omega_{\varepsilon}$. Unraveling the trace formula from Theorem \ref{thm:orbit_method} we then that for $g = x(g_1,g_3)$:
\begin{equation}\label{eq:trace_orbit}
\tr(g,\rho_{\Omega_{\varepsilon}}) = C_2 \cdot \sum_{\alpha \in \BF_{q^2}}\chi(x(g_{10} + \varpi(g_{11} - \frac{g_{30}^{q+1}}{2} + \alpha^q g_{30} - \alpha g_{30}^q))),
\end{equation}
for some constant $C_2 \in \BQ^\times$. If $g_{30} = 0$, this clearly agrees with the trace from Lemma \ref{lm:geom_traces} up to a (non-zero) scalar. Assume $g_{30} \neq 0$. Then the homomorphism $\alpha\mapsto \alpha^q g_{30} - \alpha g_{30}^q \colon \BF_{q^2} \rar \BF_{q^2}$ is easily seen to have image $\BF_q^-$. Thus, \eqref{eq:trace_orbit} transforms to
\[
\tr(g,\rho_{\Omega_{\varepsilon}}) = C_2 \cdot q \sum_{\mu \in \BF_q^-}\chi(x(g_{10}+\varpi(g_{11} - \frac{g_{30}^{q+1}}{2} + \mu)))
\]
Now it is immediate to check that the map $\mu \mapsto \lambda := \frac{g_{30}^{q+1}}{2} - \mu$ defines a bijection between $\BF_q^-$ and the set $\{\lambda \in \obF \colon \lambda^q + \lambda = g_{30}^{q+1}\}$. Thus the trace of $\rho_{\Omega_\varepsilon}$ agrees with the trace from Lemma \ref{lm:geom_traces} up to a non-zero scalar, which does not depend on $g$. As we know that $H_{s_\chi}(Y_3,\cool)[\chi]$ and $\rho_{\Omega_\varepsilon}$ are both irreducible $\Gamma$-representations, it follows that they must be isomorphic. 

\smallskip

In the case that $\chi|_{1 + \varpi^2\BF_q^{-}}$ is trivial, a similar (and easier) computation leads to the same conclusion. Altogether we have shown:

\begin{proposition}
For $\varepsilon = \chi\circ\exp_\Upupsilon\circ\delta$ we have $H_{s_\chi}(Y_3,\cool)[\chi] \cong \rho_{\Omega_{\varepsilon}}$ as $\Gamma$-representations. Thus, Conjecture \ref{conj:relation_to_orbit_method} holds for $\Gamma$. 
\end{proposition}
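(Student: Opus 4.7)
The strategy is to compare the characters of the two representations as class functions on $\Gamma$, and then use irreducibility to upgrade equality of characters to an isomorphism. For the left-hand side I would take the explicit trace formula of Lemma \ref{lm:geom_traces} as my description of the character of $H_{s_\chi}(Y_3,\cool)[\chi]$, keeping the natural split into the cases $g_{30}=0$ and $g_{30}\neq 0$.

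For the right-hand side, the first step is to determine the adjoint $\Gamma$-orbit $\Omega_\varepsilon\subseteq\fkg^\ast$ explicitly. Since the adjoint action factors through $\Gamma\twoheadrightarrow (\BG_2^+)^F$, a direct matrix conjugation shows that the orbit $\Omega_\delta$ of $\delta\in\Hom(\fkg,\fkt)$ is parametrized by $h_{10}\in\BF_{q^2}$, and one distinguishes two sub-cases depending on whether $\chi|_{1+\varpi^2\BF_q^{-}}$ is nontrivial (the generic case, in which $\Omega_\delta\to\Omega_\varepsilon$ is bijective of size $q^2$) or trivial (the degenerate case, where the orbit collapses and the analysis simplifies). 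Plugging $\Omega_\varepsilon$ into the trace formula of Theorem \ref{thm:orbit_method} produces an explicit sum over $\BF_{q^2}$.

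The core of the argument is then to match this orbit-method sum with the sum from Lemma \ref{lm:geom_traces}. For $g_{30}\neq 0$, I would reparametrize via the substitutions $\mu := \alpha^q g_{30}-\alpha g_{30}^q$ and then $\lambda := \tfrac{g_{30}^{q+1}}{2}-\mu$. The two combinatorial facts that drive the identification are that the map $\alpha\mapsto\alpha^q g_{30}-\alpha g_{30}^q$ surjects onto $\BF_q^{-}:=\{x\in\BF_{q^2}\colon x^q+x=0\}$ with constant fibre size $q$, and that the affine shift by $\tfrac{g_{30}^{q+1}}{2}$ carries $\BF_q^{-}$ bijectively onto $\{\lambda\in\BF_{q^2}\colon\lambda^q+\lambda=g_{30}^{q+1}\}$. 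This is the main technical step and the one place where the assumption $p$ odd enters essentially, through the division by $2$. The subsidiary cases $g_{30}=0$ and the degenerate character are direct comparisons.

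Once the two characters are shown to be proportional as functions of $g$, irreducibility of both sides --- from Theorem \ref{thm:irred} on the geometric side and from Theorem \ref{thm:orbit_method} on the orbit-method side --- combined with the evaluation at $g=1$ (which pins down the scalar), upgrades the proportionality to an actual isomorphism of $\Gamma$-representations. The hard part of the whole argument is really concentrated in the character matching: both the identification of the image of the Artin--Schreier-type map $\alpha\mapsto\alpha^q g_{30}-\alpha g_{30}^q$ and the book-keeping of the scalar factors coming from $\tfrac{1}{\#\Omega_\varepsilon^{1/2}}$ in Theorem \ref{thm:orbit_method} versus the geometric normalization in Lemma \ref{lm:geom_traces}.
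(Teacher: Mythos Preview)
Your proposal is correct and follows essentially the same approach as the paper's own proof: compute the orbit $\Omega_\varepsilon$ explicitly (using that the adjoint action factors through $(\BG_2^+)^F$), split according to whether $\chi|_{1+\varpi^2\BF_q^-}$ is trivial, and in the generic case match the orbit-method trace with Lemma~\ref{lm:geom_traces} via the substitutions $\mu=\alpha^q g_{30}-\alpha g_{30}^q$ and $\lambda=\tfrac{g_{30}^{q+1}}{2}-\mu$, then conclude from irreducibility. The two combinatorial facts you isolate about the image $\BF_q^-$ and the affine shift are exactly the ones the paper uses.
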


\appendix

\section{Algorithm for the Steinberg cross-section}\label{sec:appendix_sage}

The algorithm used in the proof of Proposition \ref{St} consists of two procedures (implemented in SAGE, v8.6), which we now describe.

\smallskip

\noindent \emph{find\_candidate\_for\_one\_step} (procedure 1):

\noindent \textbf{Input}: an element $w \in W$, a set $\Psi \subsetneq \Phi^+$ of positive roots

\noindent \textbf{Output}: a (non-empty) set of positive roots or \emph{False}.
\begin{itemize}
\item[1.] Compute the set $\Phi_w = \{\alpha \in \Phi^+ \colon w\sigma(\alpha) < 0\}$.
\item[2.] Set $\Phi^+ \sm \Psi = \{\beta_1,\dots ,\beta_s\}$ with $s \geq 1$.
\item[3.] For $i$ running through $1, 2, \dots, s$ do:
\begin{itemize}
\item[3.1.] Set $\Psi_1^{(i)} = \Psi \cup \{\beta_i\}$ and $\Psi_2^{(i)} = \Psi_1^{(i)} \sm \Phi_w$.
\item[3.2.] Check whether the following conditions hold: (a) $\Psi_1^{(i)}$ and $\Psi_2^{(i)}$ are closed under addition; (b) for all $\alpha,\beta \in \Psi_1^{(i)}$ such that $\alpha+\beta \in\Phi^+$, one has $\alpha + \beta \in \Psi_2^{(i)}$; (c) $w\sigma(\Psi_2^{(i)}) \subseteq \Psi_1^{(i)}$.
\item[3.3] If (a)-(c) hold, return $\Psi_1^{(i)}$ and stop. Otherwise continue with the next $i$.
\end{itemize}
\item[4.] Return \emph{False}.
\end{itemize}

\noindent \emph{iterate\_steps} (procedure 2):

\noindent \textbf{Input}: an element $w \in W$, and $\Psi$, which is either a subset of $\Phi^+$ or \emph{False}.

\noindent \textbf{Output}: a (non-empty) set of positive roots or \emph{False} or \emph{True}.

\begin{itemize}
\item[1.] Compute the set $\Phi_w := \{\alpha \in \Phi^+ \colon w\sigma(\alpha) < 0\}$.
\item[2.] If $\Phi_w = \Phi^+$, return \emph{True} and stop.
\item[3.] If \emph{find\_candidate\_for\_one\_step}$(w, \Psi) =$ \emph{False}, return \emph{False} and stop.
\item[4.] If $\Psi = \Phi^+$, return \emph{True} and stop.
\item[5.] Set $\Psi' := $\emph{find\_candidate\_for\_one\_step}$(w,\Psi)$. Return \emph{iterate\_steps}$(w,\Psi')$.
\end{itemize}

\smallskip

To check if Lemma \cite[Lemma 5.7]{Ivanov_Cox_orbits} holds for an element $w \in W$, one runs the (recursive) procedure \emph{iterate\_steps} with arguments $w$ and $\Phi_w = \{\alpha \in \Phi^+ \colon w\sigma(\alpha) < 0\}$. The recursion stops after finitely many steps. If the final output is \emph{True}, the lemma holds. This holds true if $w$ is twisted Coxeter.

\begin{remark} Note that the final output \emph{True} of \emph{iterate\_steps}$(w,\Phi_w)$ is a sufficient but not a necessary condition for Lemma \cite[Lemma 5.7]{Ivanov_Cox_orbits} to hold for $w \in W$. In fact, there are (non-Coxeter) elements $w \in W$ for which \cite[Lemma 5.7]{Ivanov_Cox_orbits} holds true, but \emph{iterate\_steps}$(w,\Phi_w)$ outputs \emph{False}.
\end{remark}

\bibliography{bib_ADLV}{}
\bibliographystyle{amsalpha}

\end{document}